\documentclass{article}
\usepackage{amssymb}
\usepackage{amsmath}
\usepackage{amsfonts}

\newtheorem{theorem}{Theorem}[section]

\newtheorem{condition}{Condition}[section]
\newtheorem{conjecture}{Conjecture}[section]
\newtheorem{corollary}{Corollary}[section]
\newtheorem{criterion}{Criterion}[section]
\newtheorem{definition}{Definition}[section]
\newtheorem{example}{Example}[section]

\newtheorem{proposition}{Proposition}[section]
\newtheorem{remark}{Remark}[section]

\newenvironment{proof}[1][Proof]{\noindent\textbf{#1.} }{\ \rule{0.5em}{0.5em}}
\renewcommand{\theequation}{\thesection.\arabic{equation}}
\input{tcilatex}
\begin{document}

\title{Categories and functors of universal algebraic geometry. Automorphic
equivalence of algebras.}
\author{A. Tsurkov, \\
Department of Mathematics, Federal University of Rio Grande\\
do Norte, Brazil, arkadytsurkov.2021@gmail.com}
\maketitle

\begin{abstract}
Universal algebraic geometry allows considering of geometric properties of
every universal algebra. When two algebras have same algebraic geometry? We
must consider the categories of algebraic closed sets of these algebras to
answer this question. The complete coincidence of these categories gives us
a concept of the geometric equivalence of algebras. Some type of
isomorphisms of these categories gives us a concept of the automorphic
equivalence of algebras. This concept has been considered since article of
B. Plotkin \cite{PlotkinSame} in 2003. We will give by language of category
theory one more elegant definition of this concept and recall some theorems
related to this concept.

\textit{Keywords: }Universal algebra, category theory, universal algebraic
geometry, automorphic equivalence of algebras.
\end{abstract}

\section{Introduction\label{intr}}

We can consider the universal algebraic geometry over the arbitrary variety
of algebras $\Theta $. We consider in this paper, for simplicity, only
varieties of one-sorted algebras.

We take an infinite countable set $X_{0}$. The free algebra $F\left(
X\right) $ with the set $X$ of free generators exists in our variety $\Theta 
$ for every subset $X\subset X_{0}$. We consider only free algebras $F\left(
X\right) $ with finite subset $X\subset X_{0}$. Theses algebras are objects
of the \textit{category }$\Theta ^{0}$\textit{\ of finitely generated free
algebras of the variety} $\Theta $, the homomorphisms of these algebras will
be morphisms of this category.

A pair $\left( f_{1},f_{2}\right) \in F\left( X\right) \times F\left(
X\right) $ for every $F\left( X\right) \in \mathrm{Ob}\Theta ^{0}$ can be
considered as an equation in our variety $\Theta $. The subset $T\subseteq
F\left( X\right) \times F\left( X\right) $ can be considered as a system of
equations. The elements of free algebra $F\left( X\right) $ are words of our
language, defined by signature of our variety and it's identities. So, when
we consider equations and their systems, we are dealing with the syntax of
our language.

We consider an arbitrary algebra $H\in \Theta $. The homomorphism\linebreak $%
\varphi \in \mathrm{Hom}\left( F\left( X\right) ,H\right) $, where $F\left(
X\right) \in \mathrm{Ob}\Theta ^{0}$ and $\left( f_{1},f_{2}\right) \in
F\left( X\right) \times F\left( X\right) $, can be consider as solution of
the equation $\left( f_{1},f_{2}\right) $ if $\varphi \left( f_{1}\right)
=\varphi \left( f_{2}\right) $ or, in other words, $\left(
f_{1},f_{2}\right) \in \ker \varphi $. The set $\left\{ \varphi \in \mathrm{%
Hom}\left( F\left( X\right) ,H\right) \mid T\subseteq \ker \varphi \right\} $
of all solutions of the system of equations $T\subseteq F\times F$ in the
algebra $H$ we denote by $T_{H}^{\prime }=\left\{ \varphi \in \mathrm{Hom}%
\left( F\left( X\right) ,H\right) \mid T\subseteq \ker \varphi \right\} $.
Solutions of systems of equations in some specific algebra $H$ are depend
from semantic of this algebra.

The set $\mathrm{Hom}\left( F\left( X\right) ,H\right) $ plays here the same
role that affine space $k^{n}$, where $k$ is a field, plays in classical
algebraic geometry. Every $\varphi \in \mathrm{Hom}\left( F\left( X\right)
,H\right) $ can be considered as a point of our affine space. We can
consider for every set of points $R\subseteq \mathrm{Hom}\left( F\left(
X\right) ,H\right) $ the equation $\left( f_{1},f_{2}\right) \in F\left(
X\right) \times F\left( X\right) $, which has as solutions in the algebra $H$
all points of $R$. It is clear, that in this case $\left( f_{1},f_{2}\right)
\in \bigcap\limits_{\varphi \in R}\ker \varphi $ holds. Therefore $%
\bigcap\limits_{\varphi \in R}\ker \varphi \subseteq F\left( X\right) \times
F\left( X\right) $, which we denote as $R_{H}^{\prime }$ is the maximal
system of equations, which has as solutions in the algebra $H$ all points of 
$R$. It is clear that $R_{H}^{\prime }$ is a congruence in $F\left( X\right) 
$.

Finally, for every system of equations $T\subseteq F\left( X\right) \times
F\left( X\right) $ we can consider an \textit{algebraic close} of this
system:%
\begin{equation}
\left( T_{H}^{\prime }\right) _{H}^{\prime }=T_{H}^{\prime \prime
}=\bigcap\limits_{\substack{ \varphi \in \mathrm{Hom}\left( F\left( X\right)
,H\right) ,  \\ T\subseteq \ker \varphi }}\ker \varphi \subseteq F\left(
X\right) \times F\left( X\right) .  \label{cl_def}
\end{equation}%
This is the maximal system of equations, which has in the algebra $H$ the
same solutions as system $T$.

\begin{definition}
We say that set (system of equations) $T\subseteq F\left( X\right) \times
F\left( X\right) $ is \textbf{algebraic closed}\textit{\ in }the algebra $H$
($H$\textbf{-closed}) if $T=T_{H}^{\prime \prime }$.
\end{definition}

Every algebraic closed set $T\subseteq F\left( X\right) \times F\left(
X\right) $ is a congruence in $F\left( X\right) $. The family of all $H$%
-closed sets in $F\left( X\right) \times F\left( X\right) $ we denote by $%
Cl_{H}\left( F\left( X\right) \right) $.

\section{Auxiliary facts from set theory and universal algebra. Some
notation.}

\setcounter{equation}{0}

In this paper we will work a lot with elements of Cartesian squares. So, we
will use the following short notations. If $A$ and $B$ are two sets and $%
\varphi :A\rightarrow B$ is a mapping, then we will denote $\varphi \left(
a_{1},a_{2}\right) =\left( \varphi \left( a_{1}\right) ,\varphi \left(
a_{2}\right) \right) \in B\times B$ for $\left( a_{1},a_{2}\right) \in
A\times A$. If $R\subseteq A\times A$, we denote $\varphi \left( R\right)
=\left\{ \varphi \left( r\right) \mid r\in R\right\} $. Also we denote for
every set $A$ the subset $\left\{ \left( a,a\right) \mid a\in A\right\}
\subset A\times A$ by $\Delta _{A}$.

We denote by $\left\vert A\right\vert $ the cardinality of the set $A$. We
denote in the same way the quantity of elements of a finite set.

\begin{remark}
\label{epi}If $\varphi _{1},\varphi _{2}:B\rightarrow C$ are two mappings
and $\tau :A\rightarrow B$ is a surjective mapping and $\varphi _{1}\tau
=\varphi _{2}\tau $ holds, then $\varphi _{1}=\varphi _{2}$.
\end{remark}

This is a well-known and trivial fact from set theory.

\begin{definition}
We say that a functor $\mathcal{F}:\mathfrak{A}\rightarrow \mathfrak{B}$
from a category $\mathfrak{A}$ to a category $\mathfrak{B}$ is a \textbf{%
surjective functor} if for every $B\in \mathrm{Ob}\mathfrak{B}$ there exists 
$A\in \mathrm{Ob}\mathfrak{A}$ such that $B=\mathcal{F}\left( A\right) $ and
for every $\beta \in \mathrm{Mor}_{\mathfrak{B}}\left( B_{1},B_{2}\right) $,
where $B_{1},B_{2}\in \mathrm{Ob}\mathfrak{B}$, there exists $\alpha \in 
\mathrm{Mor}_{\mathfrak{A}}\left( A_{1},A_{2}\right) $, such that $B_{1}=%
\mathcal{F}\left( A_{1}\right) $, $B_{2}=\mathcal{F}\left( A_{2}\right) $, $%
\beta =\mathcal{F}\left( \alpha \right) $.
\end{definition}

\begin{remark}
\label{epi_f}We have, as in Remark \ref{epi} that if $\mathcal{F}_{1},%
\mathcal{F}_{2}:\mathcal{B}\rightarrow \mathcal{C}$ are two functors and $%
\mathcal{T}:\mathcal{A}\rightarrow \mathcal{B}$ is a surjective functor and $%
\mathcal{F}_{1}\mathcal{T}=\mathcal{F}_{2}\mathcal{T}$ holds, then $\mathcal{%
F}_{1}=\mathcal{F}_{2}$.
\end{remark}

This remark is proved by considerations of corresponding mappings of objects
and functors.

Also in this paper we will work a lot with commutative diagrams of a
specific kind. We consider two algebras $A,B\in \Theta $ of the same variety 
$\Theta $, two congruences $T_{A}\subseteq A\times A$, $T_{B}\subseteq
B\times B$ and the diagram%
\begin{equation}
\begin{array}{ccc}
A & \overset{\tau _{A}}{\rightarrow } & A/T_{A} \\ 
\mu \downarrow &  & \downarrow \varphi \\ 
B & \underset{\tau _{B}}{\rightarrow } & B/T_{B}%
\end{array}%
,  \label{squear}
\end{equation}%
where $\tau _{A},\tau _{B}$\textit{\ }are natural epimorphisms. The set of
all homomorphisms $\mu $, which close the left column of this diagram
commutative for the fixed homomorphism $\varphi $, we will denote by $\left( 
\begin{array}{c}
\left( A,T_{A}\right) \\ 
\left( B,T_{B}\right)%
\end{array}%
,\varphi \right) $. The set of all homomorphisms $\varphi $, which close the
right column of this diagram commutative for the fixed homomorphism $\mu $,
we will denote $\left( \mu ,%
\begin{array}{c}
\left( A,T_{A}\right) \\ 
\left( B,T_{B}\right)%
\end{array}%
\right) $. If the set $\left( 
\begin{array}{c}
\left( A,T_{A}\right) \\ 
\left( B,T_{B}\right)%
\end{array}%
,\varphi \right) $ contains only one element $\mu $, we will denote $\mu
=\left( 
\begin{array}{c}
\left( A,T_{A}\right) \\ 
\left( B,T_{B}\right)%
\end{array}%
,\varphi \right) $. If the set $\left( \mu ,%
\begin{array}{c}
\left( A,T_{A}\right) \\ 
\left( B,T_{B}\right)%
\end{array}%
\right) $ contains only one element $\varphi $, we will denote $\varphi
=\left( \mu ,%
\begin{array}{c}
\left( A,T_{A}\right) \\ 
\left( B,T_{B}\right)%
\end{array}%
\right) $.

\begin{remark}
\label{nn_sym}If $\mu \in \left( 
\begin{array}{c}
\left( A,T_{A}\right) \\ 
\left( B,T_{B}\right)%
\end{array}%
,\varphi \right) $, then $\varphi \in \left( \mu ,%
\begin{array}{c}
\left( A,T_{A}\right) \\ 
\left( B,T_{B}\right)%
\end{array}%
\right) $ and, vice versa, if $\varphi \in \left( \mu ,%
\begin{array}{c}
\left( A,T_{A}\right) \\ 
\left( B,T_{B}\right)%
\end{array}%
\right) $, then $\mu \in \left( 
\begin{array}{c}
\left( A,T_{A}\right) \\ 
\left( B,T_{B}\right)%
\end{array}%
,\varphi \right) $.
\end{remark}

\begin{remark}
\label{nn_trans}We consider three algebras $A,B,C\in \Theta $ and three
congruences $T_{A}\subseteq A\times A$, $T_{B}\subseteq B\times B$, $%
T_{C}\subseteq C\times C$. If $\mu _{A}:A\rightarrow B$, $\mu
_{B}:B\rightarrow C$ are two homomorphisms and $\varphi _{A}\in \left( \mu
_{A},%
\begin{array}{c}
\left( A,T_{A}\right) \\ 
\left( B,T_{B}\right)%
\end{array}%
\right) $, $\varphi _{B}\in \left( \mu _{B},%
\begin{array}{c}
\left( B,T_{B}\right) \\ 
\left( C,T_{C}\right)%
\end{array}%
\right) $, then $\varphi _{B}\varphi _{A}\in \left( \mu _{B}\mu _{A},%
\begin{array}{c}
\left( A,T_{A}\right) \\ 
\left( C,T_{C}\right)%
\end{array}%
\right) $. If $\varphi _{A}:A/T_{A}\rightarrow B/T_{B}$, $\varphi
_{B}:B/T_{B}\rightarrow C/T_{C}$ are two homomorphisms and $\mu _{A}\in
\left( 
\begin{array}{c}
\left( A,T_{A}\right) \\ 
\left( B,T_{B}\right)%
\end{array}%
,\varphi _{A}\right) $, $\mu _{B}\in \left( 
\begin{array}{c}
\left( B,T_{B}\right) \\ 
\left( C,T_{C}\right)%
\end{array}%
,\varphi _{B}\right) $, then $\mu _{B}\mu _{A}\in \left( 
\begin{array}{c}
\left( A,T_{A}\right) \\ 
\left( C,T_{C}\right)%
\end{array}%
,\varphi _{B}\varphi _{A}\right) $.
\end{remark}

\begin{proposition}
\label{squear_prop}We consider again two algebras $A,B\in \Theta $ of the
same variety $\Theta $ and two congruences $T_{A}\subseteq A\times A$, $%
T_{B}\subseteq B\times B$.
\end{proposition}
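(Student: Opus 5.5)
Since the statement above only fixes the data $A,B,T_{A},T_{B}$, I will take the proposition to consist of the facts the surrounding notation is built for. First: for a homomorphism $\mu :A\rightarrow B$, the set $\left( \mu ,\begin{array}{c}\left( A,T_{A}\right) \\ \left( B,T_{B}\right)\end{array}\right) $ is nonempty if and only if $\mu \left( T_{A}\right) \subseteq T_{B}$. Second: in that case the set has exactly one element. Third: if $A$ is free, for instance $A=F\left( X\right) \in \mathrm{Ob}\Theta ^{0}$, then for every homomorphism $\varphi :A/T_{A}\rightarrow B/T_{B}$ the set $\left( \begin{array}{c}\left( A,T_{A}\right) \\ \left( B,T_{B}\right)\end{array},\varphi \right) $ is nonempty. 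I would prove these in this order.

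\emph{Necessity.} Suppose $\varphi \tau _{A}=\tau _{B}\mu $ and $\left( a_{1},a_{2}\right) \in T_{A}$. Then
$$\tau _{B}\mu \left( a_{1}\right) =\varphi \tau _{A}\left( a_{1}\right) =\varphi \tau _{A}\left( a_{2}\right) =\tau _{B}\mu \left( a_{2}\right) .$$
Hence $\mu \left( a_{1},a_{2}\right) \in \ker \tau _{B}=T_{B}$.

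\emph{Sufficiency.} Suppose $\mu \left( T_{A}\right) \subseteq T_{B}$. Then $T_{A}\subseteq \ker \left( \tau _{B}\mu \right) $. By the homomorphism theorem, $\tau _{B}\mu $ factors through the natural epimorphism $\tau _{A}$. Concretely, the map $\varphi \left( a/T_{A}\right) =\mu \left( a\right) /T_{B}$ is well defined and is a homomorphism. It satisfies $\varphi \tau _{A}=\tau _{B}\mu $ by construction.

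\emph{Uniqueness.} If $\varphi _{1},\varphi _{2}$ both close the right column, then $\varphi _{1}\tau _{A}=\tau _{B}\mu =\varphi _{2}\tau _{A}$. Since $\tau _{A}$ is surjective, Remark \ref{epi} gives $\varphi _{1}=\varphi _{2}$. This justifies writing $\varphi =\left( \mu ,\begin{array}{c}\left( A,T_{A}\right) \\ \left( B,T_{B}\right)\end{array}\right) $.

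\emph{Lifting for free $A$.} Let $A=F\left( X\right) $ and fix $\varphi $. For each $x\in X$, the element $\varphi \tau _{A}\left( x\right) \in B/T_{B}$ has a preimage under $\tau _{B}$, because $\tau _{B}$ is surjective. Choose one such preimage $b_{x}\in B$. By freeness, there is a homomorphism $\mu :F\left( X\right) \rightarrow B$ with $\mu \left( x\right) =b_{x}$. Then $\tau _{B}\mu $ and $\varphi \tau _{A}$ are homomorphisms from the free algebra $F\left( X\right) $ that agree on the generators $X$, so they coincide. By Remark \ref{nn_sym}, this single equation puts $\mu $ in the left-column set and $\varphi $ in the right-column set. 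The left-column lift is in general not unique: any other choice of the $b_{x}$ inside the same $T_{B}$-classes also works. This is presumably why the paper's notation $\mu =\left( \ldots ,\varphi \right) $ is reserved for the special case where the set happens to be a singleton.

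Each step is routine. The only point needing care is the lifting step, which uses both that $\tau _{B}$ is onto and that $A$ is free; without freeness of $A$ the lift can fail, so I would state that hypothesis explicitly if the proposition includes the lifting claim.
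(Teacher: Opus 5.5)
Your necessity, sufficiency and uniqueness arguments are the paper's items 2), 1) and 3), with the same reasoning. The paper only cites item 1) as the standard factorization lemma, whereas you write out $\varphi (a/T_{A})=\mu (a)/T_{B}$ explicitly; so the approach is essentially the same.

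Because the statement you were shown was truncated, two bookkeeping points need fixing:

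\begin{enumerate}
\item[(i)] The proposition also contains item 4), $\left( id_{A},\begin{array}{c}\left( A,T_{A}\right) \\ \left( A,T_{A}\right)\end{array}\right) =id_{A/T_{A}}$, which you did not state. It follows at once from what you proved. Since $id_{A}(T_{A})=T_{A}$, sufficiency and uniqueness make the set a singleton. The identity $id_{A/T_{A}}\tau _{A}=\tau _{A}=\tau _{A}\,id_{A}$ then shows its element is $id_{A/T_{A}}$.
\item[(ii)] Your lifting claim for free $A$ is not part of this proposition. It is the paper's separate Proposition \ref{projective}, which the paper attributes to the projective property of free algebras. Your explicit argument for it, choosing preimages of the generators under the surjective $\tau _{B}$ and extending by freeness, is correct.
\end{enumerate}
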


\begin{enumerate}
\item[1)] \textit{If there exists a homomorphism }$\mu :A\rightarrow B$%
\textit{\ such that }$\mu \left( T_{A}\right) \subseteq T_{B}$\textit{\
holds, then there exists homomorphism }$\varphi :A/T_{A}\rightarrow B/T_{B}$%
\textit{\ such that the diagram (\ref{squear}) is commutative, i.e. }$%
\varphi \tau _{A}=\tau _{B}\mu $\textit{. In our notation we can write that
in this case by }$\left( \mu ,%
\begin{array}{c}
\left( A,T_{A}\right)  \\ 
\left( B,T_{B}\right) 
\end{array}%
\right) \neq \varnothing $.

\item[2)] \noindent \noindent \textit{Vice versa, if }$\varphi
:A/T_{A}\rightarrow B/T_{B}$ is a \textit{homomorphism and}\linebreak $\mu
\in \left( 
\begin{array}{c}
\left( A,T_{A}\right)  \\ 
\left( B,T_{B}\right) 
\end{array}%
,\varphi \right) $\textit{, then }$\mu \left( T_{A}\right) \subseteq T_{B}$%
\textit{.}

\item[3)] \textit{In the conditions of item 1)} $\left\vert \left( \mu ,%
\begin{array}{c}
\left( A,T_{A}\right)  \\ 
\left( B,T_{B}\right) 
\end{array}%
\right) \right\vert =1$.

\item[4)] $\left( id_{A},%
\begin{array}{c}
\left( A,T_{A}\right)  \\ 
\left( A,T_{A}\right) 
\end{array}%
\right) =id_{A/T_{A}}$\textit{.}
\end{enumerate}

\begin{proof}
1) The first item of this proposition is a well-known fact from universal
algebra. In module theory, it is known as the lemma of two short exact
sequences. Therefore, we will only give a proof of others items of this
proposition, which also don't cause any difficulties.

2) We consider the diagram (\ref{squear}). This diagram is commutative
because $\mu \in \left( 
\begin{array}{c}
\left( A,T_{A}\right)  \\ 
\left( B,T_{B}\right) 
\end{array}%
,\varphi \right) $. We suppose that $\left( a^{\prime },a^{\prime \prime
}\right) \in T_{A}$. The $\tau _{A}\left( a^{\prime }\right) =\tau
_{A}\left( a^{\prime \prime }\right) $ holds. Therefore, the%
\begin{equation*}
\tau _{B}\mu \left( a^{\prime }\right) =\varphi \tau _{A}\left( a^{\prime
}\right) =\varphi \tau _{A}\left( a^{\prime \prime }\right) =\tau _{B}\mu
\left( a^{\prime \prime }\right) 
\end{equation*}%
holds. So $\left( \mu \left( a^{\prime }\right) ,\mu \left( a^{\prime \prime
}\right) \right) \in T_{B}$.

3) If two homomorphisms $\varphi _{i}:A/T_{A}\rightarrow B/T_{B}$, $i=1,2$,
close the diagram (\ref{squear}) commutative then $\varphi _{1}\tau
_{A}=\tau _{B}\mu =\varphi _{2}\tau _{A}$ and $\varphi _{1}=\varphi _{2}$ by
Remark \ref{epi}.

4) We denote by $\tau $ the natural epimorphism $A\rightarrow A/T_{A}$. It
is clear that $id_{A}\left( T_{A}\right) =T_{A}$. So $\left\vert \left(
id_{A},%
\begin{array}{c}
\left( A,T_{A}\right)  \\ 
\left( A,T_{A}\right) 
\end{array}%
\right) \right\vert =1$ by 3). Also we have that $id_{A}\tau =\tau
=id_{A/T_{A}}\tau $. Therefore $id_{A/T_{A}}\in \left( id_{A},%
\begin{array}{c}
\left( A,T_{A}\right)  \\ 
\left( A,T_{A}\right) 
\end{array}%
\right) $.
\end{proof}

\begin{proposition}
\label{projective}If $i=1,2$, $F\left( X_{i}\right) \in \mathrm{Ob}\Theta
^{0}$, $T_{i}$ are congruences in $F\left( X_{i}\right) $ accordingly and $%
\varphi :F\left( X_{1}\right) /T_{1}\rightarrow F\left( X_{2}\right) /T_{2}$
is a homomorphism, then\linebreak $\left( 
\begin{array}{c}
\left( F\left( X_{1}\right) ,T_{1}\right)  \\ 
\left( F\left( X_{2}\right) ,T_{2}\right) 
\end{array}%
,\varphi \right) \neq \varnothing $.
\end{proposition}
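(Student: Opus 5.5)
We construct the required $\mu$ directly, using the freeness of $F\left( X_{1}\right)$ to lift $\varphi \tau _{1}$ through $\tau _{2}$. Denote by $\tau _{i}:F\left( X_{i}\right) \rightarrow F\left( X_{i}\right) /T_{i}$, $i=1,2$, the natural epimorphisms. We need a homomorphism $\mu :F\left( X_{1}\right) \rightarrow F\left( X_{2}\right) $ such that $\tau _{2}\mu =\varphi \tau _{1}$, i.e. a member of the set $\left( \begin{array}{c} \left( F\left( X_{1}\right) ,T_{1}\right) \\ \left( F\left( X_{2}\right) ,T_{2}\right) \end{array},\varphi \right) $.

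First, for each free generator $x\in X_{1}$ consider the element $\varphi \tau _{1}\left( x\right) \in F\left( X_{2}\right) /T_{2}$. Since $\tau _{2}$ is surjective, we can choose (the set $X_{1}$ is finite, so no choice principle issue) an element $w_{x}\in F\left( X_{2}\right) $ with $\tau _{2}\left( w_{x}\right) =\varphi \tau _{1}\left( x\right) $.

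Second, by the universal property of the free algebra $F\left( X_{1}\right) $ in $\Theta $, the map $x\mapsto w_{x}$, $x\in X_{1}$, extends uniquely to a homomorphism $\mu :F\left( X_{1}\right) \rightarrow F\left( X_{2}\right) $. The two homomorphisms $\tau _{2}\mu $ and $\varphi \tau _{1}$ from $F\left( X_{1}\right) $ to $F\left( X_{2}\right) /T_{2}$ coincide on the generating set $X_{1}$ by construction. Since a homomorphism from $F\left( X_{1}\right) $ is determined by its values on $X_{1}$ (again by freeness), they are equal. Hence $\tau _{2}\mu =\varphi \tau _{1}$, and the set in question is nonempty.

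There is no real obstacle here. The only point requiring care is that the comparison $\tau _{2}\mu =\varphi \tau _{1}$ uses uniqueness in the universal property, with $F\left( X_{2}\right) /T_{2}\in \Theta $ because varieties are closed under quotients. In effect this is the projectivity of free algebras with respect to surjections, which is what the label of the proposition suggests.
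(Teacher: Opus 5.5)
Your proof is correct and takes the same approach as the paper. The paper gives no argument beyond citing the projectivity of free algebras with respect to surjective homomorphisms. You have spelled out that standard argument: lift $\varphi\tau_{1}(x)$ through $\tau_{2}$ for each $x\in X_{1}$, extend by freeness using $F(X_{2})\in\Theta$, and conclude $\tau_{2}\mu=\varphi\tau_{1}$ because the two homomorphisms agree on the generating set $X_{1}$.
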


This proposition is an elementary consequence of the well-known projective
property of free algebras.

\section{Categories and functors of universal algebraic geometry}

\setcounter{equation}{0}

We consider the variety of algebras $\Theta $. The category $\Theta ^{0}$ of
the finitely generated free algebras of this variety was defined in Section %
\ref{intr}.

It is also natural to consider for every algebra $H\in \Theta $ the
following two categories.

\subsection{The categories of $H$-closed congruences and of coordinate
algebras of the algebra $H$\label{categories}}

The first category $\mathfrak{Cl}_{H}$ is the category of the $H$-closed
congruences. The objects of this category are all couples $\left( F\left(
X\right) ,T\right) $, where $F\left( X\right) \in \mathrm{Ob}\Theta ^{0}$, $%
T\in Cl_{H}\left( F\left( X\right) \right) $. We define as follows the
morphisms of this category: if%
\begin{equation*}
\left( F\left( X_{1}\right) ,T_{1}\right) ,\left( F\left( X_{2}\right)
,T_{2}\right) \in \mathrm{Ob}\mathfrak{Cl}_{H}
\end{equation*}%
then%
\begin{equation*}
\mathrm{Mor}_{\mathfrak{Cl}_{H}}\left( \left( F\left( X_{1}\right)
,T_{1}\right) ,\left( F\left( X_{2}\right) ,T_{2}\right) \right) =\left\{
\varphi \in \mathrm{Hom}\left( F\left( X_{1}\right) ,F\left( X_{2}\right)
\right) \mid \varphi \left( T_{1}\right) \subseteq T_{2}\right\} .
\end{equation*}

\begin{remark}
\label{alg_syst}In other words, every object of this category is some
algebraic systems $F\left( X\right) \in \mathrm{Ob}\Theta ^{0}$ equipped by
algebraic operations of signature of our variety $\Theta $ and also by some
relation $T\in Cl_{H}\left( F\left( X\right) \right) $. The morphisms of
this category are homomorphisms of these algebraic systems (see \cite[I, 2.2]%
{Mal}) and the composition of these morphisms is a composition of the
corresponding homomorphisms.
\end{remark}

The second category $\mathfrak{Cor}_{H}$ is the category of the coordinate
algebras of the algebra $H$. The objects of this category are all quotient
algebras $F\left( X\right) /T$, where $F\left( X\right) \in \mathrm{Ob}%
\Theta ^{0}$, $T\in Cl_{H}\left( F\left( X\right) \right) $ and morphisms of
this category are all homomorphisms of these algebras. This is one example
of algebraic category.

\subsection{Factorization functor}

If $\left( F\left( X\right) ,T\right) \in \mathrm{Ob}\mathfrak{Cl}_{H}$,
then $F\left( X\right) /T\in \mathrm{Ob}\mathfrak{Cor}_{H}$. If%
\begin{equation*}
\left( F\left( X_{1}\right) ,T_{1}\right) ,\left( F\left( X_{2}\right)
,T_{2}\right) \in \mathrm{Ob}\mathfrak{Cl}_{H}
\end{equation*}%
and%
\begin{equation*}
\mu \in \mathrm{Mor}_{\mathfrak{Cl}_{H}}\left( \left( F\left( X_{1}\right)
,T_{1}\right) ,\left( F\left( X_{2}\right) ,T_{2}\right) \right)
\end{equation*}%
then $\mu \left( T_{1}\right) \subseteq T_{2}$. So, by Proposition \ref%
{squear_prop}, items 1) and 3), we have that $\left\vert \left( \mu ,%
\begin{array}{c}
\left( F\left( X_{1}\right) ,T_{1}\right) \\ 
\left( F\left( X_{2}\right) ,T_{2}\right)%
\end{array}%
\right) \right\vert =1$ and%
\begin{equation*}
\varphi =\left( \mu ,%
\begin{array}{c}
\left( F\left( X_{1}\right) ,T_{1}\right) \\ 
\left( F\left( X_{2}\right) ,T_{2}\right)%
\end{array}%
\right) \in \mathrm{Mor}_{\mathfrak{Cor}_{H}}\left( F\left( X_{1}\right)
/T_{1},F\left( X_{2}\right) /T_{2}\right) .
\end{equation*}

Therefore, it is a natural to define the mappings:%
\begin{equation*}
\left( \mathcal{FR}_{H}\right) _{\mathrm{Ob}}:\mathrm{Ob}\mathfrak{Cl}%
_{H}\rightarrow \mathrm{Ob}\mathfrak{Cor}_{H}
\end{equation*}%
by%
\begin{equation}
\left( \mathcal{FR}_{H}\right) _{\mathrm{Ob}}\left( F\left( X\right)
,T\right) =F\left( X\right) /T,  \label{FR_ob}
\end{equation}%
where $\left( F\left( X\right) ,T\right) \in \mathrm{Ob}\mathfrak{Cl}_{H}$,
and 
\begin{equation*}
\left( \mathcal{FR}_{H}\right) _{\mathrm{Mor}}:\mathrm{Mor}_{\mathfrak{Cl}%
_{H}}\left( \left( F\left( X_{1}\right) ,T_{1}\right) ,\left( F\left(
X_{2}\right) ,T_{2}\right) \right) \rightarrow
\end{equation*}%
\begin{equation*}
\mathrm{Mor}_{\mathfrak{Cor}_{H}}\left( \mathcal{FR}_{\mathrm{Ob}}\left(
F\left( X_{1}\right) ,T_{1}\right) ,\mathcal{FR}_{\mathrm{Ob}}\left( F\left(
X_{2}\right) ,T_{2}\right) \right)
\end{equation*}%
by%
\begin{equation}
\left( \mathcal{FR}_{H}\right) _{\mathrm{Mor}}\left( \mu \right) =\varphi
=\left( \mu ,%
\begin{array}{c}
\left( F\left( X_{1}\right) ,T_{1}\right) \\ 
\left( F\left( X_{2}\right) ,T_{2}\right)%
\end{array}%
\right) ,  \label{FR_mor_s}
\end{equation}%
where $\mu \in \mathrm{Mor}_{\mathfrak{Cl}_{H}}\left( \left( F\left(
X_{1}\right) ,T_{1}\right) ,\left( F\left( X_{2}\right) ,T_{2}\right)
\right) $.

\begin{proposition}
$\mathcal{FR}_{H}$ is a functor $\mathfrak{Cl}_{H}\rightarrow \mathfrak{Cor}%
_{H}$.
\end{proposition}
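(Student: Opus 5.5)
To prove that $\mathcal{FR}_{H}$ is a functor $\mathfrak{Cl}_{H}\rightarrow \mathfrak{Cor}_{H}$, we must check three things: the assignments (\ref{FR_ob}) and (\ref{FR_mor_s}) are well defined with the correct targets, $\mathcal{FR}_{H}$ preserves identity morphisms, and $\mathcal{FR}_{H}$ preserves composition.

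Well-definedness was essentially established just before the statement. If $\left( F\left( X\right) ,T\right) \in \mathrm{Ob}\mathfrak{Cl}_{H}$, then $T\in Cl_{H}\left( F\left( X\right) \right)$, so $F\left( X\right) /T\in \mathrm{Ob}\mathfrak{Cor}_{H}$. For a morphism $\mu$ we have $\mu \left( T_{1}\right) \subseteq T_{2}$, so items 1) and 3) of Proposition \ref{squear_prop} give a unique homomorphism $\varphi$ with $\varphi \tau _{1}=\tau _{2}\mu$. Every homomorphism between coordinate algebras is a morphism of $\mathfrak{Cor}_{H}$, so $\varphi$ lies in the correct hom-set.

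Identities are handled by item 4) of Proposition \ref{squear_prop}. The identity morphism of $\left( F\left( X\right) ,T\right)$ in $\mathfrak{Cl}_{H}$ is $id_{F\left( X\right) }$, by Remark \ref{alg_syst}, since composition there is composition of homomorphisms. Item 4) says exactly that $\left( \mathcal{FR}_{H}\right) _{\mathrm{Mor}}\left( id_{F\left( X\right) }\right) =id_{F\left( X\right) /T}$.

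For composition, take morphisms $\mu _{1}:\left( F\left( X_{1}\right) ,T_{1}\right) \rightarrow \left( F\left( X_{2}\right) ,T_{2}\right)$ and $\mu _{2}:\left( F\left( X_{2}\right) ,T_{2}\right) \rightarrow \left( F\left( X_{3}\right) ,T_{3}\right)$, and put $\varphi _{i}=\left( \mathcal{FR}_{H}\right) _{\mathrm{Mor}}\left( \mu _{i}\right)$.
\begin{itemize}
\item The composite $\mu _{2}\mu _{1}$ is a morphism of $\mathfrak{Cl}_{H}$, since $\mu _{2}\mu _{1}\left( T_{1}\right) \subseteq \mu _{2}\left( T_{2}\right) \subseteq T_{3}$.
\item By the first part of Remark \ref{nn_trans}, $\varphi _{2}\varphi _{1}\in \left( \mu _{2}\mu _{1},\begin{array}{c}\left( F\left( X_{1}\right) ,T_{1}\right) \\ \left( F\left( X_{3}\right) ,T_{3}\right) \end{array}\right)$.
\item This set has exactly one element by item 3) of Proposition \ref{squear_prop}, and that element is $\left( \mathcal{FR}_{H}\right) _{\mathrm{Mor}}\left( \mu _{2}\mu _{1}\right)$ by definition.
\item Hence $\left( \mathcal{FR}_{H}\right) _{\mathrm{Mor}}\left( \mu _{2}\mu _{1}\right) =\varphi _{2}\varphi _{1}$.
\end{itemize}

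None of these steps is a real obstacle. The only point needing care is the composition step: it is the uniqueness in item 3) that allows us to identify $\varphi _{2}\varphi _{1}$ with the image of the composite, rather than merely with some filler of the diagram.
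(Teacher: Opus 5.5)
Your proof is correct and follows essentially the same route as the paper. Identities come from item 4) of Proposition \ref{squear_prop}, and composition comes from the first part of Remark \ref{nn_trans} combined with the uniqueness in item 3); you also check explicitly that $\mu _{2}\mu _{1}\left( T_{1}\right) \subseteq T_{3}$, which the paper only asserts.
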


\begin{proof}
We consider $\left( F\left( X\right) ,T\right) \in \mathrm{Ob}\mathfrak{Cl}%
_{H}$. $id_{F\left( X\right) }:F\left( X\right) \rightarrow F\left( X\right) 
$ is an identity morphism of this object. The%
\begin{equation*}
\mathcal{FR}_{H}\left( id_{F\left( X\right) }\right) =\left( id_{F\left(
X\right) },%
\begin{array}{c}
\left( F\left( X\right) ,T\right) \\ 
\left( F\left( X_{2}\right) ,T\right)%
\end{array}%
\right) =id_{F\left( X\right) /T}
\end{equation*}%
holds by (\ref{FR_mor_s}) and by Proposition \ref{squear_prop} item 4).

We consider%
\begin{equation*}
\left( F\left( X_{1}\right) ,T_{1}\right) ,\left( F\left( X_{2}\right)
,T_{2}\right) ,\left( F\left( X_{3}\right) ,T_{3}\right) \in \mathrm{Ob}%
\mathfrak{Cl}_{H}
\end{equation*}%
and%
\begin{equation*}
\mu _{1}\in \mathrm{Mor}_{\mathfrak{Cl}_{H}}\left( \left( F\left(
X_{1}\right) ,T_{1}\right) ,\left( F\left( X_{2}\right) ,T_{2}\right)
\right) ,
\end{equation*}%
\begin{equation*}
\mu _{2}\in \mathrm{Mor}_{\mathfrak{Cl}_{H}}\left( \left( F\left(
X_{2}\right) ,T_{2}\right) ,\left( F\left( X_{3}\right) ,T_{3}\right)
\right) .
\end{equation*}%
\begin{equation*}
\mu _{2}\mu _{1}\in \mathrm{Mor}_{\mathfrak{Cl}_{H}}\left( \left( F\left(
X_{1}\right) ,T_{1}\right) ,\left( F\left( X_{3}\right) ,T_{3}\right)
\right) 
\end{equation*}%
holds.

\begin{equation*}
\left( \mathcal{FR}_{H}\right) \left( \mu _{i}\right) =\left( \mu _{i},%
\begin{array}{c}
\left( F\left( X_{i}\right) ,T_{i}\right)  \\ 
\left( F\left( X_{i+1}\right) ,T_{i+1}\right) 
\end{array}%
\right) ,
\end{equation*}%
where $i=1,2$, and%
\begin{equation*}
\left( \mathcal{FR}_{H}\right) \left( \mu _{2}\mu _{1}\right) =\left( \mu
_{2}\mu _{1},%
\begin{array}{c}
\left( F\left( X_{1}\right) ,T_{1}\right)  \\ 
\left( F\left( X_{3}\right) ,T_{3}\right) 
\end{array}%
\right) 
\end{equation*}%
hold by (\ref{FR_mor_s}). We have that%
\begin{equation*}
\left( \mathcal{FR}_{H}\right) \left( \mu _{2}\right) \left( \mathcal{FR}%
_{H}\right) \left( \mu _{1}\right) \in \left( \mu _{2}\mu _{1},%
\begin{array}{c}
\left( F\left( X_{1}\right) ,T_{1}\right)  \\ 
\left( F\left( X_{3}\right) ,T_{3}\right) 
\end{array}%
\right) 
\end{equation*}%
by Remark \ref{nn_trans}. Therefore%
\begin{equation*}
\left( \mathcal{FR}_{H}\right) \left( \mu _{2}\mu _{1}\right) =\left( 
\mathcal{FR}_{H}\right) \left( \mu _{2}\right) \left( \mathcal{FR}%
_{H}\right) \left( \mu _{1}\right) 
\end{equation*}%
by Proposition \ref{squear_prop}, item 3). This completes the proof.
\end{proof}

\begin{proposition}
\label{FR_epi}$\mathcal{FR}_{H}$ is an surjective functor.
\end{proposition}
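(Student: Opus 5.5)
We must check the two conditions in the definition of a surjective functor: surjectivity on objects and on morphisms.

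For objects, take any $B \in \mathrm{Ob}\mathfrak{Cor}_{H}$. By the definition of $\mathfrak{Cor}_{H}$, we have $B = F\left( X\right) /T$ for some $F\left( X\right) \in \mathrm{Ob}\Theta ^{0}$ and some $T\in Cl_{H}\left( F\left( X\right) \right)$. Then $\left( F\left( X\right) ,T\right) \in \mathrm{Ob}\mathfrak{Cl}_{H}$, and $\left( \mathcal{FR}_{H}\right) _{\mathrm{Ob}}\left( F\left( X\right) ,T\right) =F\left( X\right) /T=B$ by (\ref{FR_ob}).

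For morphisms, take $\varphi \in \mathrm{Mor}_{\mathfrak{Cor}_{H}}\left( F\left( X_{1}\right) /T_{1},F\left( X_{2}\right) /T_{2}\right)$ with $T_{i}\in Cl_{H}\left( F\left( X_{i}\right) \right)$. The natural preimages of the end objects are $\left( F\left( X_{i}\right) ,T_{i}\right)$, $i=1,2$.

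1. By Proposition \ref{projective}, there is some $\mu \in \left( \begin{array}{c} \left( F\left( X_{1}\right) ,T_{1}\right) \\ \left( F\left( X_{2}\right) ,T_{2}\right) \end{array},\varphi \right)$. This is a homomorphism $\mu :F\left( X_{1}\right) \rightarrow F\left( X_{2}\right)$ with $\varphi \tau _{1}=\tau _{2}\mu$. Concretely, for each $x\in X_{1}$ we pick a representative in $F\left( X_{2}\right)$ of $\varphi \left( \tau _{1}\left( x\right) \right)$ and extend freely; the two homomorphisms then agree on generators.
2. By Proposition \ref{squear_prop}, item 2), $\mu \left( T_{1}\right) \subseteq T_{2}$. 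Hence $\mu \in \mathrm{Mor}_{\mathfrak{Cl}_{H}}\left( \left( F\left( X_{1}\right) ,T_{1}\right) ,\left( F\left( X_{2}\right) ,T_{2}\right) \right)$.
3. By Remark \ref{nn_sym}, $\varphi \in \left( \mu ,\begin{array}{c} \left( F\left( X_{1}\right) ,T_{1}\right) \\ \left( F\left( X_{2}\right) ,T_{2}\right) \end{array}\right)$. By Proposition \ref{squear_prop}, item 3), this set is a singleton. Therefore $\left( \mathcal{FR}_{H}\right) _{\mathrm{Mor}}\left( \mu \right) =\varphi$ by (\ref{FR_mor_s}).

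The only non-formal step is step 1, the lifting of $\varphi$ to $\mu$, and it is already supplied by Proposition \ref{projective} (projectivity of free algebras). No real obstacle remains. The one point to keep in mind is the ambiguity of the notation $F\left( X\right) /T$: it is the pair $\left( F\left( X\right) ,T\right)$, not merely the isomorphism type, that determines the object. So we should choose the preimage objects to be exactly the pairs $\left( F\left( X_{i}\right) ,T_{i}\right)$ presenting the given source and target.
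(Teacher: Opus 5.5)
Your proof is correct and follows the paper's argument step for step. Objects are handled via (\ref{FR_ob}); then you lift $\varphi$ to $\mu$ by Proposition \ref{projective}, get $\mu(T_1)\subseteq T_2$ from Proposition \ref{squear_prop}, item 2), and identify $\varphi=\mathcal{FR}_H(\mu)$ through Remark \ref{nn_sym} and the uniqueness in item 3), exactly as the paper does, including its implicit choice of the pairs $(F(X_i),T_i)$ presenting the given source and target as preimages.
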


\begin{proof}
If $F\left( X\right) /T\in \mathrm{Ob}\mathfrak{Cor}_{H}$, where $F\left(
X\right) \in \mathrm{Ob}\Theta ^{0}$, $T\in Cl_{H}F\left( X\right) $, then $%
\left( F\left( X\right) ,T\right) \in \mathrm{Ob}\mathfrak{Cl}_{H}$ and $%
\mathcal{FR}_{H}\left( F\left( X\right) ,T\right) =F\left( X\right) /T$ by (%
\ref{FR_ob}).

If $F\left( X_{i}\right) \in \mathrm{Ob}\Theta ^{0}$, $T_{i}\in
Cl_{H}F\left( X_{i}\right) $, $F\left( X_{i}\right) /T_{i}\in \mathrm{Ob}%
\mathfrak{Cor}_{H}$, $i=1,2$, and\linebreak $\varphi \in \mathrm{Mor}_{%
\mathfrak{Cor}_{H}}\left( F\left( X_{1}\right) /T_{1},F\left( X_{2}\right)
/T_{2}\right) $, where $F\left( X_{i}\right) /T_{i}=\mathcal{FR}_{H}\left(
F\left( X_{i}\right) ,T_{i}\right) $, $i=1,2$, then, by Proposition \ref%
{projective}, there exists a homomorphism $\mu :F\left( X_{1}\right)
\rightarrow F\left( X_{2}\right) $, such that $\mu \in \left( 
\begin{array}{c}
\left( F\left( X_{1}\right) ,T_{1}\right) \\ 
\left( F\left( X_{2}\right) ,T_{2}\right)%
\end{array}%
,\varphi \right) $. We have by Proposition \ref{squear_prop}, item 2), that $%
\mu \left( T_{1}\right) \subseteq T_{2}$. Hence $\mu \in \mathrm{Mor}_{%
\mathfrak{Cl}_{H}}\left( \left( F\left( X_{1}\right) ,T_{1}\right) ,\left(
F\left( X_{2}\right) ,T_{2}\right) \right) $. $\varphi \in \left( \mu ,%
\begin{array}{c}
\left( F\left( X_{1}\right) ,T_{1}\right) \\ 
\left( F\left( X_{2}\right) ,T_{2}\right)%
\end{array}%
\right) $ by Remark \ref{nn_sym}. $\left\vert \left( \mu ,%
\begin{array}{c}
\left( F\left( X_{1}\right) ,T_{1}\right) \\ 
\left( F\left( X_{2}\right) ,T_{2}\right)%
\end{array}%
\right) \right\vert =1$ by Proposition \ref{squear_prop}, item 3). So $%
\varphi =\left( \mu ,%
\begin{array}{c}
\left( F\left( X_{1}\right) ,T_{1}\right) \\ 
\left( F\left( X_{2}\right) ,T_{2}\right)%
\end{array}%
\right) =\left( \mathcal{FR}_{H}\right) _{\mathrm{Mor}}\left( \mu \right) $
by (\ref{FR_mor_s}).
\end{proof}

Also we can define the mappings:%
\begin{equation*}
\left( \mathcal{FG}_{H}\right) _{\mathrm{Ob}}:\mathrm{Ob}\mathfrak{Cl}%
_{H}\rightarrow \mathrm{Ob}\Theta ^{0}
\end{equation*}%
by 
\begin{equation}
\left( \mathcal{FG}_{H}\right) _{\mathrm{Ob}}\left( F\left( X\right)
,T\right) =F\left( X\right) \in \mathrm{Ob}\Theta ^{0}  \label{FG_ob}
\end{equation}%
for every $\left( F,T\right) \in \mathrm{Ob}\mathfrak{Cl}_{H}$, and 
\begin{equation*}
\left( \mathcal{FG}_{H}\right) _{\mathrm{Mor}}:\mathrm{Mor}_{\mathfrak{Cl}%
_{H}}\left( \left( F\left( X_{1}\right) ,T_{1}\right) ,\left( F\left(
X_{2}\right) ,T_{2}\right) \right) \rightarrow 
\end{equation*}%
\begin{equation*}
\mathrm{Mor}_{\Theta ^{0}}\left( \mathcal{FG}_{\mathrm{Ob}}\left( F\left(
X_{1}\right) ,T_{1}\right) ,\mathcal{FG}_{\mathrm{Ob}}\left( F\left(
X_{2}\right) ,T_{2}\right) \right) 
\end{equation*}%
by

\begin{equation}
\left( \mathcal{FG}_{H}\right) _{\mathrm{Mor}}\left( \varphi \right)
=\varphi \in \mathrm{Mor}_{\Theta ^{0}}\left( F\left( X_{1}\right) ,F\left(
X_{2}\right) \right) .  \label{FG_mor}
\end{equation}%
for every $\varphi \in \mathrm{Mor}_{\mathfrak{Cl}_{H}}\left( \left( F\left(
X_{1}\right) ,T_{1}\right) ,\left( F\left( X_{2}\right) ,T_{2}\right)
\right) $.

If $\varphi _{i}\in \mathrm{Mor}_{\mathfrak{Cl}_{H}}\left( \left( F\left(
X_{i}\right) ,T_{i}\right) ,\left( F\left( X_{i+1}\right) ,T_{i+1}\right)
\right) $ for $i=1,2$, then%
\begin{equation*}
\left( \mathcal{FG}_{H}\right) _{\mathrm{Mor}}\left( \varphi _{2}\varphi
_{1}\right) =\varphi _{2}\varphi _{1}=\left( \mathcal{FG}_{H}\right) _{%
\mathrm{Mor}}\left( \varphi _{2}\right) \left( \mathcal{FG}_{H}\right) _{%
\mathrm{Mor}}\left( \varphi _{1}\right) .
\end{equation*}%
The mapping $\left( \mathcal{FG}_{H}\right) _{\mathrm{Mor}}$ transforms the
identity morphism $id_{F\left( X\right) }:F\left( X\right) \rightarrow
F\left( X\right) $ of object $\left( F\left( X\right) ,T\right) \in \mathrm{%
Ob}\mathfrak{Cl}_{H}$ to the identity morphism $id_{F\left( X\right) }$ of
object $\left( \mathcal{FG}_{H}\right) _{\mathrm{Ob}}\left( F\left( X\right)
,T\right) =F\left( X\right) \in \mathrm{Ob}\Theta ^{0}$. So $\mathcal{FG}_{H}
$ is a functor.

\begin{proposition}
\label{allTheta_0_morpfism}If $\varphi \in \mathrm{Mor}_{\Theta ^{0}}\left(
F\left( X_{1}\right) ,F\left( X_{2}\right) \right) $ then%
\begin{equation*}
\varphi \in \mathrm{Mor}_{\mathfrak{Cl}_{H}}\left( \left( F\left(
X_{1}\right) ,T_{1}\right) ,\left( F\left( X_{2}\right) ,T_{2}\right)
\right) 
\end{equation*}%
for $T_{1}=\left( \Delta _{F\left( X_{1}\right) }\right) _{H}^{\prime \prime
}$ and every $T_{2}\in Cl_{H}\left( F\left( X_{2}\right) \right) $.
\end{proposition}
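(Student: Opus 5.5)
We must show that $\varphi\left(T_{1}\right)\subseteq T_{2}$, where $T_{1}=\left(\Delta_{F\left(X_{1}\right)}\right)_{H}^{\prime\prime}$. By (\ref{cl_def}), since every kernel contains the diagonal, $T_{1}=\bigcap_{\psi\in \mathrm{Hom}\left(F\left(X_{1}\right),H\right)}\ker\psi$. In the same way, since $T_{2}$ is $H$-closed, $T_{2}=\left(T_{2}\right)_{H}^{\prime\prime}=\bigcap\ker\nu$, where $\nu$ runs over all homomorphisms $F\left(X_{2}\right)\rightarrow H$ with $T_{2}\subseteq\ker\nu$. So the plan is to check that $\varphi\left(t\right)\in\ker\nu$ for every $t=\left(f_{1},f_{2}\right)\in T_{1}$ and every such $\nu$.

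Fix such a $\nu$ and such a $t$. The composition $\nu\varphi:F\left(X_{1}\right)\rightarrow H$ is a homomorphism, hence it is one of the $\psi$ in the intersection defining $T_{1}$. Therefore $t\in\ker\left(\nu\varphi\right)$, i.e. $\nu\varphi\left(f_{1}\right)=\nu\varphi\left(f_{2}\right)$. This says exactly that $\varphi\left(t\right)=\left(\varphi\left(f_{1}\right),\varphi\left(f_{2}\right)\right)\in\ker\nu$.

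Since $\nu$ was arbitrary among the homomorphisms with $T_{2}\subseteq\ker\nu$, we get $\varphi\left(t\right)\in\left(T_{2}\right)_{H}^{\prime\prime}=T_{2}$. Together with $\varphi\in\mathrm{Hom}\left(F\left(X_{1}\right),F\left(X_{2}\right)\right)$, this gives the membership in $\mathrm{Mor}_{\mathfrak{Cl}_{H}}$. Both $T_{1}$ and $T_{2}$ are $H$-closed, so both pairs are indeed objects of $\mathfrak{Cl}_{H}$.

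There is no real obstacle here. The only point that needs care is the degenerate case where no $\nu$ exists. The empty intersection is then all of $F\left(X_{2}\right)\times F\left(X_{2}\right)$, so $T_{2}$ is the full relation and the inclusion $\varphi\left(T_{1}\right)\subseteq T_{2}$ holds trivially. In a variety, $\mathrm{Hom}\left(F\left(X\right),H\right)$ is nonempty whenever $H$ is nonempty, so this case is harmless either way.
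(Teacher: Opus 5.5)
Your proof is correct and follows the paper's argument. The paper first shows $\varphi\left(T_{1}\right)\subseteq\left(\Delta_{F\left(X_{2}\right)}\right)_{H}^{\prime\prime}$, using that $\nu\varphi\in\mathrm{Hom}\left(F\left(X_{1}\right),H\right)$ for every $\nu\in\mathrm{Hom}\left(F\left(X_{2}\right),H\right)$, and then uses $\left(\Delta_{F\left(X_{2}\right)}\right)_{H}^{\prime\prime}\subseteq\left(T_{2}\right)_{H}^{\prime\prime}=T_{2}$; you restrict to the $\nu$ with $T_{2}\subseteq\ker\nu$ from the outset, which merges these two steps into one.
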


\begin{proof}
The $\Delta _{F\left( X_{1}\right) }\subseteq \ker \mu $ holds for every $%
\mu \in \mathrm{Hom}\left( F\left( X_{1}\right) ,H\right) $. Therefore%
\begin{equation}
\left( \Delta _{F\left( X_{1}\right) }\right) _{H}^{\prime \prime
}=\bigcap\limits_{\substack{ \mu \in \mathrm{Hom}\left( F\left( X_{1}\right)
,H\right)  \\ \Delta _{F\left( X_{1}\right) }\subseteq \ker \mu }}\ker \mu
=\bigcap\limits_{\mu \in \mathrm{Hom}\left( F\left( X_{1}\right) ,H\right)
}\ker \mu  \label{diag_close}
\end{equation}%
holds by (\ref{cl_def}).

We consider some $\left( f^{\prime },f^{\prime \prime }\right) \in \left(
\Delta _{F\left( X_{1}\right) }\right) _{H}^{\prime \prime }$. Here $%
f^{\prime },f^{\prime \prime }\in F\left( X_{1}\right) $. We have that $\nu
\varphi \left( f^{\prime }\right) =\nu \varphi \left( f^{\prime \prime
}\right) $ holds for every $\nu \in \mathrm{Hom}\left( F\left( X_{2}\right)
,H\right) $ by (\ref{diag_close}), because $\nu \varphi \in \mathrm{Hom}%
\left( F\left( X_{1}\right) ,H\right) $. Therefore,%
\begin{equation*}
\left( \varphi \left( f^{\prime }\right) ,\varphi \left( f^{\prime \prime
}\right) \right) \in \bigcap\limits_{\nu \in \mathrm{Hom}\left( F\left(
X_{2}\right) ,H\right) }\ker \nu =\left( \Delta _{F\left( X_{2}\right)
}\right) _{H}^{\prime \prime },
\end{equation*}%
which means that%
\begin{equation}
\varphi \left( \left( \Delta _{F\left( X_{1}\right) }\right) _{H}^{\prime
\prime }\right) \subseteq \left( \Delta _{F\left( X_{2}\right) }\right)
_{H}^{\prime \prime }  \label{id_incl}
\end{equation}%
holds. And, by (\ref{cl_def}), we have that%
\begin{equation}
\left( \Delta _{F\left( X_{2}\right) }\right) _{H}^{\prime \prime
}=\bigcap\limits_{\nu \in \mathrm{Hom}\left( F\left( X_{2}\right) ,H\right)
}\ker \nu \subseteq \bigcap\limits_{\substack{ \nu \in \mathrm{Hom}\left(
F\left( X_{2}\right) ,H\right)  \\ T_{2}\subseteq \ker \nu }}\ker \nu
=\left( T_{2}\right) _{H}^{\prime \prime }=T_{2}  \label{Diag_close_incl}
\end{equation}%
holds for every $T_{2}\in Cl_{H}\left( F\left( X_{2}\right) \right) $. Hence 
$\varphi \left( \left( \Delta _{F\left( X_{1}\right) }\right) _{H}^{\prime
\prime }\right) \subseteq T_{2}$ by (\ref{id_incl}) and (\ref%
{Diag_close_incl}). So $\varphi \in \mathrm{Mor}_{\mathfrak{Cl}_{H}}\left(
\left( F\left( X_{1}\right) ,T_{1}\right) ,\left( F\left( X_{2}\right)
,T_{2}\right) \right) $ for $T_{1}=\left( \Delta _{F\left( X_{1}\right)
}\right) _{H}^{\prime \prime }$ and every $T_{2}\in Cl_{H}\left( F\left(
X_{2}\right) \right) $.
\end{proof}

\begin{proposition}
$\mathcal{FG}_{H}$ is an surjective functor.
\end{proposition}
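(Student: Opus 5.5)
We verify the two clauses of the definition of a surjective functor directly, following the pattern of the proof of Proposition \ref{FR_epi}. The tool for morphisms is Proposition \ref{allTheta_0_morpfism}.

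For objects, we take an arbitrary $F\left( X\right) \in \mathrm{Ob}\Theta ^{0}$. We need some $H$-closed congruence on $F\left( X\right) $, and the natural choice is $T=\left( \Delta _{F\left( X\right) }\right) _{H}^{\prime \prime }$. We check that it is $H$-closed, i.e. $T_{H}^{\prime \prime }=T$. By (\ref{diag_close}), $T=\bigcap_{\mu \in \mathrm{Hom}\left( F\left( X\right) ,H\right) }\ker \mu $, so $T\subseteq \ker \mu $ for every $\mu $. Hence $T_{H}^{\prime }$ is the whole of $\mathrm{Hom}\left( F\left( X\right) ,H\right) $, and therefore $T_{H}^{\prime \prime }=T$ by (\ref{cl_def}). (Alternatively, one could cite the general identity $S^{\prime \prime \prime }=S^{\prime }$ of the Galois correspondence.) Thus $\left( F\left( X\right) ,T\right) \in \mathrm{Ob}\mathfrak{Cl}_{H}$, and $\mathcal{FG}_{H}\left( F\left( X\right) ,T\right) =F\left( X\right) $ by (\ref{FG_ob}).

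For morphisms, we take $\varphi \in \mathrm{Mor}_{\Theta ^{0}}\left( F\left( X_{1}\right) ,F\left( X_{2}\right) \right) $ and set $T_{i}=\left( \Delta _{F\left( X_{i}\right) }\right) _{H}^{\prime \prime }$ for $i=1,2$. By the object step, $T_{2}\in Cl_{H}\left( F\left( X_{2}\right) \right) $. Proposition \ref{allTheta_0_morpfism} then gives $\varphi \in \mathrm{Mor}_{\mathfrak{Cl}_{H}}\left( \left( F\left( X_{1}\right) ,T_{1}\right) ,\left( F\left( X_{2}\right) ,T_{2}\right) \right) $. By (\ref{FG_ob}) and (\ref{FG_mor}), we have $\mathcal{FG}_{H}\left( F\left( X_{i}\right) ,T_{i}\right) =F\left( X_{i}\right) $ and $\mathcal{FG}_{H}\left( \varphi \right) =\varphi $.

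The only step that is not a pure citation is the check that $\left( \Delta _{F\left( X\right) }\right) _{H}^{\prime \prime }$ is $H$-closed, since the paper has not yet stated this explicitly. It follows directly from (\ref{diag_close}) as shown above.
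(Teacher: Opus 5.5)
Your proof is correct and matches the paper's argument: objects are hit via $\left( F\left( X\right) ,\left( \Delta _{F\left( X\right) }\right) _{H}^{\prime \prime }\right) $, and morphisms via Proposition \ref{allTheta_0_morpfism}. You also fill in two points the paper leaves implicit: the explicit check that $\left( \Delta _{F\left( X\right) }\right) _{H}^{\prime \prime }$ is $H$-closed, and the concrete choice $T_{2}=\left( \Delta _{F\left( X_{2}\right) }\right) _{H}^{\prime \prime }$, which shows that a suitable $T_{2}\in Cl_{H}\left( F\left( X_{2}\right) \right) $ exists.
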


\begin{proof}
If $F\left( X\right) \in \mathrm{Ob}\Theta ^{0}$ then $\left( F\left(
X\right) ,\left( \Delta _{F\left( X\right) }\right) _{H}^{\prime \prime
}\right) \in \mathrm{Ob}\mathfrak{Cl}_{H}$ and $F\left( X\right) =\mathcal{FG%
}_{H}\left( F\left( X\right) ,\left( \Delta _{F\left( X\right) }\right)
_{H}^{\prime \prime }\right) $.

If $\varphi \in \mathrm{Mor}_{\Theta ^{0}}\left( F\left( X_{1}\right)
,F\left( X_{2}\right) \right) $ then%
\begin{equation*}
\varphi \in \mathrm{Mor}_{\mathfrak{Cl}_{H}}\left( \left( F\left(
X_{1}\right) ,\left( \Delta _{F\left( X_{1}\right) }\right) _{H}^{\prime
\prime }\right) ,\left( F\left( X_{2}\right) ,T_{2}\right) \right) 
\end{equation*}%
for every $T_{2}\in Cl_{H}F\left( X_{2}\right) $ by Proposition \ref%
{allTheta_0_morpfism}. Hence $\mathcal{FG}_{H}\left( \varphi \right)
=\varphi $, where on the left side $\varphi \in \mathrm{Mor}_{\mathfrak{Cl}%
_{H}}$ and on the right side $\varphi \in \mathrm{Mor}_{\Theta ^{0}}$.
\end{proof}

\section{When two algebras has same geometry?}

\setcounter{equation}{0}

We consider two algebras $H_{1},H_{2}\in \Theta $, and ask when they have
same geometry? It means, we ask when in these two algebras the transition
from syntax of our language to semantics of these algebras is carried out in
the same way. Or, more specifically and simply, when the transition from
system of equations to their solutions is carried out in the same way. The
answer depends on what we mean by the words "the same way".

The narrowest understanding of the coincidence of geometries of algebras is
the concept of geometric equivalence.

\begin{definition}
\cite{PlotkinVarCat}We say that \textit{algebras }$H_{1},H_{2}\in \Theta $%
\textit{\ are \textbf{geometrically equivalent}} if for every $F\left(
X\right) \in \mathrm{Ob}\Theta ^{0}$ the $Cl_{H_{1}}F\left( X\right)
=Cl_{H_{2}}F\left( X\right) \ $holds.
\end{definition}

\begin{proposition}
Two \textit{algebras }$H_{1},H_{2}\in \Theta $\textit{\ are }geometrically
equivalent if and only if $\mathfrak{Cl}_{H_{1}}=\mathfrak{Cl}_{H_{2}}$.
\end{proposition}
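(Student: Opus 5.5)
We prove the two implications separately, using only the definitions of $\mathfrak{Cl}_{H}$ and of geometric equivalence. The point is that $\mathfrak{Cl}_{H}$ is built from the families $Cl_{H}\left( F\left( X\right) \right)$ and nothing else.

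For necessity, suppose $H_{1}$ and $H_{2}$ are geometrically equivalent, so $Cl_{H_{1}}\left( F\left( X\right) \right) =Cl_{H_{2}}\left( F\left( X\right) \right)$ for every $F\left( X\right) \in \mathrm{Ob}\Theta ^{0}$. Then $\mathrm{Ob}\mathfrak{Cl}_{H_{1}}=\mathrm{Ob}\mathfrak{Cl}_{H_{2}}$, since both consist of the couples $\left( F\left( X\right) ,T\right)$ with $T$ in the common family. For two such objects, the morphism sets in both categories equal $\left\{ \varphi \in \mathrm{Hom}\left( F\left( X_{1}\right) ,F\left( X_{2}\right) \right) \mid \varphi \left( T_{1}\right) \subseteq T_{2}\right\}$. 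This description does not mention $H_{i}$, so the morphism sets coincide. By Remark \ref{alg_syst} the composition is in both cases the ordinary composition of homomorphisms, and the identities are the maps $id_{F\left( X\right) }$. Hence $\mathfrak{Cl}_{H_{1}}=\mathfrak{Cl}_{H_{2}}$.

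For sufficiency, suppose $\mathfrak{Cl}_{H_{1}}=\mathfrak{Cl}_{H_{2}}$, and in particular $\mathrm{Ob}\mathfrak{Cl}_{H_{1}}=\mathrm{Ob}\mathfrak{Cl}_{H_{2}}$. Fix $F\left( X\right) \in \mathrm{Ob}\Theta ^{0}$ and take $T\in Cl_{H_{1}}\left( F\left( X\right) \right)$. Then $\left( F\left( X\right) ,T\right) \in \mathrm{Ob}\mathfrak{Cl}_{H_{1}}=\mathrm{Ob}\mathfrak{Cl}_{H_{2}}$. By the definition of the objects of $\mathfrak{Cl}_{H_{2}}$, this gives $T\in Cl_{H_{2}}\left( F\left( X\right) \right)$. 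The argument is symmetric in $H_{1}$ and $H_{2}$, so the two families are equal for every $X$, and the algebras are geometrically equivalent.

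The only delicate point is what equality of categories means. We take it to mean literal equality of the object classes, the hom-sets and the composition. We must then check that an object $\left( F\left( X\right) ,T\right)$ determines $T$ uniquely as a component of the couple. This holds because objects are ordered pairs, so the object classes recover exactly the families $Cl_{H}\left( F\left( X\right) \right)$.
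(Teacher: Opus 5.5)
Your proof is correct and follows essentially the same route as the paper. Equality of the families $Cl_{H_{i}}\left( F\left( X\right) \right)$ gives equal object classes and equal hom-sets, since the condition $\varphi \left( T_{1}\right) \subseteq T_{2}$ does not involve $H_{i}$, and composition is ordinary composition by Remark \ref{alg_syst}; conversely, equal object classes give back equal families $Cl_{H_{i}}\left( F\left( X\right) \right)$. Your explicit remarks on identity morphisms and on recovering $T$ from the ordered pair are harmless additions that the paper leaves implicit.
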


\begin{proof}
We suppose that for every $F\left( X\right) \in \mathrm{Ob}\Theta ^{0}$ the $%
Cl_{H_{1}}F\left( X\right) =Cl_{H_{2}}F\left( X\right) $\linebreak holds. If 
$\left( F\left( X\right) ,T\right) \in \mathrm{Ob}\mathfrak{Cl}_{H_{1}}$,
then $T\in Cl_{H_{1}}F\left( X\right) =Cl_{H_{2}}F\left( X\right) $. So $%
\left( F\left( X\right) ,T\right) \in \mathrm{Ob}\mathfrak{Cl}_{H_{2}}$ and $%
\mathrm{Ob}\mathfrak{Cl}_{H_{1}}\subseteq \mathrm{Ob}\mathfrak{Cl}_{H_{2}}$.
By symmetric reason $\mathrm{Ob}\mathfrak{Cl}_{H_{2}}\subseteq \mathrm{Ob}%
\mathfrak{Cl}_{H_{1}}$ and $\mathrm{Ob}\mathfrak{Cl}_{H_{1}}=\mathrm{Ob}%
\mathfrak{Cl}_{H_{2}}$.

If%
\begin{equation*}
\left( F\left( X_{1}\right) ,T_{1}\right) ,\left( F\left( X_{2}\right)
,T_{2}\right) \in \mathrm{Ob}\mathfrak{Cl}_{H_{1}}
\end{equation*}%
and%
\begin{equation*}
\varphi \in \mathrm{Mor}_{\mathfrak{Cl}_{H_{1}}}\left( \left( F\left(
X_{1}\right) ,T_{1}\right) ,\left( F\left( X_{2}\right) ,T_{2}\right)
\right) 
\end{equation*}%
then $F\left( X_{1}\right) ,F\left( X_{2}\right) \in \mathrm{Ob}\Theta ^{0}$%
, $\varphi \in \mathrm{Hom}\left( F\left( X_{1}\right) ,F\left( X_{2}\right)
\right) $, for $i=1,2$,%
\begin{equation*}
T_{i}\in Cl_{H_{1}}F\left( X_{i}\right) =Cl_{H_{2}}F\left( X_{i}\right) 
\end{equation*}%
holds and $\varphi \left( T_{1}\right) \subseteq T_{2}$, therefore%
\begin{equation*}
\varphi \in \mathrm{Mor}_{\mathfrak{Cl}_{H_{2}}}\left( \left( F\left(
X_{1}\right) ,T_{1}\right) ,\left( F\left( X_{2}\right) ,T_{2}\right)
\right) .
\end{equation*}%
So%
\begin{equation*}
\mathrm{Mor}_{\mathfrak{Cl}_{H_{1}}}\left( \left( F\left( X_{1}\right)
,T_{1}\right) ,\left( F\left( X_{2}\right) ,T_{2}\right) \right) \subseteq 
\mathrm{Mor}_{\mathfrak{Cl}_{H_{2}}}\left( \left( F\left( X_{1}\right)
,T_{1}\right) ,\left( F\left( X_{2}\right) ,T_{2}\right) \right) .
\end{equation*}%
By symmetric reason%
\begin{equation*}
\mathrm{Mor}_{\mathfrak{Cl}_{H_{2}}}\left( \left( F\left( X_{1}\right)
,T_{1}\right) ,\left( F\left( X_{2}\right) ,T_{2}\right) \right) \subseteq 
\mathrm{Mor}_{\mathfrak{Cl}_{H_{1}}}\left( \left( F\left( X_{1}\right)
,T_{1}\right) ,\left( F\left( X_{2}\right) ,T_{2}\right) \right) 
\end{equation*}%
and%
\begin{equation*}
\mathrm{Mor}_{\mathfrak{Cl}_{H_{1}}}\left( \left( F\left( X_{1}\right)
,T_{1}\right) ,\left( F\left( X_{2}\right) ,T_{2}\right) \right) =\mathrm{Mor%
}_{\mathfrak{Cl}_{H_{2}}}\left( \left( F\left( X_{1}\right) ,T_{1}\right)
,\left( F\left( X_{2}\right) ,T_{2}\right) \right) .
\end{equation*}

By Remark \ref{alg_syst}, the composition of morphisms both in the category $%
\mathfrak{Cl}_{H_{1}}$ and in the category $\mathfrak{Cl}_{H_{2}}$ is a
composition of the corresponding homomorphisms of algebraic systems equipped
by algebraic operations of signature of our variety $\Theta $ and also by
some binary relation. Therefore $\mathfrak{Cl}_{H_{1}}=\mathfrak{Cl}_{H_{2}}$%
.

Now we suppose that $\mathfrak{Cl}_{H_{1}}=\mathfrak{Cl}_{H_{2}}$. If $%
F\left( X\right) \in \mathrm{Ob}\Theta ^{0}$ and $T\in Cl_{H_{1}}F\left(
X\right) $. Then $\left( F\left( X\right) ,T\right) \in \mathrm{Ob}\mathfrak{%
Cl}_{H_{1}}=\mathrm{Ob}\mathfrak{Cl}_{H_{2}}$, so $T\in Cl_{H_{2}}F\left(
X\right) $ and $Cl_{H_{1}}F\left( X\right) \subseteq Cl_{H_{2}}F\left(
X\right) $. By symmetric reason $Cl_{H_{2}}F\left( X\right) \subseteq
Cl_{H_{1}}F\left( X\right) $, so $Cl_{H_{1}}F\left( X\right)
=Cl_{H_{2}}F\left( X\right) $.
\end{proof}

A notion of weak similarity is a wide generalization of the notion of
geometric equivalence algebras.

\begin{definition}
\textit{We say that} \textit{algebras }$H_{1},H_{2}\in \Theta $\textit{\ are 
\textbf{weak} \textbf{similar} if there is an isomorphism of categories: }$%
\Lambda :\mathfrak{Cl}_{H_{1}}\rightarrow \mathfrak{Cl}_{H_{2}}$.
\end{definition}

This concept is very wide, so studying its properties seems to as to be a
challenging task. The first reasonable step is, in our opinion, the study of
properties of automorphic equivalence of algebras. This concept is a subject
of this paper.

\section{Automorphic equivalence of algebras}

\setcounter{equation}{0}

We suppose from here onwards that our variety $\Theta $ fulfills the

\begin{condition}
\label{monoiso}$\Phi \left( F\left( x_{1}\right) \right) \cong F\left(
x_{1}\right) $ for every automorphism $\Phi $ of the category $\Theta ^{0}$.
\end{condition}

Many really interesting varieties of algebras satisfy this condition. For
example, this condition is fulfilled by the variety of all semigroups, by
all subvarieties of the variety of all groups, by all subvarieties of the
variety of all linear algebras and so on. For us an linear algebra over a
field $k$ is a vector space a field $k$ which is also is a ring, which is
not necessarily commutative, associative, not necessarily with unit, with a
bilinear multiplication.

The notion of automorphic equivalence is wider than the notion of geometric
equivalence, but narrower than the notion of similarity of algebras.

The notion of automorphic equivalence of algebras was discussed from \cite%
{PlotkinSame}. All definitions of this notion were somewhat unnatural and
cumbersome. For example, in \cite[Definition 4.3]{TsurkovManySorted} we can
see this

\begin{definition}
\label{old}We say that \textit{an automorphism }$\Phi :\Theta
^{0}\rightarrow \Theta ^{0}$ provides the \textit{\textbf{automorphic
equivalence }}of\textit{\textbf{\ }algebras }$H_{1},H_{2}\in \Theta $\textit{%
\ (or algebras }$H_{1},H_{2}$ are\textit{\ \textbf{automorphically
equivalent }}by \textit{an automorphism }$\Phi $)\textit{\ if there exist
bijections}%
\begin{equation*}
\alpha (\Phi )_{F\left( X\right) }:Cl_{H_{1}}(F\left( X\right) )\rightarrow
Cl_{H_{2}}(\Phi (F\left( X\right) ))
\end{equation*}%
for every $F\left( X\right) \in \mathrm{Ob}\Theta ^{0}$, \textit{coordinated
in the following sense: if }$F\left( X_{1}\right) ,F\left( X_{2}\right) \in 
\mathrm{Ob}\Theta ^{0}$\textit{, }$\mu _{1},\mu _{2}\in \mathrm{Hom}\left(
F\left( X_{1}\right) ,F\left( X_{2}\right) \right) $\textit{, }$T\in
Cl_{H_{1}}(F\left( X_{2}\right) )$\textit{\ then}%
\begin{equation}
\tau \mu _{1}=\tau \mu _{2},  \label{old_1}
\end{equation}%
\textit{if and only if }%
\begin{equation}
\widetilde{\tau }\Phi \left( \mu _{1}\right) =\widetilde{\tau }\Phi \left(
\mu _{2}\right) ,  \label{old_2}
\end{equation}%
\textit{where }$\tau :F\left( X_{2}\right) \rightarrow F\left( X_{2}\right)
/T$\textit{, }$\widetilde{\tau }:\Phi \left( F\left( X_{2}\right) \right)
\rightarrow \Phi \left( F\left( X_{2}\right) \right) /\alpha (\Phi
)_{F\left( X_{2}\right) }\left( T\right) $\textit{\ are the natural
epimorphisms.}
\end{definition}

This Definition is some clarification of the Definition from \cite%
{PlotkinSame}. But it also looks unnatural and bulky.

\begin{remark}
\label{phi_inv_eq}If algebras $H_{1},H_{2}\in \Theta $ are subjects of
Definition \ref{old} with automorphism\textit{\ }$\Phi :\Theta
^{0}\rightarrow \Theta ^{0}$ then algebras $H_{2},H_{1}$ are subjects of
Definition \ref{old} with automorphism\textit{\ }$\Phi ^{-1}$. It was proved
in \cite[Corollary 1 from Theorem 4.1]{TsurkovManySorted}.
\end{remark}

In this paper we will give an other definition of automorphic equivalence
which seems clearer and more natural to us. We use for this purpose the
language of category theory.

\begin{definition}
\label{new}We say that two algebras $H_{1},H_{2}\in \Theta $ are \textbf{%
automorphically equivalent} if there are three isomorphisms: $\Lambda :%
\mathfrak{Cl}_{H_{1}}\rightarrow \mathfrak{Cl}_{H_{2}}$, $\Phi :\Theta
^{0}\rightarrow \Theta ^{0}$, $\Psi :\mathfrak{Cor}_{H_{1}}\rightarrow 
\mathfrak{Cor}_{H_{2}}$such that the diagrams%
\begin{equation*}
\begin{array}{ccc}
\mathfrak{Cl}_{H_{1}} & \overrightarrow{\Lambda } & \mathfrak{Cl}_{H_{2}} \\ 
\downarrow \mathcal{FG}_{H_{1}} &  & \mathcal{FG}_{H_{2}}\downarrow \\ 
\Theta ^{0} & \underrightarrow{\Phi } & \Theta ^{0}%
\end{array}%
,%
\begin{array}{ccc}
\mathfrak{Cl}_{H_{1}} & \overrightarrow{\Lambda } & \mathfrak{Cl}_{H_{2}} \\ 
\downarrow \mathcal{FR}_{H_{1}} &  & \mathcal{FR}_{H_{2}}\downarrow \\ 
\mathfrak{Cor}_{H_{1}} & \underrightarrow{\Psi } & \mathfrak{Cor}_{H_{2}}%
\end{array}%
\end{equation*}%
are commutative.
\end{definition}

Our goal is to prove that Definition \ref{old} and Definition \ref{new} are
equivalent.

If $\Phi $ is an automorphism of the category $\Theta ^{0}$, then there
exists a system of bijections%
\begin{equation*}
\left\{ s_{F\left( X\right) }^{\Phi }:F\left( X\right) \rightarrow \Phi
\left( F\left( X\right) \right) \mid F\left( X\right) \in \mathrm{Ob}\Theta
^{0}\right\} 
\end{equation*}%
such that%
\begin{equation}
\Phi \left( \mu \right) =s_{F\left( X_{2}\right) }^{\Phi }\mu \left(
s_{F\left( X_{1}\right) }^{\Phi }\right) ^{-1}  \label{autom_action}
\end{equation}%
holds for every $\mu \in \mathrm{Mor}_{\Theta ^{0}}\left( F\left(
X_{1}\right) ,F\left( X_{2}\right) \right) $ (see \cite{PlZhit}, \cite[%
Theorem 2.1]{TsurkovManySorted}). (\ref{autom_action}) is equivalent to the
commutativity of the diagram%
\begin{equation*}
\begin{array}{ccc}
F\left( X_{1}\right)  & \overrightarrow{s_{F\left( X_{1}\right) }^{\Phi }} & 
\Phi \left( F\left( X_{1}\right) \right)  \\ 
\downarrow \mu  &  & \Phi \left( \mu \right) \downarrow  \\ 
F\left( X_{2}\right)  & \underrightarrow{s_{F\left( X_{2}\right) }^{\Phi }}
& \Phi \left( F\left( X_{2}\right) \right) 
\end{array}%
.
\end{equation*}

If $\left\{ s_{F\left( X\right) }^{\Phi }:F\left( X\right) \rightarrow \Phi
\left( F\left( X\right) \right) \mid F\left( X\right) \in \mathrm{Ob}\Theta
^{0}\right\} $ is a system of bijections which fulfills (\ref{autom_action})
for automorphism $\Phi $ of the category $\Theta ^{0}$, then the system of
bijections $\left\{ s_{F\left( X\right) }^{\Phi ^{-1}}:F\left( X\right)
\rightarrow \Phi ^{-1}\left( F\left( X\right) \right) \mid F\left( X\right)
\in \mathrm{Ob}\Theta ^{0}\right\} $, where%
\begin{equation}
s_{F\left( X\right) }^{\Phi ^{-1}}=\left( s_{\Phi ^{-1}\left( F\left(
X\right) \right) }^{\Phi }\right) ^{-1}  \label{phi_inv_bij}
\end{equation}%
fulfills (\ref{autom_action}) for automorphism $\Phi ^{-1}$. It was remarked
in \cite[proof of Corollary 1 from Theorem 4.1]{TsurkovManySorted}.

\begin{proposition}
\label{prop_phi_phi_t_incl}If $F\left( X_{1}\right) ,F\left( X_{2}\right)
\in \mathrm{Ob}\Theta ^{0}$, $\mu \in \mathrm{Hom}\left( F\left(
X_{1}\right) ,F\left( X_{2}\right) \right) $, $T_{i}\subseteq F\left(
X_{i}\right) \times F\left( X_{i}\right) $, $i=1,2$, $\mu \left(
T_{1}\right) \subseteq T_{2}$, $\Phi $ is an automorphism of the category $%
\Theta ^{0}$ which fulfills (\ref{autom_action}), then $\Phi \left( \mu
\right) \left( s_{F\left( X_{1}\right) }^{\Phi }\left( T_{1}\right) \right)
\subseteq s_{F\left( X_{2}\right) }^{\Phi }\left( T_{2}\right) $.
\end{proposition}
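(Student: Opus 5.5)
The statement is a direct computation with the formula (\ref{autom_action}); no deeper property of $\Phi$ is needed. I will work elementwise with the pair notation of Section 2, where a map applied to a pair acts componentwise.

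Take an arbitrary element of $s_{F\left( X_{1}\right) }^{\Phi }\left( T_{1}\right)$. By definition it has the form $s_{F\left( X_{1}\right) }^{\Phi }\left( t\right)$ for some $t=\left( t^{\prime },t^{\prime \prime }\right) \in T_{1}$. Apply $\Phi \left( \mu \right)$ to it and substitute (\ref{autom_action}):
\begin{equation*}
\Phi \left( \mu \right) s_{F\left( X_{1}\right) }^{\Phi }\left( t\right) =s_{F\left( X_{2}\right) }^{\Phi }\mu \left( s_{F\left( X_{1}\right) }^{\Phi }\right) ^{-1}s_{F\left( X_{1}\right) }^{\Phi }\left( t\right) =s_{F\left( X_{2}\right) }^{\Phi }\left( \mu \left( t\right) \right) .
\end{equation*}
The cancellation is legitimate because $s_{F\left( X_{1}\right) }^{\Phi }$ is a bijection.

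By hypothesis $\mu \left( T_{1}\right) \subseteq T_{2}$, so $\mu \left( t\right) \in T_{2}$. Hence $s_{F\left( X_{2}\right) }^{\Phi }\left( \mu \left( t\right) \right) \in s_{F\left( X_{2}\right) }^{\Phi }\left( T_{2}\right)$. Since the element was arbitrary, this gives the required inclusion $\Phi \left( \mu \right) \left( s_{F\left( X_{1}\right) }^{\Phi }\left( T_{1}\right) \right) \subseteq s_{F\left( X_{2}\right) }^{\Phi }\left( T_{2}\right)$.

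Equivalently, one can note that (\ref{autom_action}) gives the identity of maps $\Phi \left( \mu \right) s_{F\left( X_{1}\right) }^{\Phi }=s_{F\left( X_{2}\right) }^{\Phi }\mu$, apply both sides to the set $T_{1}$, and use monotonicity of images. There is no real obstacle. The only point needing care is that the componentwise action on pairs respects composition, which holds trivially since $\varphi \psi \left( a_{1},a_{2}\right) =\left( \varphi \psi \left( a_{1}\right) ,\varphi \psi \left( a_{2}\right) \right)$. No property of the $T_{i}$ (such as being congruences or $H$-closed) is used.
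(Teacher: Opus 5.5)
Your proof is correct and follows the paper's argument: substitute (\ref{autom_action}), cancel $\left( s_{F\left( X_{1}\right) }^{\Phi }\right) ^{-1}s_{F\left( X_{1}\right) }^{\Phi }$, and use $\mu \left( T_{1}\right) \subseteq T_{2}$. The only difference is that you work element by element, while the paper computes directly with the image sets.
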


\begin{proof}
\begin{equation*}
\Phi (\mu )\left( s_{F\left( X_{1}\right) }^{\Phi }\left( T_{1}\right)
\right) =s_{F\left( X_{2}\right) }^{\Phi }\mu \left( s_{F\left( X_{1}\right)
}^{\Phi }\right) ^{-1}s_{F\left( X_{1}\right) }^{\Phi }\left( T_{1}\right) =
\end{equation*}%
\begin{equation*}
s_{F\left( X_{2}\right) }^{\Phi }\mu \left( T_{1}\right) \subseteq
s_{F\left( X_{2}\right) }^{\Phi }\left( T_{2}\right) 
\end{equation*}%
by (\ref{autom_action}).
\end{proof}

\subsection{Consequences of the Definition \protect\ref{old}}

In this subsection we suppose that two algebras $H_{1},H_{2}\in \Theta $ are
subjects of Definition \ref{old}.

\begin{equation}
\alpha (\Phi )_{F\left( X\right) }\left( T\right) =s_{F\left( X\right)
}^{\Phi }\left( T\right) ,  \label{alpha}
\end{equation}%
holds by \cite[Theorem 4.1]{TsurkovManySorted} for every $T\in
Cl_{H_{1}}(F\left( X\right) )$, where%
\begin{equation*}
\left\{ s_{F\left( X\right) }^{\Phi }:F\left( X\right) \rightarrow \Phi
\left( F\left( X\right) \right) \mid F\left( X\right) \in \mathrm{Ob}\Theta
^{0}\right\}
\end{equation*}%
is a system of bijections which fulfills (\ref{autom_action}) for the
automorphism $\Phi $.

\begin{remark}
\label{alpha_unic}A system of bijections $\left\{ s_{F\left( X\right)
}^{\Phi }\mid F\left( X\right) \in \mathrm{Ob}\Theta ^{0}\right\} $ is not
uniquely defined by automorphism $\Phi $, but by \cite[Corollary 1 from
Proposition 4.1]{TsurkovManySorted} the bijections%
\begin{equation*}
\left\{ \alpha (\Phi )_{F\left( X\right) }:Cl_{H_{1}}(F\left( X\right)
)\rightarrow Cl_{H_{2}}(\Phi (F\left( X\right) ))\mid F\left( X\right) \in 
\mathrm{Ob}\Theta ^{0}\right\}
\end{equation*}%
are uniquely defined by automorphism $\Phi $.
\end{remark}

\subsubsection{The functor $\Lambda $\label{functor_lambda}}

If $\left( F\left( X\right) ,T\right) \in \mathrm{Ob}\mathfrak{Cl}_{H_{1}}$,
then $F\left( X\right) \in \mathrm{Ob}\Theta ^{0}$, $T\in Cl_{H_{1}}(F\left(
X\right) )$,%
\begin{equation*}
\alpha (\Phi )_{F\left( X\right) }\left( T\right) =s_{F\left( X\right)
}^{\Phi }\left( T\right) \in Cl_{H_{2}}(\Phi (F\left( X\right) )),
\end{equation*}%
by (\ref{alpha}). So $\left( \Phi (F\left( X\right) ),s_{F\left( X\right)
}^{\Phi }\left( T\right) \right) \in \mathrm{Ob}\mathfrak{Cl}_{H_{2}}$.

We define%
\begin{equation*}
\Lambda _{\mathrm{Ob}}:\mathrm{Ob}\mathfrak{Cl}_{H_{1}}\rightarrow \mathrm{Ob%
}\mathfrak{Cl}_{H_{2}}
\end{equation*}%
by%
\begin{equation}
\Lambda _{\mathrm{Ob}}\left( F\left( X\right) ,T\right) =\left( \Phi
(F\left( X\right) ),s_{F\left( X\right) }^{\Phi }\left( T\right) \right) .
\label{delta_ob}
\end{equation}

Now we will define%
\begin{equation*}
\Lambda _{\mathrm{Mor}}:\mathrm{Mor}_{\mathfrak{Cl}_{H_{1}}}\left( \left(
F\left( X_{1}\right) ,T_{1}\right) ,\left( F\left( X_{2}\right)
,T_{2}\right) \right) \rightarrow 
\end{equation*}%
\begin{equation*}
\mathrm{Mor}_{\mathfrak{Cl}_{H_{2}}}\left( \Lambda _{\mathrm{Ob}}\left(
F\left( X_{1}\right) ,T_{1}\right) ,\Lambda _{\mathrm{Ob}}\left( F\left(
X_{2}\right) ,T_{2}\right) \right) 
\end{equation*}%
for every $\left( F\left( X_{1}\right) ,T_{1}\right) ,\left( F\left(
X_{2}\right) ,T_{2}\right) \in \mathrm{Ob}\mathfrak{Cl}_{H_{1}}$. If%
\begin{equation*}
\varphi \in \mathrm{Mor}_{\mathfrak{Cl}_{H_{1}}}\left( \left( F\left(
X_{1}\right) ,T_{1}\right) ,\left( F\left( X_{2}\right) ,T_{2}\right)
\right) 
\end{equation*}%
then $F\left( X_{1}\right) ,F\left( X_{2}\right) \in \mathrm{Ob}\Theta ^{0}$%
, $T_{1}\in Cl_{H_{1}}F\left( X_{1}\right) $, $T_{2}\in Cl_{H_{1}}F\left(
X_{2}\right) $, $\varphi \left( T_{1}\right) \subseteq T_{2}$ holds by
definition of $\mathfrak{Cl}_{H_{1}}$. $\Lambda _{\mathrm{Ob}}\left( F\left(
X_{i}\right) ,T_{i}\right) =\left( \Phi (F\left( X_{i}\right) ),s_{F\left(
X_{i}\right) }^{\Phi }\left( T_{i}\right) \right) $, $i=1,2$ by (\ref%
{delta_ob}). $s_{F\left( X_{i}\right) }^{\Phi }\left( T_{i}\right) \in
Cl_{H_{2}}\left( \Phi (F\left( X_{i}\right) )\right) $, $i=1,2$ as above.
Therefore, by Proposition \ref{prop_phi_phi_t_incl}, $\Phi (\varphi )\left(
s_{F\left( X_{1}\right) }^{\Phi }\left( T_{1}\right) \right) \subseteq
s_{F\left( X_{2}\right) }^{\Phi }\left( T_{2}\right) $. Hence,%
\begin{equation*}
\Phi \left( \varphi \right) \in \mathrm{Mor}_{\mathfrak{Cl}_{H_{2}}}\left(
\Lambda _{\mathrm{Ob}}\left( F\left( X_{1}\right) ,T_{1}\right) ,\Lambda _{%
\mathrm{Ob}}\left( F\left( X_{2}\right) ,T_{2}\right) \right) ,
\end{equation*}%
and we define%
\begin{equation}
\Lambda _{\mathrm{Mor}}\left( \varphi \right) =\Phi \left( \varphi \right) .
\label{lambda_mor}
\end{equation}

\begin{proposition}
\label{lambda_functor}$\Lambda :\mathfrak{Cl}_{H_{1}}\rightarrow \mathfrak{Cl%
}_{H_{2}}$ is a functor.
\end{proposition}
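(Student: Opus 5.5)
We need to show that $\Lambda$, defined by (\ref{delta_ob}) and (\ref{lambda_mor}), preserves identities and compositions. The mappings $\Lambda_{\mathrm{Ob}}$ and $\Lambda_{\mathrm{Mor}}$ are already well defined: the object part by (\ref{alpha}), and the fact that $\Lambda_{\mathrm{Mor}}(\varphi)$ lies in the correct $\mathrm{Mor}$-set was verified just before the statement via Proposition \ref{prop_phi_phi_t_incl}. So only the two functorial identities remain.

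The plan is to reduce everything to the fact that $\Phi$ is a functor $\Theta^{0}\rightarrow\Theta^{0}$, using that composition in $\mathfrak{Cl}_{H_{1}}$ and $\mathfrak{Cl}_{H_{2}}$ is composition of the underlying homomorphisms (Remark \ref{alg_syst}).

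First, take $(F(X),T)\in\mathrm{Ob}\mathfrak{Cl}_{H_{1}}$. Its identity morphism is $id_{F(X)}$, and
\[
\Lambda(id_{F(X)})=\Phi(id_{F(X)})=id_{\Phi(F(X))}
\]
because $\Phi$ is a functor. This is exactly the identity morphism of
\[
\Lambda_{\mathrm{Ob}}(F(X),T)=\bigl(\Phi(F(X)),s^{\Phi}_{F(X)}(T)\bigr)
\]
in $\mathfrak{Cl}_{H_{2}}$. Alternatively, one can use (\ref{autom_action}) directly: $s^{\Phi}_{F(X)}\,id\,(s^{\Phi}_{F(X)})^{-1}=id$.

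Second, take composable morphisms
\[
\varphi_{1}\in\mathrm{Mor}_{\mathfrak{Cl}_{H_{1}}}\bigl((F(X_{1}),T_{1}),(F(X_{2}),T_{2})\bigr),\qquad
\varphi_{2}\in\mathrm{Mor}_{\mathfrak{Cl}_{H_{1}}}\bigl((F(X_{2}),T_{2}),(F(X_{3}),T_{3})\bigr).
\]
Their composition $\varphi_{2}\varphi_{1}$ is a morphism from $(F(X_{1}),T_{1})$ to $(F(X_{3}),T_{3})$, and
\[
\Lambda(\varphi_{2}\varphi_{1})=\Phi(\varphi_{2}\varphi_{1})=\Phi(\varphi_{2})\Phi(\varphi_{1})=\Lambda(\varphi_{2})\Lambda(\varphi_{1}).
\]
The middle equality is functoriality of $\Phi$, or equivalently the cancellation of $(s^{\Phi}_{F(X_{2})})^{-1}s^{\Phi}_{F(X_{2})}$ in (\ref{autom_action}). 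The right-hand side is a composition in $\mathfrak{Cl}_{H_{2}}$ by Remark \ref{alg_syst}.

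No step is genuinely hard. The only point needing care is bookkeeping: each $\Phi(\varphi)$ must be regarded as a morphism between the specific objects $\Lambda_{\mathrm{Ob}}(F(X_{i}),T_{i})$, so that the compositions and identities above are taken in $\mathfrak{Cl}_{H_{2}}$ and not merely in $\Theta^{0}$. This was secured by the membership check made before the statement.
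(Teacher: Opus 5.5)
Your proposal is correct and matches the paper's proof: identities and composition are preserved because $\Lambda$ acts on morphisms as the functor $\Phi$ does, and composition and identities in $\mathfrak{Cl}_{H_{1}}$ and $\mathfrak{Cl}_{H_{2}}$ are those of the underlying homomorphisms. You also write the composition in the correct order $\varphi_{2}\varphi_{1}$, whereas the paper's displayed line $\Lambda(\varphi_{1})\Lambda(\varphi_{2})=\Phi(\varphi_{1})\Phi(\varphi_{2})$ has the factors reversed.
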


\begin{proof}
If $\left( F\left( X\right) ,T\right) \in \mathrm{Ob}\mathfrak{Cl}_{H_{1}}$
and $id_{F\left( X\right) }:F\left( X\right) \rightarrow F\left( X\right) $
identity morphism of this object, then $\Lambda _{\mathrm{Mor}}\left(
id_{F\left( X\right) }\right) =\Phi \left( id_{F\left( X\right) }\right)
=id_{\Phi \left( F\left( X\right) \right) }$ is an identity morphism of $%
\Lambda _{\mathrm{Ob}}\left( F\left( X\right) ,T\right) =\left( \Phi
(F\left( X\right) ),s_{F\left( X\right) }^{\Phi }\left( T\right) \right) $
by (\ref{delta_ob}) and (\ref{lambda_mor}).

Now we consider%
\begin{equation*}
\left( F\left( X_{1}\right) ,T_{1}\right) ,\left( F\left( X_{2}\right)
,T_{2}\right) ,\left( F\left( X_{3}\right) ,T_{3}\right) \in \mathrm{Ob}%
\mathfrak{Cl}_{H_{1}}
\end{equation*}%
and%
\begin{equation*}
\varphi _{1}\in \mathrm{Mor}_{\mathfrak{Cl}_{H_{1}}}\left( \left( F\left(
X_{1}\right) ,T_{1}\right) ,\left( F\left( X_{2}\right) ,T_{2}\right)
\right) ,
\end{equation*}%
\begin{equation*}
\varphi _{2}\in \mathrm{Mor}_{\mathfrak{Cl}_{H_{1}}}\left( \left( F\left(
X_{2}\right) ,T_{2}\right) ,\left( F\left( X_{3}\right) ,T_{3}\right)
\right) .
\end{equation*}%
By (\ref{lambda_mor}) we have that%
\begin{equation*}
\Lambda \left( \varphi _{1}\right) \Lambda \left( \varphi _{2}\right) =\Phi
\left( \varphi _{1}\right) \Phi \left( \varphi _{2}\right) =\Phi \left(
\varphi _{1}\varphi _{2}\right) =\Lambda \left( \varphi _{1}\varphi
_{2}\right) .
\end{equation*}%
It means that $\Lambda $ is a functor.
\end{proof}

\begin{proposition}
\label{left_comm}The functor $\Lambda $ closes the left diagram in
Definition \ref{new} commutative.
\end{proposition}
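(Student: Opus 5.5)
To prove the statement, I will check that $\mathcal{FG}_{H_{2}}\Lambda =\Phi \mathcal{FG}_{H_{1}}$ as functors $\mathfrak{Cl}_{H_{1}}\rightarrow \Theta ^{0}$. Two functors coincide when their object mappings and their morphism mappings coincide, so I will compare them separately on objects and on morphisms, using only the explicit definitions (\ref{delta_ob}), (\ref{lambda_mor}), (\ref{FG_ob}) and (\ref{FG_mor}).

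For objects, I take $\left( F\left( X\right) ,T\right) \in \mathrm{Ob}\mathfrak{Cl}_{H_{1}}$. By (\ref{delta_ob}), $\Lambda \left( F\left( X\right) ,T\right) =\left( \Phi (F\left( X\right) ),s_{F\left( X\right) }^{\Phi }\left( T\right) \right) $. By (\ref{FG_ob}) for $H_{2}$, we get $\mathcal{FG}_{H_{2}}\Lambda \left( F\left( X\right) ,T\right) =\Phi (F\left( X\right) )$. On the other side, $\mathcal{FG}_{H_{1}}\left( F\left( X\right) ,T\right) =F\left( X\right) $, and applying $\Phi $ gives $\Phi (F\left( X\right) )$ as well.

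For morphisms, I take $\varphi \in \mathrm{Mor}_{\mathfrak{Cl}_{H_{1}}}\left( \left( F\left( X_{1}\right) ,T_{1}\right) ,\left( F\left( X_{2}\right) ,T_{2}\right) \right) $. By (\ref{lambda_mor}), $\Lambda (\varphi )=\Phi (\varphi )$, regarded as a morphism of $\mathfrak{Cl}_{H_{2}}$. Then (\ref{FG_mor}) gives $\mathcal{FG}_{H_{2}}(\Phi (\varphi ))=\Phi (\varphi )$, now regarded as a morphism of $\Theta ^{0}$ from $\Phi (F\left( X_{1}\right) )$ to $\Phi (F\left( X_{2}\right) )$. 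On the other side, $\mathcal{FG}_{H_{1}}(\varphi )=\varphi \in \mathrm{Mor}_{\Theta ^{0}}\left( F\left( X_{1}\right) ,F\left( X_{2}\right) \right) $, so $\Phi \mathcal{FG}_{H_{1}}(\varphi )=\Phi (\varphi )$. The two sides agree, including their domains and codomains, which we already matched on objects.

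The only point needing care, and so the main (mild) obstacle, is bookkeeping. The same homomorphism $\varphi $ (respectively $\Phi (\varphi )$) appears both as a morphism of $\mathfrak{Cl}_{H_{i}}$ and as a morphism of $\Theta ^{0}$, and $\mathcal{FG}_{H_{i}}$ is the forgetful identity on underlying homomorphisms. I will state explicitly that the equality is one of homomorphisms between the same free algebras, as the paper does in the proof that $\mathcal{FG}_{H}$ is surjective. No choice of the bijections $s^{\Phi }$ enters this argument, so Remark \ref{alpha_unic} is not needed here.
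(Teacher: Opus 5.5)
Your proposal is correct and follows the paper's own proof: it is a direct check of $\mathcal{FG}_{H_{2}}\Lambda =\Phi \mathcal{FG}_{H_{1}}$, done separately on objects and on morphisms, by unwinding (\ref{delta_ob}), (\ref{lambda_mor}), (\ref{FG_ob}) and (\ref{FG_mor}). Your explicit remark that domains and codomains also match is a harmless clarification that the paper leaves implicit.
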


\begin{proof}
We have by (\ref{delta_ob}) and (\ref{FG_ob}) for every $\left( F\left(
X\right) ,T\right) \in \mathrm{Ob}\mathfrak{Cl}_{H_{1}}$, that%
\begin{equation*}
\mathcal{FG}_{H_{2}}\left( \Lambda \left( F\left( X\right) ,T\right) \right)
=\mathcal{FG}_{H_{2}}\left( \Phi (F\left( X\right) ),s_{F\left( X\right)
}^{\Phi }\left( T\right) \right) =\Phi (F\left( X\right) )
\end{equation*}%
and%
\begin{equation*}
\Phi \left( \mathcal{FG}_{H_{1}}\left( F\left( X\right) ,T\right) \right)
=\Phi (F\left( X\right) ).
\end{equation*}%
Also we have by (\ref{lambda_mor}) and (\ref{FG_mor}) for every%
\begin{equation*}
\varphi \in \mathrm{Mor}_{\mathfrak{Cl}_{H_{1}}}\left( \left( F\left(
X_{1}\right) ,T_{1}\right) ,\left( F\left( X_{2}\right) ,T_{2}\right)
\right) ,
\end{equation*}%
where $\left( F\left( X_{1}\right) ,T_{1}\right) ,\left( F\left(
X_{2}\right) ,T_{2}\right) \in \mathrm{Ob}\mathfrak{Cl}_{H_{1}}$, that%
\begin{equation*}
\mathcal{FG}_{H_{2}}\left( \Lambda \left( \varphi \right) \right) =\mathcal{%
FG}_{H_{2}}\left( \Phi \left( \varphi \right) \right) =\Phi \left( \varphi
\right) 
\end{equation*}%
and%
\begin{equation*}
\Phi \left( \mathcal{FG}_{H_{1}}\left( \varphi \right) \right) =\Phi \left(
\varphi \right) .
\end{equation*}
\end{proof}

\begin{proposition}
\label{lambda_isom}$\Lambda $ is an isomorphism of categories.
\end{proposition}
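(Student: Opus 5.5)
The plan is to construct an explicit inverse functor $\Lambda ^{\prime }:\mathfrak{Cl}_{H_{2}}\rightarrow \mathfrak{Cl}_{H_{1}}$ and check that both composites are identity functors. By Remark \ref{phi_inv_eq}, the algebras $H_{2},H_{1}$ are subjects of Definition \ref{old} with the automorphism $\Phi ^{-1}$. By (\ref{phi_inv_bij}), the bijections $s_{F\left( X\right) }^{\Phi ^{-1}}=\left( s_{\Phi ^{-1}\left( F\left( X\right) \right) }^{\Phi }\right) ^{-1}$ fulfill (\ref{autom_action}) for $\Phi ^{-1}$. Hence, by (\ref{alpha}) applied to the pair $H_{2},H_{1}$, we get $\alpha (\Phi ^{-1})_{F\left( X\right) }\left( T\right) =s_{F\left( X\right) }^{\Phi ^{-1}}\left( T\right) \in Cl_{H_{1}}\left( \Phi ^{-1}\left( F\left( X\right) \right) \right) $ for every $T\in Cl_{H_{2}}\left( F\left( X\right) \right) $. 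Remark \ref{alpha_unic} guarantees that this does not depend on the choice of the system of bijections. The construction of Subsection \ref{functor_lambda}, repeated verbatim with $H_{1},H_{2},\Phi $ replaced by $H_{2},H_{1},\Phi ^{-1}$, then gives a functor $\Lambda ^{\prime }$ (Proposition \ref{lambda_functor}). It is defined by $\Lambda ^{\prime }\left( F\left( X\right) ,T\right) =\left( \Phi ^{-1}\left( F\left( X\right) \right) ,s_{F\left( X\right) }^{\Phi ^{-1}}\left( T\right) \right) $ and $\Lambda ^{\prime }\left( \varphi \right) =\Phi ^{-1}\left( \varphi \right) $.

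Next I check $\Lambda ^{\prime }\Lambda =id_{\mathfrak{Cl}_{H_{1}}}$. On morphisms this is immediate: $\Lambda ^{\prime }\Lambda \left( \varphi \right) =\Phi ^{-1}\Phi \left( \varphi \right) =\varphi $. On objects, for $\left( F\left( X\right) ,T\right) $ we obtain $\left( \Phi ^{-1}\Phi \left( F\left( X\right) \right) ,s_{\Phi \left( F\left( X\right) \right) }^{\Phi ^{-1}}s_{F\left( X\right) }^{\Phi }\left( T\right) \right) $. By (\ref{phi_inv_bij}), $s_{\Phi \left( F\left( X\right) \right) }^{\Phi ^{-1}}=\left( s_{F\left( X\right) }^{\Phi }\right) ^{-1}$, so the second component is $T$, and the composite is the identity on objects.

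For $\Lambda \Lambda ^{\prime }=id_{\mathfrak{Cl}_{H_{2}}}$, the morphism part is again trivial. On objects we get $\left( F\left( X\right) ,s_{\Phi ^{-1}\left( F\left( X\right) \right) }^{\Phi }s_{F\left( X\right) }^{\Phi ^{-1}}\left( T\right) \right) $, and this equals $\left( F\left( X\right) ,T\right) $ directly by (\ref{phi_inv_bij}).

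I expect the main obstacle to be the well-definedness point in the first paragraph. We must be sure that (\ref{alpha}) legitimately applies to $\Phi ^{-1}$ with precisely the bijections (\ref{phi_inv_bij}), so that $\Lambda ^{\prime }$ lands in $\mathfrak{Cl}_{H_{1}}$ and is built from the same $s^{\Phi }$ used for $\Lambda $. This rests on Remark \ref{phi_inv_eq} together with the freedom of choice of bijections given by Remark \ref{alpha_unic}. Once this is settled, the rest is bookkeeping.
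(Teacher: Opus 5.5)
Your proposal is correct and follows the paper's proof essentially verbatim. As in the paper, you build $\Lambda ^{\prime }$ from $\Phi ^{-1}$ via Remark \ref{phi_inv_eq} and (\ref{phi_inv_bij}), then check on objects and morphisms that $\Lambda ^{\prime }\Lambda =id_{\mathfrak{Cl}_{H_{1}}}$ and $\Lambda \Lambda ^{\prime }=id_{\mathfrak{Cl}_{H_{2}}}$; the only cosmetic differences are that you verify the second composite directly rather than by ``symmetrical reasoning'' and spell out why $\Lambda ^{\prime }$ is well defined.
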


\begin{proof}
If the automorphism $\Phi :\Theta ^{0}\rightarrow \Theta ^{0}$ provides the
automorphic equivalence of algebras $H_{1}$ and $H_{2}$, then the
automorphism $\Phi ^{-1}:\Theta ^{0}\rightarrow \Theta ^{0}$ provides the
automorphic equivalence of algebras $H_{2}$ and $H_{1}$ by Remark \ref%
{phi_inv_eq}. So, there exists a functor $\Lambda ^{\prime }$ which is also
defined by (\ref{delta_ob}) and (\ref{lambda_mor}) with respect to the
automorphism $\Phi ^{-1}$. We have, by (\ref{delta_ob}) and (\ref%
{phi_inv_bij}), that 
\begin{equation*}
\Lambda ^{\prime }\Lambda \left( F\left( X\right) ,T\right) =\Lambda
^{\prime }\left( \Phi (F\left( X\right) ),s_{F\left( X\right) }^{\Phi
}\left( T\right) \right) =
\end{equation*}%
\begin{equation*}
\left( \Phi ^{-1}\Phi (F\left( X\right) ),s_{\Phi (F\left( X\right) )}^{\Phi
^{-1}}\left( s_{F\left( X\right) }^{\Phi }\left( T\right) \right) \right) =
\end{equation*}%
\begin{equation*}
\left( F\left( X\right) ,\left( s_{\Phi ^{-1}\left( \Phi (F\left( X\right)
)\right) }^{\Phi }\right) ^{-1}\left( s_{F\left( X\right) }^{\Phi }\left(
T\right) \right) \right) =\left( F\left( X\right) ,T\right) 
\end{equation*}%
for every $\left( F\left( X\right) ,T\right) \in \mathrm{Ob}\mathfrak{Cl}%
_{H_{1}}$. Also we have for every%
\begin{equation*}
\varphi \in \mathrm{Mor}_{\mathfrak{Cl}_{H_{1}}}\left( \left( F\left(
X_{1}\right) ,T_{1}\right) ,\left( F\left( X_{2}\right) ,T_{2}\right)
\right) ,
\end{equation*}%
where $\left( F\left( X_{1}\right) ,T_{1}\right) ,\left( F\left(
X_{2}\right) ,T_{2}\right) \in \mathrm{Ob}\mathfrak{Cl}_{H_{1}}$, that%
\begin{equation*}
\Lambda ^{\prime }\Lambda \left( \varphi \right) =\Phi ^{-1}\Phi \left(
\varphi \right) =\varphi 
\end{equation*}%
by (\ref{lambda_mor}). So%
\begin{equation}
\Lambda ^{\prime }\Lambda =id_{\mathfrak{Cl}_{H_{1}}}.  \label{lambda_inv_1}
\end{equation}%
Symmetrical reasoning gives us that%
\begin{equation}
\Lambda \Lambda ^{\prime }=id_{\mathfrak{Cl}_{H_{2}}}.  \label{lambda_inv_2}
\end{equation}%
Therefore $\Lambda $ is an isomorphism of categories.
\end{proof}

\subsubsection{The functor $\Psi $}

Every object of the category $\mathfrak{Cor}_{H_{1}}$ has form $F\left(
X\right) /T$, where $F\left( X\right) \in \mathrm{Ob}\Theta ^{0}$, $T\in
Cl_{H_{1}}F\left( X\right) $. Hence, as remarked in the beginning of
Subsection \ref{functor_lambda},%
\begin{equation*}
\alpha (\Phi )_{F\left( X\right) }\left( T\right) =s_{F\left( X\right)
}\left( T\right) \in Cl_{H_{2}}\left( \Phi \left( F\left( X\right) \right)
\right) .
\end{equation*}%
So $\Phi \left( F\left( X\right) \right) /s_{F\left( X\right) }^{\Phi
}\left( T\right) \in \mathrm{Ob}\mathfrak{Cor}_{H_{2}}$. We define%
\begin{equation*}
\Psi _{\mathrm{Ob}}:\mathrm{Ob}\mathfrak{Cor}_{H_{1}}\rightarrow \mathrm{Ob}%
\mathfrak{Cor}_{H_{2}}
\end{equation*}%
by%
\begin{equation}
\Psi _{\mathrm{Ob}}\left( F\left( X\right) /T\right) =\Phi \left( F\left(
X\right) \right) /s_{F\left( X\right) }^{\Phi }\left( T\right) .
\label{psi_ob}
\end{equation}%
Now we will define%
\begin{equation*}
\Psi _{\mathrm{Mor}}:\mathrm{Mor}_{\mathfrak{Cor}_{H_{1}}}\left( F\left(
X_{1}\right) /T_{1},F\left( X_{2}\right) /T_{2}\right) \rightarrow 
\end{equation*}%
\begin{equation*}
\mathrm{Mor}_{\mathfrak{Cor}_{H_{2}}}\left( \Psi _{\mathrm{Ob}}\left(
F\left( X_{1}\right) /T_{1}\right) ,\Psi _{\mathrm{Ob}}\left( F\left(
X_{2}\right) /T_{2}\right) \right) 
\end{equation*}%
for every $F\left( X_{1}\right) /T_{1},F\left( X_{2}\right) /T_{2}\in 
\mathrm{Ob}\mathfrak{Cor}_{H_{1}}$.

To this end, we must prove the following

\begin{proposition}
\label{psi_mor_prop}We consider $F\left( X_{1}\right) /T_{1},F\left(
X_{2}\right) /T_{2}\in \mathrm{Ob}\mathfrak{Cor}_{H_{1}}$ and a homomorphism 
$\varphi :F\left( X_{1}\right) /T_{1}\rightarrow F\left( X_{2}\right) /T_{2}$%
. There is only one homomorphism $\nu =\left( \Phi \left( \mu \right) ,%
\begin{array}{c}
\left( \Phi \left( F\left( X_{1}\right) \right) ,s_{F\left( X_{1}\right)
}^{\Phi }\left( T_{1}\right) \right) \\ 
\left( \Phi \left( F\left( X_{2}\right) \right) ,s_{F\left( X_{2}\right)
}^{\Phi }\left( T_{2}\right) \right)%
\end{array}%
\right) $ for every $\mu \in \left( 
\begin{array}{c}
\left( F\left( X_{1}\right) ,T_{1}\right) \\ 
\left( F\left( X_{2}\right) ,T_{2}\right)%
\end{array}%
,\varphi \right) $.
\end{proposition}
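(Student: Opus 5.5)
The plan has two parts. First, for each fixed $\mu$ the set $\left( \Phi \left( \mu \right) ,\begin{array}{c}\left( \Phi \left( F\left( X_{1}\right) \right) ,s_{F\left( X_{1}\right) }^{\Phi }\left( T_{1}\right) \right) \\ \left( \Phi \left( F\left( X_{2}\right) \right) ,s_{F\left( X_{2}\right) }^{\Phi }\left( T_{2}\right) \right) \end{array}\right)$ has exactly one element. Second, this element does not depend on which $\mu \in \left( \begin{array}{c}\left( F\left( X_{1}\right) ,T_{1}\right) \\ \left( F\left( X_{2}\right) ,T_{2}\right) \end{array},\varphi \right)$ we choose. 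Note also that this set of $\mu$ is nonempty by Proposition \ref{projective}, so $\nu$ is actually defined for the given $\varphi$.

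For the first part, take any such $\mu$. By Proposition \ref{squear_prop}, item 2), $\mu \left( T_{1}\right) \subseteq T_{2}$. Then Proposition \ref{prop_phi_phi_t_incl} gives $\Phi \left( \mu \right) \left( s_{F\left( X_{1}\right) }^{\Phi }\left( T_{1}\right) \right) \subseteq s_{F\left( X_{2}\right) }^{\Phi }\left( T_{2}\right)$. The sets $s_{F\left( X_{i}\right) }^{\Phi }\left( T_{i}\right) =\alpha (\Phi )_{F\left( X_{i}\right) }\left( T_{i}\right)$ are $H_{2}$-closed by (\ref{alpha}), hence congruences. So Proposition \ref{squear_prop}, items 1) and 3), applied to $\Phi \left( \mu \right)$ give a unique homomorphism $\nu _{\mu }$ closing the square.

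The second part is the real content, and here I would use the coordination condition of Definition \ref{old}. Let $\mu _{1},\mu _{2}$ both lie in the set above. Write $\tau :F\left( X_{2}\right) \rightarrow F\left( X_{2}\right) /T_{2}$ and $\tau _{1}:F\left( X_{1}\right) \rightarrow F\left( X_{1}\right) /T_{1}$. Then
\begin{equation*}
\tau \mu _{1}=\varphi \tau _{1}=\tau \mu _{2}.
\end{equation*}
Since $T_{2}\in Cl_{H_{1}}\left( F\left( X_{2}\right) \right)$, the implication (\ref{old_1}) $\Rightarrow$ (\ref{old_2}) applies. Let $\widetilde{\tau }:\Phi \left( F\left( X_{2}\right) \right) \rightarrow \Phi \left( F\left( X_{2}\right) \right) /\alpha (\Phi )_{F\left( X_{2}\right) }\left( T_{2}\right)$, where $\alpha (\Phi )_{F\left( X_{2}\right) }\left( T_{2}\right) =s_{F\left( X_{2}\right) }^{\Phi }\left( T_{2}\right)$ by (\ref{alpha}). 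The implication gives $\widetilde{\tau }\Phi \left( \mu _{1}\right) =\widetilde{\tau }\Phi \left( \mu _{2}\right)$.

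Now let $\widetilde{\tau }_{1}$ be the natural epimorphism of $\Phi \left( F\left( X_{1}\right) \right)$ onto its quotient by $s_{F\left( X_{1}\right) }^{\Phi }\left( T_{1}\right)$. Then
\begin{equation*}
\nu _{\mu _{1}}\widetilde{\tau }_{1}=\widetilde{\tau }\Phi \left( \mu _{1}\right) =\widetilde{\tau }\Phi \left( \mu _{2}\right) =\nu _{\mu _{2}}\widetilde{\tau }_{1}.
\end{equation*}
Since $\widetilde{\tau }_{1}$ is surjective, Remark \ref{epi} gives $\nu _{\mu _{1}}=\nu _{\mu _{2}}$.

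The main obstacle is bookkeeping rather than new ideas. One must check that Definition \ref{old} is applied with the right congruence, namely $T_{2}$ on the codomain $F\left( X_{2}\right)$. One must also check that the quotient used there coincides, via (\ref{alpha}), with the one in the square defining $\nu$; this is exactly what lets us identify $\widetilde{\tau }$ with the bottom arrow of that square.
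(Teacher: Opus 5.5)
Your proposal is correct and follows the paper's proof essentially step for step. You obtain existence and uniqueness of $\nu_{\mu}$ from Proposition \ref{squear_prop} (items 2, 1, 3) together with Proposition \ref{prop_phi_phi_t_incl}. You then get independence of the choice of $\mu$ from the implication (\ref{old_1})$\Rightarrow$(\ref{old_2}) of Definition \ref{old} with $T=T_{2}$, and cancel the epimorphism $\widetilde{\tau}_{1}$ by Remark \ref{epi}. Your two extra remarks only make explicit what the paper uses tacitly: the set of such $\mu$ is nonempty by Proposition \ref{projective}, and the sets $s_{F(X_{i})}^{\Phi}(T_{i})$ are congruences because they are $H_{2}$-closed by (\ref{alpha}).
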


\begin{proof}
If $\mu \in \left( 
\begin{array}{c}
\left( F\left( X_{1}\right) ,T_{1}\right) \\ 
\left( F\left( X_{2}\right) ,T_{2}\right)%
\end{array}%
,\varphi \right) $, then $\mu \left( T_{1}\right) \subseteq T_{2}$ holds by
Proposition \ref{squear_prop}, item 2). So $\Phi \left( \mu \right) \left(
s_{F\left( X_{1}\right) }^{\Phi }\left( T_{1}\right) \right) \subseteq
s_{F\left( X_{2}\right) }^{\Phi }\left( T_{2}\right) $ holds by Proposition %
\ref{prop_phi_phi_t_incl}. So%
\begin{equation}
\left( \Phi \left( \mu \right) ,%
\begin{array}{c}
\left( \Phi \left( F\left( X_{1}\right) \right) ,s_{F\left( X_{1}\right)
}^{\Phi }\left( T_{1}\right) \right) \\ 
\left( \Phi \left( F\left( X_{2}\right) \right) ,s_{F\left( X_{2}\right)
}^{\Phi }\left( T_{2}\right) \right)%
\end{array}%
\right) =\nu _{\mu }  \label{nu_mu}
\end{equation}%
by Proposition \ref{squear_prop}, item 1) and 3).

We denote by $\tau _{i}:F\left( X_{i}\right) \rightarrow F\left(
X_{i}\right) /T_{i}$ and\linebreak $\widetilde{\tau }_{i}:\Phi \left(
F\left( X_{i}\right) \right) \rightarrow \Phi \left( F\left( X_{i}\right)
\right) /s_{F\left( X_{i}\right) }^{\Phi }\left( T_{i}\right) $ the natural
epimorphisms, where $i=1,2$. If $\mu _{i}\in \left( 
\begin{array}{c}
\left( F\left( X_{1}\right) ,T_{1}\right)  \\ 
\left( F\left( X_{2}\right) ,T_{2}\right) 
\end{array}%
,\varphi \right) $, where $i=1,2$, then $\tau _{2}\mu _{i}=\varphi \tau _{1}$%
. So $\tau _{2}\mu _{1}$ $=\tau _{2}\mu _{2}$. Therefore $\widetilde{\tau }%
_{2}\Phi \left( \mu _{1}\right) =\widetilde{\tau }_{2}\Phi \left( \mu
_{2}\right) $ holds by Definition \ref{old}. $\nu _{\mu _{i}}\widetilde{\tau 
}_{1}=\widetilde{\tau }_{2}\Phi \left( \mu _{i}\right) $, where $i=1,2$,
hold by \ref{nu_mu}. Hence $\nu _{\mu _{1}}\widetilde{\tau }_{1}=\nu _{\mu
_{2}}\widetilde{\tau }_{1}$, so $\nu _{\mu _{1}}=\nu _{\mu _{2}}$ by Remark %
\ref{epi}.
\end{proof}

$\Phi \left( F\left( X_{i}\right) \right) /s_{F\left( X_{i}\right) }^{\Phi
}\left( T_{i}\right) =\Psi _{\mathrm{Ob}}\left( F\left( X_{i}\right)
/T_{i}\right) $, $i=1,2$, by (\ref{psi_ob}). So%
\begin{equation*}
\nu =\left( \Phi \left( \mu \right) ,%
\begin{array}{c}
\left( \Phi \left( F\left( X_{1}\right) \right) ,s_{F\left( X_{1}\right)
}^{\Phi }\left( T_{1}\right) \right)  \\ 
\left( \Phi \left( F\left( X_{2}\right) \right) ,s_{F\left( X_{2}\right)
}^{\Phi }\left( T_{2}\right) \right) 
\end{array}%
\right) \in 
\end{equation*}%
\begin{equation*}
\mathrm{Mor}_{\mathfrak{Cor}_{H_{2}}}\left( \Psi _{\mathrm{Ob}}\left(
F\left( X_{1}\right) /T_{1}\right) ,\Psi _{\mathrm{Ob}}\left( F\left(
X_{2}\right) /T_{2}\right) \right) .
\end{equation*}%
And we define%
\begin{equation}
\Psi _{\mathrm{Mor}}\left( \varphi \right) =\left( \Phi \left( \mu \right) ,%
\begin{array}{c}
\left( \Phi \left( F\left( X_{1}\right) \right) ,s_{F\left( X_{1}\right)
}^{\Phi }\left( T_{1}\right) \right)  \\ 
\left( \Phi \left( F\left( X_{2}\right) \right) ,s_{F\left( X_{2}\right)
}^{\Phi }\left( T_{2}\right) \right) 
\end{array}%
\right)   \label{psi_mor_in}
\end{equation}%
for every $\mu \in \left( 
\begin{array}{c}
\left( F\left( X_{1}\right) ,T_{1}\right)  \\ 
\left( F\left( X_{2}\right) ,T_{2}\right) 
\end{array}%
,\varphi \right) $. In particular it means that%
\begin{equation}
\widetilde{\tau }_{2}\Phi \left( \mu \right) =\Psi _{\mathrm{Mor}}\left(
\varphi \right) \widetilde{\tau }_{1},  \label{psi_mor}
\end{equation}%
where $\widetilde{\tau }_{i}:\Phi \left( F\left( X_{i}\right) \right)
\rightarrow \Phi \left( F\left( X_{i}\right) \right) /s_{F\left(
X_{i}\right) }^{\Phi }\left( T_{i}\right) $, $i=1,2$, are natural
epimorphisms.

\begin{proposition}
\label{psi_functor}$\Psi $ is a functor.
\end{proposition}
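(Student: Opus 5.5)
We have to check that $\Psi$ sends identities to identities and respects composition. In both cases we choose lifts through the free algebras, push them through $\Phi$, and then use the uniqueness statement in Proposition \ref{squear_prop}, item 3). Proposition \ref{psi_mor_prop} guarantees that $\Psi_{\mathrm{Mor}}\left( \varphi \right)$ does not depend on which lift $\mu$ we pick, so we are free to choose convenient lifts.

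\textbf{Identities.} Take $F\left( X\right) /T\in \mathrm{Ob}\mathfrak{Cor}_{H_{1}}$ and $\varphi =id_{F\left( X\right) /T}$. Then $id_{F\left( X\right) }$ lies in the corresponding set of lifts of $\varphi $, since $\tau id_{F\left( X\right) }=id_{F\left( X\right) /T}\tau $. By (\ref{psi_mor_in}), $\Psi _{\mathrm{Mor}}\left( id_{F\left( X\right) /T}\right) $ is the unique homomorphism associated with $\Phi \left( id_{F\left( X\right) }\right) $ and the pairs $\left( \Phi \left( F\left( X\right) \right) ,s_{F\left( X\right) }^{\Phi }\left( T\right) \right)$ at top and bottom. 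Since $\Phi \left( id_{F\left( X\right) }\right) =id_{\Phi \left( F\left( X\right) \right) }$, Proposition \ref{squear_prop}, item 4), shows this homomorphism is $id_{\Phi \left( F\left( X\right) \right) /s_{F\left( X\right) }^{\Phi }\left( T\right) }$. This is the identity of $\Psi _{\mathrm{Ob}}\left( F\left( X\right) /T\right) $ by (\ref{psi_ob}).

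\textbf{Composition.} Take homomorphisms $\varphi _{1}:F\left( X_{1}\right) /T_{1}\rightarrow F\left( X_{2}\right) /T_{2}$ and $\varphi _{2}:F\left( X_{2}\right) /T_{2}\rightarrow F\left( X_{3}\right) /T_{3}$. By Proposition \ref{projective}, choose lifts $\mu _{i}\in \left(
\begin{array}{c}
\left( F\left( X_{i}\right) ,T_{i}\right) \\
\left( F\left( X_{i+1}\right) ,T_{i+1}\right)
\end{array}
,\varphi _{i}\right) $. By Remark \ref{nn_trans}, $\mu _{2}\mu _{1}$ is a lift of $\varphi _{2}\varphi _{1}$, so by (\ref{psi_mor_in}) $\Psi \left( \varphi _{2}\varphi _{1}\right) $ is the unique homomorphism associated with $\Phi \left( \mu _{2}\mu _{1}\right) =\Phi \left( \mu _{2}\right) \Phi \left( \mu _{1}\right) $. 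On the other hand, (\ref{psi_mor}) gives $\Psi \left( \varphi _{i}\right) \in \left( \Phi \left( \mu _{i}\right) ,\ldots \right) $ for the pairs $\left( \Phi \left( F\left( X_{i}\right) \right) ,s_{F\left( X_{i}\right) }^{\Phi }\left( T_{i}\right) \right) $. The first part of Remark \ref{nn_trans} then gives
\[
\Psi \left( \varphi _{2}\right) \Psi \left( \varphi _{1}\right) \in \left( \Phi \left( \mu _{2}\right) \Phi \left( \mu _{1}\right) ,\ldots \right) .
\]
By Proposition \ref{prop_phi_phi_t_incl} this set is a singleton (Proposition \ref{squear_prop}, items 1) and 3)), so it equals $\{\Psi \left( \varphi _{2}\varphi _{1}\right)\}$. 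Hence $\Psi \left( \varphi _{2}\varphi _{1}\right) =\Psi \left( \varphi _{2}\right) \Psi \left( \varphi _{1}\right) $.

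The only step needing care is that $\Psi _{\mathrm{Mor}}$ is well defined independently of the chosen lifts. This is exactly Proposition \ref{psi_mor_prop}, which in turn rests on Definition \ref{old}. Once that is in hand, the rest is bookkeeping with the uniqueness in Proposition \ref{squear_prop}.
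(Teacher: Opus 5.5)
Your proof is correct and follows the paper's argument essentially step for step. For identities you take the lift $id_{F(X)}$ and use Proposition \ref{squear_prop}, item 4), where the paper checks the square directly; for composition you take lifts from Proposition \ref{projective}, use Remark \ref{nn_trans} twice, and finish with uniqueness, and your explicit singleton justification via Proposition \ref{prop_phi_phi_t_incl} spells out a step the paper leaves implicit.
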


\begin{proof}
We consider the object $F\left( X\right) /T\in \mathrm{Ob}\mathfrak{Cor}%
_{H_{1}}$ and its identity morphism $id_{F\left( X\right) /T}$. The diagram%
\begin{equation*}
\begin{array}{ccc}
F\left( X\right)  & \overrightarrow{\tau } & F\left( X\right) /T \\ 
\downarrow id_{F\left( X\right) } &  & id_{F\left( X\right) /T}\downarrow 
\\ 
F\left( X\right)  & \underrightarrow{\tau } & F\left( X\right) /T%
\end{array}%
,
\end{equation*}%
where $\tau $ is a natural epimorphisms, can be closed commutative by
homomorphism $id_{F\left( X\right) }$, because $\tau id_{F\left( X\right)
}=\tau =id_{F\left( X\right) /T}\tau $, so\linebreak $id_{F\left( X\right)
}\in \left( 
\begin{array}{c}
\left( F\left( X\right) ,T\right)  \\ 
\left( F\left( X\right) ,T\right) 
\end{array}%
,id_{F\left( X\right) /T}\right) $. Also the diagram%
\begin{equation*}
\begin{array}{ccc}
\Phi \left( F\left( X\right) \right)  & \overrightarrow{\widetilde{\tau }} & 
\Phi \left( F\left( X\right) \right) /s_{F\left( X\right) }^{\Phi }\left(
T\right)  \\ 
\Phi \left( id_{F\left( X\right) }\right) \downarrow  &  & \downarrow
id_{\Phi \left( F\left( X\right) \right) /s_{F\left( X\right) }^{\Phi
}\left( T\right) } \\ 
\Phi \left( F\left( X\right) \right)  & \overrightarrow{\widetilde{\tau }} & 
\Phi \left( F\left( X\right) \right) /s_{F\left( X\right) }^{\Phi }\left(
T\right) 
\end{array}%
,
\end{equation*}%
where $\widetilde{\tau }$ is a natural epimorphisms commutative. So%
\begin{equation*}
id_{\Psi _{\mathrm{Ob}}\left( F\left( X\right) /T\right) }=id_{\Phi \left(
F\left( X\right) \right) /s_{F\left( X\right) }^{\Phi }\left( T\right) }=
\end{equation*}%
\begin{equation*}
\left( \Phi \left( id_{F\left( X\right) }\right) ,%
\begin{array}{c}
\left( \Phi \left( F\left( X\right) \right) ,s_{F\left( X\right) }^{\Phi
}\left( T\right) \right)  \\ 
\left( \Phi \left( F\left( X\right) \right) ,s_{F\left( X\right) }^{\Phi
}\left( T\right) \right) 
\end{array}%
\right) =\Psi _{\mathrm{Mor}}\left( id_{F\left( X\right) /T}\right) 
\end{equation*}%
by (\ref{psi_ob}) and (\ref{psi_mor_in}).

Now we consider $F\left( X_{i}\right) /T_{i}\in \mathrm{Ob}\mathfrak{Cor}%
_{H_{1}}$, $i=1,2,3$, and%
\begin{equation*}
\varphi _{i}\in \mathrm{Mor}_{\mathfrak{Cor}_{H_{1}}}\left( F\left(
X_{i}\right) /T_{i},F\left( X_{i+1}\right) /T_{i+1}\right) ,
\end{equation*}%
$i=1,2$. By Proposition \ref{projective} there exist $\mu _{i}\in \left( 
\begin{array}{c}
\left( F\left( X_{i}\right) ,T_{i}\right)  \\ 
\left( F\left( X_{i+1}\right) ,T_{i+1}\right) 
\end{array}%
,\varphi _{i}\right) $, where $i=1,2$. Therefore, $\mu _{2}\mu _{1}\in
\left( 
\begin{array}{c}
\left( F\left( X_{1}\right) ,T_{1}\right)  \\ 
\left( F\left( X_{3}\right) ,T_{3}\right) 
\end{array}%
,\varphi _{2}\varphi _{1}\right) $ by Remark \ref{nn_trans}. So,%
\begin{equation*}
\Psi _{\mathrm{Mor}}\left( \varphi _{2}\varphi _{1}\right) =\left( \Phi
\left( \mu _{2}\mu _{1}\right) ,%
\begin{array}{c}
\left( \Phi \left( F\left( X_{1}\right) \right) ,s_{F\left( X_{1}\right)
}^{\Phi }\left( T_{1}\right) \right)  \\ 
\left( \Phi \left( F\left( X_{3}\right) \right) ,s_{F\left( X_{3}\right)
}^{\Phi }\left( T_{3}\right) \right) 
\end{array}%
\right) 
\end{equation*}%
by (\ref{psi_mor_in}). Also%
\begin{equation*}
\Psi _{\mathrm{Mor}}\left( \varphi _{i}\right) =\left( \Phi \left( \mu
_{i}\right) ,%
\begin{array}{c}
\left( \Phi \left( F\left( X_{i}\right) \right) ,s_{F\left( X_{i}\right)
}^{\Phi }\left( T_{i}\right) \right)  \\ 
\left( \Phi \left( F\left( X_{i+1}\right) \right) ,s_{F\left( X_{i+1}\right)
}^{\Phi }\left( T_{i+1}\right) \right) 
\end{array}%
\right) ,
\end{equation*}%
where $i=1,2$, by (\ref{psi_mor_in}). Therefore,%
\begin{equation*}
\Psi _{\mathrm{Mor}}\left( \varphi _{2}\right) \Psi _{\mathrm{Mor}}\left(
\varphi _{1}\right) \in \left( \Phi \left( \mu _{2}\right) \Phi \left( \mu
_{1}\right) ,%
\begin{array}{c}
\left( \Phi \left( F\left( X_{1}\right) \right) ,s_{F\left( X_{1}\right)
}^{\Phi }\left( T_{1}\right) \right)  \\ 
\left( \Phi \left( F\left( X_{3}\right) \right) ,s_{F\left( X_{i+1}\right)
}^{\Phi }\left( T_{3}\right) \right) 
\end{array}%
\right) =
\end{equation*}%
\begin{equation*}
\left( \Phi \left( \mu _{2}\mu _{1}\right) ,%
\begin{array}{c}
\left( \Phi \left( F\left( X_{1}\right) \right) ,s_{F\left( X_{1}\right)
}^{\Phi }\left( T_{1}\right) \right)  \\ 
\left( \Phi \left( F\left( X_{3}\right) \right) ,s_{F\left( X_{i+1}\right)
}^{\Phi }\left( T_{3}\right) \right) 
\end{array}%
\right) 
\end{equation*}%
also by Remark \ref{nn_trans}. Hence $\Psi _{\mathrm{Mor}}\left( \varphi
_{2}\varphi _{1}\right) =\Psi _{\mathrm{Mor}}\left( \varphi _{2}\right) \Psi
_{\mathrm{Mor}}\left( \varphi _{1}\right) $.
\end{proof}

\begin{proposition}
\label{right_comm}The functor $\Psi $ closes the right diagram in Definition %
\ref{new} commutative.
\end{proposition}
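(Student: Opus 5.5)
We must show $\mathcal{FR}_{H_{2}}\Lambda =\Psi \mathcal{FR}_{H_{1}}$, checking equality separately on objects and on morphisms.

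\emph{Objects.} For $\left( F\left( X\right) ,T\right) \in \mathrm{Ob}\mathfrak{Cl}_{H_{1}}$, formula (\ref{delta_ob}) and then (\ref{FR_ob}) give
\begin{equation*}
\mathcal{FR}_{H_{2}}\Lambda \left( F\left( X\right) ,T\right) =\Phi \left( F\left( X\right) \right) /s_{F\left( X\right) }^{\Phi }\left( T\right) .
\end{equation*}
On the other side, (\ref{FR_ob}) and then (\ref{psi_ob}) give
\begin{equation*}
\Psi \mathcal{FR}_{H_{1}}\left( F\left( X\right) ,T\right) =\Psi \left( F\left( X\right) /T\right) =\Phi \left( F\left( X\right) \right) /s_{F\left( X\right) }^{\Phi }\left( T\right) .
\end{equation*}
So the two functors agree on objects.

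\emph{Morphisms.} Take $\mu \in \mathrm{Mor}_{\mathfrak{Cl}_{H_{1}}}\left( \left( F\left( X_{1}\right) ,T_{1}\right) ,\left( F\left( X_{2}\right) ,T_{2}\right) \right) $ and put $\varphi =\mathcal{FR}_{H_{1}}\left( \mu \right) $. By (\ref{FR_mor_s}), $\varphi \in \left( \mu ,\begin{array}{c}\left( F\left( X_{1}\right) ,T_{1}\right) \\ \left( F\left( X_{2}\right) ,T_{2}\right) \end{array}\right) $. Remark \ref{nn_sym} then gives $\mu \in \left( \begin{array}{c}\left( F\left( X_{1}\right) ,T_{1}\right) \\ \left( F\left( X_{2}\right) ,T_{2}\right) \end{array},\varphi \right) $. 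This is the key step: $\mu $ itself is an admissible lift of $\varphi $. Proposition \ref{psi_mor_prop} shows that $\Psi _{\mathrm{Mor}}\left( \varphi \right) $ does not depend on the choice of lift, so by (\ref{psi_mor_in}) we may compute it with $\mu $:
\begin{equation*}
\Psi \left( \varphi \right) =\left( \Phi \left( \mu \right) ,\begin{array}{c}\left( \Phi \left( F\left( X_{1}\right) \right) ,s_{F\left( X_{1}\right) }^{\Phi }\left( T_{1}\right) \right) \\ \left( \Phi \left( F\left( X_{2}\right) \right) ,s_{F\left( X_{2}\right) }^{\Phi }\left( T_{2}\right) \right) \end{array}\right) .
\end{equation*}

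On the other side, $\Lambda \left( \mu \right) =\Phi \left( \mu \right) $ by (\ref{lambda_mor}). This is a morphism between $\Lambda \left( F\left( X_{i}\right) ,T_{i}\right) =\left( \Phi \left( F\left( X_{i}\right) \right) ,s_{F\left( X_{i}\right) }^{\Phi }\left( T_{i}\right) \right) $, $i=1,2$. Applying (\ref{FR_mor_s}) for $H_{2}$, $\mathcal{FR}_{H_{2}}\left( \Phi \left( \mu \right) \right) $ is exactly the unique homomorphism displayed above; its uniqueness is Proposition \ref{squear_prop}, item 3). Hence $\mathcal{FR}_{H_{2}}\Lambda \left( \mu \right) =\Psi \mathcal{FR}_{H_{1}}\left( \mu \right) $.

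The only point requiring care is the one already isolated: identifying $\mu $ as a valid lift of $\mathcal{FR}_{H_{1}}\left( \mu \right) $, so that the well-definedness in Proposition \ref{psi_mor_prop} lets us use $\mu $ in (\ref{psi_mor_in}). Everything else is direct unfolding of the definitions.
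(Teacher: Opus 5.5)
Your proof is correct and follows the paper's argument essentially step for step. On objects you unfold (\ref{delta_ob}), (\ref{FR_ob}), (\ref{psi_ob}); on morphisms you use Remark \ref{nn_sym} to see that $\mu$ is itself a lift of $\mathcal{FR}_{H_{1}}(\mu)$, so (\ref{psi_mor_in}) and (\ref{lambda_mor}) together with (\ref{FR_mor_s}) give the same homomorphism. The only difference is that the paper re-checks explicitly that $\Phi(\mu)$ is a morphism of $\mathfrak{Cl}_{H_{2}}$ between the right objects (via (\ref{alpha}) and Proposition \ref{prop_phi_phi_t_incl}), whereas you rightly take this as already established in the construction of $\Lambda$.
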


\begin{proof}
We consider $\left( F\left( X\right) ,T\right) \in \mathrm{Ob}\mathfrak{Cl}%
_{H_{1}}$. We have, by (\ref{delta_ob}), (\ref{FR_ob}) and (\ref{psi_ob}),
that%
\begin{equation*}
\mathcal{FR}_{H_{2}}\left( \Lambda \left( F\left( X\right) ,T\right) \right)
=\mathcal{FR}_{H_{2}}\left( \Phi (F\left( X\right) ),s_{F\left( X\right)
}^{\Phi }\left( T\right) \right) =\Phi (F\left( X\right) )/s_{F\left(
X\right) }^{\Phi }\left( T\right)
\end{equation*}%
and%
\begin{equation*}
\Psi \left( \mathcal{FR}_{H_{1}}\left( F\left( X\right) ,T\right) \right)
=\Psi \left( F\left( X\right) /T\right) =\Phi (F\left( X\right) )/s_{F\left(
X\right) }^{\Phi }\left( T\right) .
\end{equation*}

Now we consider $\left( F\left( X_{i}\right) ,T_{i}\right) \in \mathrm{Ob}%
\mathfrak{Cl}_{H_{1}}$, $i=1,2$, and%
\begin{equation*}
\mu \in \mathrm{Mor}_{\mathfrak{Cl}_{H_{1}}}\left( \left( F\left(
X_{1}\right) ,T_{1}\right) ,\left( F\left( X_{2}\right) ,T_{2}\right)
\right) .
\end{equation*}%
It means that $T_{i}\in Cl_{H_{1}}\left( F\left( X_{i}\right) \right) $, $%
\mu \in \mathrm{Hom}\left( F\left( X_{1}\right) ,F\left( X_{2}\right)
\right) $ and $\mu \left( T_{1}\right) \subseteq T_{2}$. We have by (\ref%
{FR_mor_s}) that%
\begin{equation*}
\mathcal{FR}_{H_{1}}\left( \mu \right) =\left( \mu ,%
\begin{array}{c}
\left( F\left( X_{1}\right) ,T_{1}\right)  \\ 
\left( F\left( X_{2}\right) ,T_{2}\right) 
\end{array}%
\right) .
\end{equation*}%
By Remark \ref{nn_sym}, we have that $\mu \in \left( 
\begin{array}{c}
\left( F\left( X_{1}\right) ,T_{1}\right)  \\ 
\left( F\left( X_{2}\right) ,T_{2}\right) 
\end{array}%
,\mathcal{FR}_{H_{1}}\left( \mu \right) \right) $. So,%
\begin{equation*}
\Psi \left( \mathcal{FR}_{H_{1}}\left( \mu \right) \right) =\left( \Phi
\left( \mu \right) ,%
\begin{array}{c}
\left( \Phi \left( F\left( X_{1}\right) \right) ,s_{F\left( X_{1}\right)
}^{\Phi }\left( T_{1}\right) \right)  \\ 
\left( \Phi \left( F\left( X_{2}\right) \right) ,s_{F\left( X_{2}\right)
}^{\Phi }\left( T_{2}\right) \right) 
\end{array}%
\right) 
\end{equation*}%
by (\ref{psi_mor_in}).

Also we have that $\Phi \left( \mu \right) \in \mathrm{Hom}\left( \Phi
\left( F\left( X_{1}\right) \right) ,\Phi \left( F\left( X_{2}\right)
\right) \right) $,\linebreak $s_{F\left( X_{i}\right) }^{\Phi }\left(
T_{i}\right) \in Cl_{H_{2}}\left( \Phi \left( F\left( X_{i}\right) \right)
\right) $ by (\ref{alpha}), where $i=1,2$, and $\Phi \left( \mu \right)
\left( s_{F\left( X_{1}\right) }^{\Phi }\left( T_{1}\right) \right)
\subseteq s_{F\left( X_{2}\right) }^{\Phi }\left( T_{2}\right) $ by
Proposition \ref{prop_phi_phi_t_incl}. Therefore $\left( \Phi \left( F\left(
X_{i}\right) \right) ,s_{F\left( X_{i}\right) }^{\Phi }\left( T_{i}\right)
\right) \in \mathrm{Ob}\mathfrak{Cl}_{H_{2}}$, $i=1,2$, and%
\begin{equation*}
\Phi \left( \mu \right) \in \mathrm{Mor}_{\mathfrak{Cl}_{H_{2}}}\left(
\left( \Phi (F\left( X_{1}\right) ),s_{F\left( X_{1}\right) }^{\Phi }\left(
T_{1}\right) \right) ,\left( \Phi (F\left( X_{2}\right) ),s_{F\left(
X_{2}\right) }^{\Phi }\left( T_{2}\right) \right) \right) .
\end{equation*}%
\begin{equation*}
\left( \mathcal{FR}_{H_{2}}\right) \left( \Lambda \left( \mu \right) \right)
=\left( \mathcal{FR}_{H_{2}}\right) \left( \Phi \left( \mu \right) \right)
=\left( \Phi \left( \mu \right) ,%
\begin{array}{c}
\left( \Phi (F\left( X_{1}\right) ),s_{F\left( X_{1}\right) }^{\Phi }\left(
T_{1}\right) \right)  \\ 
\left( \Phi (F\left( X_{2}\right) ),s_{F\left( X_{2}\right) }^{\Phi }\left(
T_{2}\right) \right) 
\end{array}%
\right) ,
\end{equation*}%
by (\ref{lambda_mor}) and (\ref{FR_mor_s}). Therefore $\Psi \left( \mathcal{%
FR}_{H_{1}}\left( \mu \right) \right) =\left( \mathcal{FR}_{H_{2}}\right)
\left( \Lambda \left( \mu \right) \right) $.
\end{proof}

\begin{proposition}
\label{psi_isom}$\Psi $ is an isomorphism of categories.
\end{proposition}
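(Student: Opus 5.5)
We will imitate the proof of Proposition \ref{lambda_isom}. By Remark \ref{phi_inv_eq}, the automorphism $\Phi^{-1}$ provides the automorphic equivalence of $H_{2}$ and $H_{1}$ in the sense of Definition \ref{old}. We take for it the system of bijections $s^{\Phi^{-1}}$ given by (\ref{phi_inv_bij}). Then the construction (\ref{psi_ob}), (\ref{psi_mor_in}) applied to $\Phi^{-1}$ gives a functor $\Psi^{\prime}:\mathfrak{Cor}_{H_{2}}\rightarrow \mathfrak{Cor}_{H_{1}}$ (Proposition \ref{psi_functor}). Likewise, the construction of Subsection \ref{functor_lambda} gives a functor $\Lambda^{\prime}$, and by (\ref{lambda_inv_1}), (\ref{lambda_inv_2}) we have $\Lambda^{\prime}\Lambda=id_{\mathfrak{Cl}_{H_{1}}}$ and $\Lambda\Lambda^{\prime}=id_{\mathfrak{Cl}_{H_{2}}}$. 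Proposition \ref{right_comm}, applied to both pairs, gives
\begin{equation*}
\mathcal{FR}_{H_{2}}\Lambda=\Psi\mathcal{FR}_{H_{1}},\qquad \mathcal{FR}_{H_{1}}\Lambda^{\prime}=\Psi^{\prime}\mathcal{FR}_{H_{2}}.
\end{equation*}

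The key step is to avoid computing $\Psi^{\prime}\Psi$ directly on morphisms, which would involve choosing lifts $\mu$. Instead we use Remark \ref{epi_f} together with the surjectivity of $\mathcal{FR}_{H_{1}}$ (Proposition \ref{FR_epi}). We compute
\begin{equation*}
\Psi^{\prime}\Psi\mathcal{FR}_{H_{1}}=\Psi^{\prime}\mathcal{FR}_{H_{2}}\Lambda=\mathcal{FR}_{H_{1}}\Lambda^{\prime}\Lambda=\mathcal{FR}_{H_{1}}=id_{\mathfrak{Cor}_{H_{1}}}\mathcal{FR}_{H_{1}}.
\end{equation*}
Since $\mathcal{FR}_{H_{1}}$ is a surjective functor, Remark \ref{epi_f} yields $\Psi^{\prime}\Psi=id_{\mathfrak{Cor}_{H_{1}}}$. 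Symmetrically, using the surjectivity of $\mathcal{FR}_{H_{2}}$, we obtain
\begin{equation*}
\Psi\Psi^{\prime}\mathcal{FR}_{H_{2}}=\Psi\mathcal{FR}_{H_{1}}\Lambda^{\prime}=\mathcal{FR}_{H_{2}}\Lambda\Lambda^{\prime}=\mathcal{FR}_{H_{2}},
\end{equation*}
so $\Psi\Psi^{\prime}=id_{\mathfrak{Cor}_{H_{2}}}$. Hence $\Psi$ is an isomorphism of categories.

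The main point to check is that all the earlier results really apply to the pair $H_{2},H_{1}$ with the automorphism $\Phi^{-1}$. This requires $s^{\Phi^{-1}}$ to satisfy (\ref{autom_action}), which is stated after (\ref{phi_inv_bij}). It also requires (\ref{alpha}) to hold for $\Phi^{-1}$ with this particular system $s^{\Phi^{-1}}$. Remark \ref{alpha_unic} guarantees this: $\alpha(\Phi^{-1})$ is independent of the choice of the system of bijections, so the objects $\Psi^{\prime}_{\mathrm{Ob}}(\cdot)$ are indeed $H_{1}$-closed quotients. Once this is verified, the argument is purely formal.
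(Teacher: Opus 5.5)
Your proof is correct and follows the paper's argument essentially verbatim. Like the paper, you construct $\Lambda^{\prime}$ and $\Psi^{\prime}$ from $\Phi^{-1}$ via Remark \ref{phi_inv_eq}, combine both commutative right-hand squares with (\ref{lambda_inv_1}) and (\ref{lambda_inv_2}), and cancel the surjective functors $\mathcal{FR}_{H_{1}}$, $\mathcal{FR}_{H_{2}}$ using Proposition \ref{FR_epi} and Remark \ref{epi_f}. Your closing check that everything applies to the pair $H_{2},H_{1}$ with the system $s^{\Phi^{-1}}$ from (\ref{phi_inv_bij}) is a detail the paper leaves implicit.
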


\begin{proof}
We remind again that in our assumptions automorphism $\Phi ^{-1}:\Theta
^{0}\rightarrow \Theta ^{0}$ provides the automorphic equivalence of
algebras $H_{2}$ and $H_{1}$ by Remark \ref{phi_inv_eq}. Hence exists a
functor $\Lambda ^{\prime }$ which is also defined by (\ref{delta_ob}) and (%
\ref{lambda_mor}) with respect to the automorphism $\Phi ^{-1}$. By
Propositions \ref{psi_functor} and \ref{right_comm}, there exists a functor $%
\Psi ^{\prime }$ which is also defined by (\ref{psi_ob}) and (\ref%
{psi_mor_in}) with respect to the automorphism $\Phi ^{-1}$. This functor
closes the diagram%
\begin{equation}
\begin{array}{ccc}
\mathfrak{Cl}_{H_{2}} & \overrightarrow{\Lambda ^{\prime }} & \mathfrak{Cl}%
_{H_{1}} \\ 
\downarrow \mathcal{FR}_{H_{2}} &  & \mathcal{FR}_{H_{1}}\downarrow \\ 
\mathfrak{Cor}_{H_{2}} & \underrightarrow{\Psi ^{\prime }} & \mathfrak{Cor}%
_{H_{1}}%
\end{array}
\label{right_new_inv}
\end{equation}%
commutative. Therefore we have by Proposition \ref{right_comm} and by (\ref%
{lambda_inv_1}) that%
\begin{equation*}
\Psi ^{\prime }\Psi \mathcal{FR}_{H_{1}}=\Psi ^{\prime }\mathcal{FR}%
_{H_{2}}\Lambda =\mathcal{FR}_{H_{1}}\Lambda ^{\prime }\Lambda =\mathcal{FR}%
_{H_{1}}id_{\mathfrak{Cl}_{H_{1}}}=id_{\mathfrak{Cor}_{H_{1}}}\mathcal{FR}%
_{H_{1}},
\end{equation*}%
so%
\begin{equation*}
\Psi ^{\prime }\Psi =id_{\mathfrak{Cor}_{H_{1}}}
\end{equation*}%
by Proposition \ref{FR_epi} and Remark \ref{epi_f}. Symmetrical reasoning
gives us that%
\begin{equation*}
\Psi \Psi ^{\prime }=id_{\mathfrak{Cor}_{H_{2}}}.
\end{equation*}
\end{proof}

Now, we can conclude from Propositions \ref{lambda_functor}, \ref{left_comm}%
, \ref{lambda_isom}, \ref{psi_functor}, \ref{right_comm}, \ref{psi_isom} that

\begin{proposition}
\label{old_>new}If two algebras $H_{1},H_{2}\in \Theta $ are subjects of
Definition \ref{old} then they are subjects of Definition \ref{new}.
\end{proposition}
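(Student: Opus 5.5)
The plan is simply to assemble the results already proved in this subsection. We assume that $H_{1},H_{2}$ are subjects of Definition \ref{old} with an automorphism $\Phi :\Theta ^{0}\rightarrow \Theta ^{0}$. We fix a system of bijections $s_{F\left( X\right) }^{\Phi }$ satisfying (\ref{autom_action}); by (\ref{alpha}) and Remark \ref{alpha_unic} the sets $s_{F\left( X\right) }^{\Phi }\left( T\right) $ do not depend on this choice. We then take the three functors required in Definition \ref{new} to be: the given automorphism $\Phi $ of $\Theta ^{0}$, the functor $\Lambda $ defined by (\ref{delta_ob}) and (\ref{lambda_mor}), and the functor $\Psi $ defined by (\ref{psi_ob}) and (\ref{psi_mor_in}).

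The verification then proceeds in the following order.
\begin{enumerate}
\item $\Phi $ is an isomorphism $\Theta ^{0}\rightarrow \Theta ^{0}$ by hypothesis.
\item $\Lambda $ is a functor by Proposition \ref{lambda_functor}. It is an isomorphism of categories by Proposition \ref{lambda_isom}, which uses Remark \ref{phi_inv_eq} and (\ref{phi_inv_bij}) to build the inverse $\Lambda ^{\prime }$ from $\Phi ^{-1}$.
\item The left square of Definition \ref{new} commutes by Proposition \ref{left_comm}.
\item $\Psi $ is well defined by Proposition \ref{psi_mor_prop}: the value does not depend on the lift $\mu $, and such a lift exists by Proposition \ref{projective}. $\Psi $ is a functor by Proposition \ref{psi_functor}.
\item The right square commutes by Proposition \ref{right_comm}.
\item $\Psi $ is an isomorphism by Proposition \ref{psi_isom}. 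That proof uses the surjectivity of $\mathcal{FR}_{H_{1}}$ (Proposition \ref{FR_epi}) together with Remark \ref{epi_f} to cancel $\mathcal{FR}_{H_{1}}$ from $\Psi ^{\prime }\Psi \mathcal{FR}_{H_{1}}=\mathcal{FR}_{H_{1}}$.
\end{enumerate}

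No new computation is needed. The only point requiring care is consistency: the same system $s^{\Phi }$, and the same inverse system $s^{\Phi ^{-1}}$ from (\ref{phi_inv_bij}), must be used for both $\Lambda $ and $\Psi $ and for their inverses $\Lambda ^{\prime }$ and $\Psi ^{\prime }$. This is needed so that the identities $\Lambda ^{\prime }\Lambda =id$ and the commutativity of (\ref{right_new_inv}) hold simultaneously. I expect this to be the only potential obstacle, and it is resolved by Remark \ref{alpha_unic} together with the fact that Remark \ref{phi_inv_eq} lets us apply all of the above constructions verbatim to the pair $H_{2},H_{1}$ with $\Phi ^{-1}$.

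Since $\Lambda $, $\Phi $ and $\Psi $ are isomorphisms and both diagrams commute, $H_{1}$ and $H_{2}$ satisfy Definition \ref{new}.
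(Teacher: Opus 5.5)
Your proposal is correct and follows the paper's own argument: the paper proves this proposition simply by combining Propositions \ref{lambda_functor}, \ref{left_comm}, \ref{lambda_isom}, \ref{psi_functor}, \ref{right_comm} and \ref{psi_isom}, with $\Lambda $ and $\Psi $ defined by (\ref{delta_ob}), (\ref{lambda_mor}), (\ref{psi_ob}) and (\ref{psi_mor_in}). Your remark about using the same systems $s^{\Phi }$ and $s^{\Phi ^{-1}}$ throughout is a harmless precaution, and by Remark \ref{alpha_unic} it is not actually needed.
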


\subsection{Consequences of the Definition \protect\ref{new}}

\subsubsection{Consequences of the commutativity of the left diagram from
the Definition \protect\ref{new}}

\begin{proposition}
\label{left_diagram}If for two algebras $H_{1},H_{2}\in \Theta $ exists an
isomorphism $\Lambda $, which closes the left diagram of the Definition \ref%
{new} commutative, then the%
\begin{equation}
\Lambda \left( F\left( X\right) ,T\right) =\left( \Phi \left( F\left(
X\right) \right) ,\alpha (\Phi )_{F\left( X\right) }\left( T\right) \right)
\label{lambda_new}
\end{equation}%
holds for every $\left( F\left( X\right) ,T\right) \in \mathrm{Ob}\mathfrak{%
Cl}_{H_{1}}$, where%
\begin{equation}
\alpha (\Phi )_{F\left( X\right) }:Cl_{H_{1}}(F\left( X\right) )\rightarrow
Cl_{H_{2}}(\Phi (F\left( X\right) ))  \label{alfa_Phi_F}
\end{equation}%
is a \textit{bijection}.
\end{proposition}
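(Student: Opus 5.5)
The plan is to read off the object part of $\Lambda$ from the commutativity of the left diagram, and then obtain bijectivity from the fact that $\Lambda$ is an isomorphism of categories.

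First, take $\left( F\left( X\right) ,T\right) \in \mathrm{Ob}\mathfrak{Cl}_{H_{1}}$. Commutativity $\mathcal{FG}_{H_{2}}\Lambda =\Phi \mathcal{FG}_{H_{1}}$ on objects, together with (\ref{FG_ob}), gives
\[
\mathcal{FG}_{H_{2}}\left( \Lambda \left( F\left( X\right) ,T\right) \right) =\Phi \left( F\left( X\right) \right) .
\]
Every object of $\mathfrak{Cl}_{H_{2}}$ is a couple whose first component is its image under $\mathcal{FG}_{H_{2}}$. Hence $\Lambda \left( F\left( X\right) ,T\right) =\left( \Phi \left( F\left( X\right) \right) ,T^{\ast }\right) $ for some $T^{\ast }\in Cl_{H_{2}}\left( \Phi \left( F\left( X\right) \right) \right) $. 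We then define $\alpha (\Phi )_{F\left( X\right) }\left( T\right) =T^{\ast }$. This gives a well-defined map (\ref{alfa_Phi_F}), and (\ref{lambda_new}) holds by construction.

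It remains to prove that each $\alpha (\Phi )_{F\left( X\right) }$ is a bijection. Injectivity follows from the injectivity of $\Lambda _{\mathrm{Ob}}$ ($\Lambda $ is an isomorphism). If $\alpha (\Phi )_{F\left( X\right) }\left( T_{1}\right) =\alpha (\Phi )_{F\left( X\right) }\left( T_{2}\right) $, then by (\ref{lambda_new}) $\Lambda \left( F\left( X\right) ,T_{1}\right) =\Lambda \left( F\left( X\right) ,T_{2}\right) $, so $T_{1}=T_{2}$.

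For surjectivity, let $S\in Cl_{H_{2}}\left( \Phi \left( F\left( X\right) \right) \right) $. We use the inverse functor $\Lambda ^{-1}$ and compute the first component of $\Lambda ^{-1}\left( \Phi \left( F\left( X\right) \right) ,S\right) $. Applying $\Phi ^{-1}$ to the left diagram gives
\[
\mathcal{FG}_{H_{1}}\Lambda ^{-1}=\Phi ^{-1}\mathcal{FG}_{H_{2}},
\]
obtained from $\mathcal{FG}_{H_{2}}\Lambda =\Phi \mathcal{FG}_{H_{1}}$ by composing with $\Lambda ^{-1}$ on the right and $\Phi ^{-1}$ on the left. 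Thus $\Lambda ^{-1}\left( \Phi \left( F\left( X\right) \right) ,S\right) =\left( \Phi ^{-1}\Phi \left( F\left( X\right) \right) ,T\right) =\left( F\left( X\right) ,T\right) $ with $T\in Cl_{H_{1}}\left( F\left( X\right) \right) $. Applying $\Lambda $ and (\ref{lambda_new}) then yields $\alpha (\Phi )_{F\left( X\right) }\left( T\right) =S$.

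None of these steps is difficult. The only point requiring care is that the inverse diagram commutes, so that $\Lambda ^{-1}$ sends objects over $\Phi \left( F\left( X\right) \right) $ to objects over $F\left( X\right) $; this is the one-line functor computation above.
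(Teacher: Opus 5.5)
Your proof is correct and follows essentially the paper's route. Both arguments read off the first component from $\mathcal{FG}_{H_2}\Lambda=\Phi\,\mathcal{FG}_{H_1}$ and then use the derived identity $\mathcal{FG}_{H_1}\Lambda^{-1}=\Phi^{-1}\mathcal{FG}_{H_2}$ to control $\Lambda^{-1}$. The only cosmetic difference is that the paper builds an explicit two-sided inverse $\alpha(\Phi^{-1})_{\Phi(F(X))}$ and checks both composites, whereas you split the same argument into injectivity (from injectivity of $\Lambda$ on objects) and surjectivity (via $\Lambda^{-1}$).
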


\begin{proof}
We have that $T\in Cl_{H_{1}}F\left( X\right) $ by definition of $\mathfrak{%
Cl}_{H_{1}}$. $\Lambda \left( F\left( X\right) ,T\right) =\left( F\left(
Y\right) ,\widetilde{T}\right) \in \mathrm{Ob}\mathfrak{Cl}_{H_{2}}$. It
means that $F\left( Y\right) \in \mathrm{Ob}\Theta ^{0}$, $\widetilde{T}\in
Cl_{H_{2}}F\left( Y\right) $. We can conclude from our conditions and from (%
\ref{FG_ob}) that%
\begin{equation*}
\mathcal{FG}_{H_{2}}\left( \Lambda \left( F\left( X\right) ,T\right) \right)
=\mathcal{FG}_{H_{2}}\left( F\left( Y\right) ,\widetilde{T}\right) =F\left(
Y\right) =
\end{equation*}%
\begin{equation*}
\Phi \left( \mathcal{FG}_{H_{1}}\left( F\left( X\right) ,T\right) \right)
=\Phi \left( F\left( X\right) \right) .
\end{equation*}

We denote $\alpha (\Phi )_{F\left( X\right) }\left( T\right) =\widetilde{T}$%
. We fix $F\left( X\right) \in \mathrm{Ob}\Theta ^{0}$. $\left( F\left(
X\right) ,T\right) \in \mathrm{Ob}\mathfrak{Cl}_{H_{1}}$ for every $T\in
Cl_{H_{1}}F\left( X\right) $, so $\alpha (\Phi )_{F\left( X\right)
}:Cl_{H_{1}}(F\left( X\right) )\rightarrow Cl_{H_{2}}(\Phi (F\left( X\right)
))$ is a mapping. Now we will prove that $\alpha (\Phi )_{F\left( X\right) }$
is a bijection. We have by our condition that%
\begin{equation*}
\Phi ^{-1}\mathcal{FG}_{H_{2}}=\Phi ^{-1}\mathcal{FG}_{H_{2}}\Lambda \Lambda
^{-1}=\Phi ^{-1}\Phi \mathcal{FG}_{H_{1}}\Lambda ^{-1}=\mathcal{FG}%
_{H_{1}}\Lambda ^{-1}.
\end{equation*}%
It means that for algebras $H_{1},H_{2}$ the left diagram of the Definition %
\ref{new} are closed commutative with isomorphisms $\Lambda ^{-1}$ and $\Phi
^{-1}$. So, as above, for every $F\left( Y\right) \in \mathrm{Ob}\Theta ^{0}$
there exists a mapping $\alpha (\Phi ^{-1})_{F\left( Y\right)
}:Cl_{H_{2}}(F\left( Y\right) )\rightarrow Cl_{H_{1}}(\Phi ^{-1}(F\left(
Y\right) ))$ such that $\Lambda ^{-1}\left( F\left( Y\right) ,R\right)
=\left( \Phi ^{-1}\left( F\left( Y\right) \right) ,\alpha (\Phi
^{-1})_{F\left( Y\right) }\left( R\right) \right) $ holds for every $R\in
Cl_{H_{2}}(F\left( Y\right) )$. We consider $\left( F\left( X\right)
,T\right) \in \mathrm{Ob}\mathfrak{Cl}_{H_{1}}$ and we denote $\Phi \left(
F\left( X\right) \right) =F\left( Y\right) $. Therefore%
\begin{equation*}
\left( F\left( X\right) ,T\right) =\Lambda ^{-1}\Lambda \left( F\left(
X\right) ,T\right) =\Lambda ^{-1}\left( \Phi \left( F\left( X\right) \right)
,\alpha (\Phi )_{F\left( X\right) }\left( T\right) \right) =
\end{equation*}%
\begin{equation*}
\left( \Phi ^{-1}\Phi \left( F\left( X\right) \right) ,\alpha (\Phi
^{-1})_{F\left( Y\right) }\alpha (\Phi )_{F\left( X\right) }\left( T\right)
\right) .
\end{equation*}%
Hence, $\alpha (\Phi ^{-1})_{F\left( Y\right) }\alpha (\Phi )_{F\left(
X\right) }\left( T\right) =T$ for every $T\in Cl_{H_{1}}(F\left( X\right) )$%
. On the other hand, we have for every $\left( F\left( Y\right) ,R\right)
\in \mathrm{Ob}\mathfrak{Cl}_{H_{2}}$ that%
\begin{equation*}
\left( F\left( Y\right) ,R\right) =\Lambda \Lambda ^{-1}\left( F\left(
Y\right) ,R\right) =\Lambda \left( F\left( X\right) ,\alpha (\Phi
^{-1})_{F\left( Y\right) }\left( R\right) \right) =
\end{equation*}%
\begin{equation*}
\Lambda \left( F\left( X\right) ,T\right) =\left( \Phi \left( F\left(
X\right) \right) ,\alpha (\Phi )_{F\left( X\right) }\alpha (\Phi
^{-1})_{F\left( Y\right) }\left( R\right) \right) ,
\end{equation*}%
because $F\left( X\right) =\Phi ^{-1}\left( F\left( Y\right) \right) $. It
means that $\alpha (\Phi )_{F\left( X\right) }\alpha (\Phi ^{-1})_{F\left(
Y\right) }\left( R\right) =R$ for every $R\in Cl_{H_{2}}(F\left( Y\right) )$%
. Therefore, the mappings $\alpha (\Phi )_{F\left( X\right)
}:Cl_{H_{1}}(F\left( X\right) )\rightarrow Cl_{H_{2}}(F\left( Y\right) )$
and $\alpha (\Phi ^{-1})_{F\left( Y\right) }:Cl_{H_{2}}(F\left( Y\right)
)\rightarrow Cl_{H_{1}}(F\left( X\right) )$ are inverse to each other. So $%
\alpha (\Phi )_{F\left( X\right) }$ is a bijection.
\end{proof}

\begin{proposition}
\label{left_diagram_mor}If for two algebras $H_{1},H_{2}\in \Theta $ exists
an isomorphism $\Lambda $, which closes the left diagram of the Definition %
\ref{new} commutative, then $\Lambda \left( \varphi \right) =\Phi \left(
\varphi \right) $ holds for every $\varphi \in \mathrm{Mor}_{\mathfrak{Cl}%
_{H_{1}}}\left( \left( F\left( X_{1}\right) ,T_{1}\right) ,\left( F\left(
X_{2}\right) ,T_{2}\right) \right) $, where\linebreak $\left( F\left(
X_{1}\right) ,T_{1}\right) ,\left( F\left( X_{2}\right) ,T_{2}\right) \in 
\mathrm{Ob}\mathfrak{Cl}_{H_{1}}$.
\end{proposition}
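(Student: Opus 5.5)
This follows directly from the commutativity of the left diagram of Definition \ref{new}, together with the description of $\mathcal{FG}_{H}$ on morphisms given by (\ref{FG_mor}). We take
\begin{equation*}
\varphi \in \mathrm{Mor}_{\mathfrak{Cl}_{H_{1}}}\left( \left( F\left( X_{1}\right) ,T_{1}\right) ,\left( F\left( X_{2}\right) ,T_{2}\right) \right) .
\end{equation*}
By Proposition \ref{left_diagram}, $\Lambda \left( F\left( X_{i}\right) ,T_{i}\right) =\left( \Phi \left( F\left( X_{i}\right) \right) ,\alpha (\Phi )_{F\left( X_{i}\right) }\left( T_{i}\right) \right)$ for $i=1,2$. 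Hence
\begin{equation*}
\Lambda \left( \varphi \right) \in \mathrm{Mor}_{\mathfrak{Cl}_{H_{2}}}\left( \left( \Phi \left( F\left( X_{1}\right) \right) ,\alpha (\Phi )_{F\left( X_{1}\right) }\left( T_{1}\right) \right) ,\left( \Phi \left( F\left( X_{2}\right) \right) ,\alpha (\Phi )_{F\left( X_{2}\right) }\left( T_{2}\right) \right) \right) .
\end{equation*}
By the definition of $\mathfrak{Cl}_{H_{2}}$, this means that $\Lambda \left( \varphi \right)$ is a homomorphism $\Phi \left( F\left( X_{1}\right) \right) \rightarrow \Phi \left( F\left( X_{2}\right) \right)$ mapping $\alpha (\Phi )_{F\left( X_{1}\right) }\left( T_{1}\right)$ into $\alpha (\Phi )_{F\left( X_{2}\right) }\left( T_{2}\right)$.

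Next we apply both paths of the left diagram to the morphism $\varphi$. Since $\mathcal{FG}_{H_{2}}\Lambda =\Phi \mathcal{FG}_{H_{1}}$ holds as an equality of functors, it holds on morphisms as well:
\begin{equation*}
\mathcal{FG}_{H_{2}}\left( \Lambda \left( \varphi \right) \right) =\Phi \left( \mathcal{FG}_{H_{1}}\left( \varphi \right) \right) .
\end{equation*}
By (\ref{FG_mor}), $\mathcal{FG}_{H_{1}}\left( \varphi \right) =\varphi$ regarded as a morphism of $\Theta ^{0}$, and likewise $\mathcal{FG}_{H_{2}}\left( \Lambda \left( \varphi \right) \right) =\Lambda \left( \varphi \right)$ regarded as a morphism of $\Theta ^{0}$. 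Therefore $\Lambda \left( \varphi \right) =\Phi \left( \varphi \right)$ as homomorphisms $\Phi \left( F\left( X_{1}\right) \right) \rightarrow \Phi \left( F\left( X_{2}\right) \right)$.

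The only step needing care is that a morphism of $\mathfrak{Cl}_{H_{2}}$ is determined by its underlying homomorphism once the source and target objects are fixed. This holds because, by the definition of $\mathfrak{Cl}_{H}$ (see also Remark \ref{alg_syst}), morphisms are exactly the homomorphisms satisfying the inclusion condition. Since the source and target objects of $\Lambda \left( \varphi \right)$ are already fixed by Proposition \ref{left_diagram}, the equality of underlying homomorphisms yields $\Lambda \left( \varphi \right) =\Phi \left( \varphi \right)$ as morphisms of $\mathfrak{Cl}_{H_{2}}$. In particular, $\Phi \left( \varphi \right)$ maps $\alpha (\Phi )_{F\left( X_{1}\right) }\left( T_{1}\right)$ into $\alpha (\Phi )_{F\left( X_{2}\right) }\left( T_{2}\right)$. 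There is no real obstacle beyond this bookkeeping of the faithfulness of $\mathcal{FG}_{H}$.
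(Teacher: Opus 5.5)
Your proof is correct and is the same as the paper's: the paper also applies the functor equality $\mathcal{FG}_{H_{2}}\Lambda =\Phi \mathcal{FG}_{H_{1}}$ to $\varphi$ and uses (\ref{FG_mor}) on both sides. Your extra bookkeeping of the source and target objects through Proposition \ref{left_diagram} is not needed for the equality of homomorphisms, but it is harmless and makes the identification as morphisms of $\mathfrak{Cl}_{H_{2}}$ explicit.
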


\begin{proof}
The%
\begin{equation}
\mathcal{FG}_{H_{2}}\left( \Lambda \left( \varphi \right) \right) =\Lambda
\left( \varphi \right) =\Phi \left( \mathcal{FG}_{H_{1}}\left( \varphi
\right) \right) =\Phi \left( \varphi \right)  \label{Lambda_phi}
\end{equation}%
holds by (\ref{FG_mor}).
\end{proof}

\subsubsection{Consequence of the commutativity of two diagrams from the
Definition \protect\ref{new}}

\begin{proposition}
\label{new_>old}If two algebras $H_{1},H_{2}\in \Theta $ are subjects of
Definition \ref{new} then they are subjects of Definition \ref{old}.
\end{proposition}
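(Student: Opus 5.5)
We take the automorphism $\Phi$ from Definition \ref{new} as the automorphism required in Definition \ref{old}. For the bijections we use those of Proposition \ref{left_diagram}: for every $F\left( X\right) \in \mathrm{Ob}\Theta ^{0}$ the map $\alpha (\Phi )_{F\left( X\right) }:Cl_{H_{1}}(F\left( X\right) )\rightarrow Cl_{H_{2}}(\Phi (F\left( X\right) ))$ is a bijection, and it satisfies $\Lambda \left( F\left( X\right) ,T\right) =\left( \Phi \left( F\left( X\right) \right) ,\alpha (\Phi )_{F\left( X\right) }\left( T\right) \right)$. Proposition \ref{left_diagram_mor} also gives $\Lambda \left( \varphi \right) =\Phi \left( \varphi \right)$ on morphisms. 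It then remains to verify the coordination condition, namely the equivalence of (\ref{old_1}) and (\ref{old_2}).

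Fix $\mu _{1},\mu _{2}\in \mathrm{Hom}\left( F\left( X_{1}\right) ,F\left( X_{2}\right) \right)$ and $T\in Cl_{H_{1}}(F\left( X_{2}\right) )$. Put $T_{1}=\left( \Delta _{F\left( X_{1}\right) }\right) _{H_{1}}^{\prime \prime }$. By Proposition \ref{allTheta_0_morpfism}, both $\mu _{1}$ and $\mu _{2}$ are morphisms $\left( F\left( X_{1}\right) ,T_{1}\right) \rightarrow \left( F\left( X_{2}\right) ,T\right)$ in $\mathfrak{Cl}_{H_{1}}$. By (\ref{FR_mor_s}), $\mathcal{FR}_{H_{1}}\left( \mu _{i}\right)$ is the unique $\varphi _{i}$ with $\varphi _{i}\tau _{1}=\tau \mu _{i}$, where $\tau _{1}$ is the natural epimorphism of $F\left( X_{1}\right)$ onto $F\left( X_{1}\right) /T_{1}$. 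Since $\tau _{1}$ is surjective, Remark \ref{epi} yields the key translation
\[
\tau \mu _{1}=\tau \mu _{2}\iff \mathcal{FR}_{H_{1}}\left( \mu _{1}\right) =\mathcal{FR}_{H_{1}}\left( \mu _{2}\right) .
\]
The same argument works in $\mathfrak{Cl}_{H_{2}}$. Here $\Lambda \left( \mu _{i}\right) =\Phi \left( \mu _{i}\right)$ is a morphism from $\Lambda \left( F\left( X_{1}\right) ,T_{1}\right) =\left( \Phi \left( F\left( X_{1}\right) \right) ,\alpha (\Phi )_{F\left( X_{1}\right) }\left( T_{1}\right) \right)$ to $\left( \Phi \left( F\left( X_{2}\right) \right) ,\alpha (\Phi )_{F\left( X_{2}\right) }\left( T\right) \right)$. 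Hence $\widetilde{\tau }\Phi \left( \mu _{1}\right) =\widetilde{\tau }\Phi \left( \mu _{2}\right)$ if and only if $\mathcal{FR}_{H_{2}}\left( \Lambda \left( \mu _{1}\right) \right) =\mathcal{FR}_{H_{2}}\left( \Lambda \left( \mu _{2}\right) \right)$. This again uses Remark \ref{epi}, now applied to the natural epimorphism onto the quotient of $\Phi \left( F\left( X_{1}\right) \right)$ by $\alpha (\Phi )_{F\left( X_{1}\right) }\left( T_{1}\right)$. Note that this step does not need $\alpha (\Phi )_{F\left( X_{1}\right) }\left( T_{1}\right)$ to be the closure of the diagonal.

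We now use the right diagram, $\mathcal{FR}_{H_{2}}\Lambda =\Psi \mathcal{FR}_{H_{1}}$. It turns the second condition into $\Psi \left( \mathcal{FR}_{H_{1}}\left( \mu _{1}\right) \right) =\Psi \left( \mathcal{FR}_{H_{1}}\left( \mu _{2}\right) \right)$. Since $\Psi$ is an isomorphism of categories, it is injective on morphisms, so this is equivalent to $\mathcal{FR}_{H_{1}}\left( \mu _{1}\right) =\mathcal{FR}_{H_{1}}\left( \mu _{2}\right)$. Chaining the three equivalences gives (\ref{old_1}) $\iff$ (\ref{old_2}).

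The main obstacle is the bookkeeping in the translation step. One has to apply the uniqueness in Proposition \ref{squear_prop} item 3) together with Remark \ref{epi} carefully, and in particular choose a domain object such that every homomorphism is a morphism of $\mathfrak{Cl}_{H_{1}}$. The diagonal closure from Proposition \ref{allTheta_0_morpfism} is exactly that choice. Once the translation is in place, everything else reduces to commutativity of the two diagrams and faithfulness of $\Psi$.
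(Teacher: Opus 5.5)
Your proposal is correct and follows essentially the same route as the paper's proof. You take the domain object $\left( F\left( X_{1}\right) ,\left( \Delta _{F\left( X_{1}\right) }\right) _{H_{1}}^{\prime \prime }\right)$ via Proposition \ref{allTheta_0_morpfism}, translate (\ref{old_1}) and (\ref{old_2}) into equalities of $\mathcal{FR}_{H_{1}}\left( \mu _{i}\right)$ and of $\mathcal{FR}_{H_{2}}\left( \Lambda \left( \mu _{i}\right) \right)$ by Remark \ref{epi}, and finish with the right diagram and the injectivity of $\Psi$ on morphisms. The only difference is presentational: the paper proves the two implications separately, while you chain the equivalences.
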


\begin{proof}
We just proved in Proposition \ref{left_diagram} that in our conditions for
every $F\left( X\right) \in \mathrm{Ob}\Theta ^{0}$ there exists a bijection%
\begin{equation*}
\alpha (\Phi )_{F\left( X\right) }:Cl_{H_{1}}(F\left( X\right) )\rightarrow
Cl_{H_{2}}(\Phi (F\left( X\right) )).
\end{equation*}

Now we consider $F\left( X_{1}\right) ,F\left( X_{2}\right) \in \mathrm{Ob}%
\Theta ^{0}$\textit{, }$\mu _{1},\mu _{2}\in \mathrm{Hom}\left( F\left(
X_{1}\right) ,F\left( X_{2}\right) \right) $\textit{, }$T\in
Cl_{H_{1}}(F\left( X_{2}\right) )$.%
\begin{equation}
\mu _{1},\mu _{2}\in \mathrm{Mor}_{\mathfrak{Cl}_{H_{1}}}\left( \left(
F\left( X_{1}\right) ,\left( \Delta _{F\left( X_{1}\right) }\right)
_{H_{1}}^{\prime \prime }\right) ,\left( F\left( X_{2}\right) ,T\right)
\right)  \label{mu12_incl}
\end{equation}%
holds by Proposition \ref{allTheta_0_morpfism}. So,%
\begin{equation}
\mathcal{FR}_{H_{1}}\left( \mu _{i}\right) =\left( \mu _{i},%
\begin{array}{c}
\left( F\left( X_{1}\right) ,\left( \Delta _{F\left( X_{1}\right) }\right)
_{H_{1}}^{\prime \prime }\right) \\ 
\left( F\left( X_{2}\right) ,T\right)%
\end{array}%
\right) ,  \label{FR_mu}
\end{equation}%
where $i=1,2$, holds by (\ref{FR_mor_s}). Also we have by (\ref{alfa_Phi_F}%
), (\ref{mu12_incl}), (\ref{Lambda_phi}) and by Propositions \ref%
{left_diagram}, \ref{left_diagram_mor} that%
\begin{equation*}
\Phi \left( \mu _{i}\right) =\Lambda \left( \mu _{i}\right) \in \mathrm{Mor}%
_{\mathfrak{Cl}_{H_{2}}}\left( \Lambda \left( F\left( X_{1}\right) ,\left(
\Delta _{F\left( X_{1}\right) }\right) _{H_{1}}^{\prime \prime }\right)
,\Lambda \left( F\left( X_{2}\right) ,T\right) \right) =
\end{equation*}%
\begin{equation*}
\mathrm{Mor}_{\mathfrak{Cl}_{H_{2}}}\left( \left( \Phi \left( F\left(
X_{1}\right) \right) ,\alpha (\Phi )_{F\left( X_{1}\right) }\left( \left(
\Delta _{F\left( X_{1}\right) }\right) _{H_{1}}^{\prime \prime }\right)
\right) ,\left( \Phi \left( F\left( X_{2}\right) \right) ,\alpha (\Phi
)_{F\left( X_{2}\right) }\left( T\right) \right) \right)
\end{equation*}%
for $i=1,2$. So, we have by (\ref{FR_mor_s})%
\begin{equation}
\mathcal{FR}_{H_{2}}\left( \Phi \left( \mu _{i}\right) \right) =\left( \Phi
\left( \mu _{i}\right) ,%
\begin{array}{c}
\left( \Phi \left( F\left( X_{1}\right) \right) ,\alpha (\Phi )_{F\left(
X_{1}\right) }\left( \left( \Delta _{F\left( X_{1}\right) }\right)
_{H_{1}}^{\prime \prime }\right) \right) \\ 
\left( \Phi \left( F\left( X_{2}\right) \right) ,\alpha (\Phi )_{F\left(
X_{2}\right) }\left( T\right) \right)%
\end{array}%
\right) ,  \label{FR_H_2_Phi}
\end{equation}%
for $i=1,2$. The right diagram of the Definition \ref{new} is commutative,
so, by (\ref{Lambda_phi}),%
\begin{equation}
\mathcal{FR}_{H_{2}}\left( \Phi \left( \mu _{i}\right) \right) =\mathcal{FR}%
_{H_{2}}\left( \Lambda \left( \mu _{i}\right) \right) =\Psi \mathcal{FR}%
_{H_{1}}\left( \mu _{i}\right)  \label{FR_H_2}
\end{equation}%
holds for $i=1,2$.

We consider these natural epimorphisms: $\delta :F\left( X_{1}\right)
\rightarrow F\left( X_{1}\right) /\left( \Delta _{F\left( X_{1}\right)
}\right) _{H_{1}}^{\prime \prime }$, $\tau :F\left( X_{2}\right) \rightarrow
F\left( X_{2}\right) /T$, $\widetilde{\delta }:\Phi \left( F\left(
X_{1}\right) \right) \rightarrow \Phi \left( F\left( X_{1}\right) \right)
/\alpha (\Phi )_{F\left( X_{1}\right) }\left( \left( \Delta _{F\left(
X_{1}\right) }\right) _{H_{1}}^{\prime \prime }\right) $, $\widetilde{\tau }%
:\Phi \left( F\left( X_{2}\right) \right) \rightarrow \Phi \left( F\left(
X_{2}\right) \right) /\alpha (\Phi )_{F\left( X_{2}\right) }\left( T\right) $%
.

We suppose that (\ref{old_1}) holds. We have from (\ref{FR_mu}) that%
\begin{equation}
\tau \mu _{1}=\mathcal{FR}_{H_{1}}\left( \mu _{1}\right) \delta =\tau \mu
_{2}=\mathcal{FR}_{H_{1}}\left( \mu _{2}\right) \delta ,  \label{eqv}
\end{equation}%
so%
\begin{equation}
\mathcal{FR}_{H_{1}}\left( \mu _{1}\right) =\mathcal{FR}_{H_{1}}\left( \mu
_{2}\right)  \label{FR_mu_H1_eqv}
\end{equation}%
by Remark \ref{epi}, and%
\begin{equation}
\mathcal{FR}_{H_{2}}\left( \Phi \left( \mu _{1}\right) \right) =\mathcal{FR}%
_{H_{2}}\left( \Phi \left( \mu _{2}\right) \right)  \label{FR_phi_mu_H2_eqv}
\end{equation}%
by (\ref{FR_H_2}). Therefore, we have by (\ref{FR_H_2_Phi}) that%
\begin{equation}
\widetilde{\tau }\Phi \left( \mu _{1}\right) =\mathcal{FR}_{H_{2}}\left(
\Phi \left( \mu _{1}\right) \right) \widetilde{\delta }=\mathcal{FR}%
_{H_{2}}\left( \Phi \left( \mu _{2}\right) \right) \widetilde{\delta }=%
\widetilde{\tau }\Phi \left( \mu _{2}\right) .  \label{Chil_eqv}
\end{equation}%
It means that (\ref{old_2}) holds.

Now we suppose that (\ref{old_2}) holds. We conclude (\ref{Chil_eqv}) from (%
\ref{old_2}) and (\ref{FR_H_2_Phi}). So, we have (\ref{FR_phi_mu_H2_eqv}) by
Remark \ref{epi}. We can conclude (\ref{FR_mu_H1_eqv}) from (\ref%
{FR_phi_mu_H2_eqv}) and (\ref{FR_H_2}), because $\Psi $ is an isomorphism.
And we conclude (\ref{eqv}) from (\ref{FR_mu_H1_eqv}) and (\ref{FR_mu}). It
means that (\ref{old_1}) holds. This completes the proof.
\end{proof}

Now we can conclude from Propositions \ref{old_>new} and \ref{new_>old} the

\begin{theorem}
\label{main}Two algebras $H_{1},H_{2}\in \Theta $ are subjects of Definition %
\ref{new} if and only if they are subjects of Definition \ref{old}.
\end{theorem}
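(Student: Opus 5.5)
The theorem follows by combining the two implications already established: Proposition \ref{old_>new} gives ``Definition \ref{old} $\Rightarrow$ Definition \ref{new}'', and Proposition \ref{new_>old} gives the converse. So the plan is to invoke these two results and check that their hypotheses match the two sides of the equivalence, including the same automorphism $\Phi$ of $\Theta^{0}$.

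For the direction from Definition \ref{old} to Definition \ref{new}, I would take the automorphism $\Phi$ provided by Definition \ref{old} and choose a system of bijections $s^{\Phi}_{F(X)}$ satisfying (\ref{autom_action}); this exists by the cited results, and by (\ref{alpha}) it realises $\alpha(\Phi)$. I would then define $\Lambda$ by (\ref{delta_ob}), (\ref{lambda_mor}) and $\Psi$ by (\ref{psi_ob}), (\ref{psi_mor_in}). The remaining properties are supplied by earlier propositions:
\begin{itemize}
\item functoriality of $\Lambda$ and $\Psi$ by Propositions \ref{lambda_functor} and \ref{psi_functor};
\item commutativity of the two squares by Propositions \ref{left_comm} and \ref{right_comm};
\item the isomorphism property by Propositions \ref{lambda_isom} and \ref{psi_isom}.
\end{itemize}
The one subtle point is the definition of $\Psi$ on morphisms. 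It rests on Proposition \ref{psi_mor_prop}, which shows the value is independent of the chosen lift $\mu$ of $\varphi$; this is exactly where the coordination condition (\ref{old_1})$\Leftrightarrow$(\ref{old_2}) is used. Proposition \ref{psi_isom} also needs Remark \ref{phi_inv_eq}, so that $\Phi^{-1}$ yields inverse functors, together with the surjectivity of $\mathcal{FR}_{H_1}$ from Proposition \ref{FR_epi}.

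For the converse, I would start from $\Lambda,\Phi,\Psi$ as in Definition \ref{new}. Proposition \ref{left_diagram} extracts the bijections $\alpha(\Phi)_{F(X)}$ from $\Lambda$ on objects, and Proposition \ref{left_diagram_mor} shows $\Lambda$ acts as $\Phi$ on morphisms. To get the coordination condition, I would regard any $\mu_1,\mu_2$ as morphisms out of $(F(X_1),(\Delta_{F(X_1)})''_{H_1})$ via Proposition \ref{allTheta_0_morpfism}. Right-square commutativity then converts equality of $\mathcal{FR}_{H_1}(\mu_i)$ into equality of $\mathcal{FR}_{H_2}(\Phi(\mu_i))$. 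The direction from $H_2$ back to $H_1$ uses that $\Psi$ is injective on morphisms, and Remark \ref{epi} cancels the natural epimorphisms. This is Proposition \ref{new_>old}.

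No step is a real obstacle, since all the work is in the preceding propositions. The only care needed is to confirm that the $\Phi$ in both definitions is literally the same automorphism. It is in both directions, because Proposition \ref{old_>new} uses the given $\Phi$ and Proposition \ref{new_>old} produces $\alpha(\Phi)$ for the given $\Phi$. Combining the two propositions proves Theorem \ref{main}.
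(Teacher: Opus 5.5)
Your proposal is correct and follows the paper's route exactly: there, Theorem~\ref{main} is obtained simply by combining Proposition~\ref{old_>new} and Proposition~\ref{new_>old}. Your account of what goes into those two propositions faithfully reflects the paper's arguments. This covers the independence of the lift $\mu$ in Proposition~\ref{psi_mor_prop} via the coordination condition, and Remark~\ref{phi_inv_eq} together with Proposition~\ref{FR_epi} for the invertibility of $\Lambda$ and $\Psi$. For the converse, it covers viewing $\mu_1,\mu_2$ as morphisms out of $\left(F(X_1),\left(\Delta_{F(X_1)}\right)^{\prime\prime}_{H_1}\right)$ via Proposition~\ref{allTheta_0_morpfism} and using the injectivity of $\Psi$ on morphisms.
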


\section{First corollaries}

\setcounter{equation}{0}

We have from Theorem \ref{main} the

\begin{corollary}
If two algebras $H_{1},H_{2}\in \Theta $ are subjects of Definition \ref{new}%
, then isomorphisms $\Lambda \ $and $\Psi $ are uniquely defined by
automorphism $\Phi $.
\end{corollary}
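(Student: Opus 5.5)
The plan is to show that any triple $\left( \Lambda ,\Phi ,\Psi \right)$ from Definition \ref{new} is forced, once $\Phi$ is fixed, to have $\Lambda$ and $\Psi$ of a specific form determined only by $\Phi$.

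\textbf{Step 1: $\Lambda$ on morphisms.} Proposition \ref{left_diagram_mor} already gives $\Lambda \left( \varphi \right) =\Phi \left( \varphi \right)$ for every morphism $\varphi$ of $\mathfrak{Cl}_{H_{1}}$. So $\Lambda$ on morphisms depends only on $\Phi$.

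\textbf{Step 2: $\Lambda$ on objects.} By Proposition \ref{left_diagram}, $\Lambda \left( F\left( X\right) ,T\right) =\left( \Phi \left( F\left( X\right) \right) ,\alpha (\Phi )_{F\left( X\right) }\left( T\right) \right)$ with bijections $\alpha (\Phi )_{F\left( X\right) }$. We then bring in Theorem \ref{main}: the algebras satisfy Definition \ref{old} with this same $\Phi$. The bijections $\alpha (\Phi )$ constructed in Proposition \ref{left_diagram} are exactly the ones witnessing Definition \ref{old} in the proof of Proposition \ref{new_>old}. By Remark \ref{alpha_unic}, such bijections are uniquely determined by $\Phi$, and by (\ref{alpha}) they equal $s_{F\left( X\right) }^{\Phi }\left( T\right)$. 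Hence $\Lambda$ on objects coincides with (\ref{delta_ob}), and $\Lambda$ is completely determined by $\Phi$.

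\textbf{Step 3: $\Psi$.} Suppose $\Psi _{1},\Psi _{2}$ both close the right diagram with the same $\Lambda$, which is unique by Step 2. Then
\[
\Psi _{1}\mathcal{FR}_{H_{1}}=\mathcal{FR}_{H_{2}}\Lambda =\Psi _{2}\mathcal{FR}_{H_{1}} .
\]
Since $\mathcal{FR}_{H_{1}}$ is surjective (Proposition \ref{FR_epi}), Remark \ref{epi_f} gives $\Psi _{1}=\Psi _{2}$. Concretely, $\Psi$ is the functor defined by (\ref{psi_ob}) and (\ref{psi_mor_in}).

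\textbf{Main obstacle.} The delicate point is Step 2. We must check that the bijections $\alpha (\Phi )$ extracted from $\Lambda$ really satisfy the coordination condition of Definition \ref{old} for the given $\Phi$. This is precisely what the proof of Proposition \ref{new_>old} established. Only then can the uniqueness result cited in Remark \ref{alpha_unic} be invoked; the rest is formal.
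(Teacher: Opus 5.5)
Your proposal is correct and follows the paper's proof essentially step for step. You determine $\Lambda$ on morphisms from (\ref{Lambda_phi}), and $\Lambda$ on objects from (\ref{lambda_new}) together with Theorem \ref{main} and Remark \ref{alpha_unic}. You then get uniqueness of $\Psi$ from $\Psi_1\mathcal{FR}_{H_1}=\mathcal{FR}_{H_2}\Lambda=\Psi_2\mathcal{FR}_{H_1}$ via Proposition \ref{FR_epi} and Remark \ref{epi_f}.

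You also note that the specific bijections $\alpha(\Phi)_{F(X)}$ extracted from $\Lambda$ must be shown to witness Definition \ref{old}, which the proof of Proposition \ref{new_>old} does. This makes explicit a point the paper passes over by simply citing Theorem \ref{main}.
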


\begin{proof}
If two algebras $H_{1},H_{2}\in \Theta $ are subjects of Definition \ref{new}
then, the $\Lambda \left( F\left( X\right) ,T\right) =\left( \Phi \left(
F\left( X\right) \right) ,\alpha (\Phi )_{F\left( X\right) }\left( T\right)
\right) $ holds for every $\left( F\left( X\right) ,T\right) \in \mathrm{Ob}%
\mathfrak{Cl}_{H_{1}}$ by (\ref{lambda_new}). Algebras $H_{1},H_{2}\in
\Theta $ are also subjects of Definition \ref{old} by Theorem \ref{main}.
So, the bijections%
\begin{equation*}
\alpha (\Phi )_{F\left( X\right) }:Cl_{H_{1}}(F\left( X\right) )\rightarrow
Cl_{H_{2}}(\Phi (F\left( X\right) ))
\end{equation*}%
for every $F\left( X\right) \in \mathrm{Ob}\Theta ^{0}$ are uniquely defined
by automorphism $\Phi $ by Remark \ref{alpha_unic}. Therefore $\Lambda
\left( F\left( X\right) ,T\right) $ is uniquely defined by automorphism $%
\Phi $. Also, by (\ref{Lambda_phi}), $\Lambda \left( \varphi \right) $
uniquely defined by automorphism $\Phi $ for every%
\begin{equation*}
\left( F\left( X_{1}\right) ,T_{1}\right) ,\left( F\left( X_{2}\right)
,T_{2}\right) \in \mathrm{Ob}\mathfrak{Cl}_{H_{1}}
\end{equation*}%
and every%
\begin{equation*}
\varphi \in \mathrm{Mor}_{\mathfrak{Cl}_{H_{1}}}\left( \left( F\left(
X_{1}\right) ,T_{1}\right) ,\left( F\left( X_{2}\right) ,T_{2}\right)
\right) .
\end{equation*}%
So, the isomorphism $\Lambda $ uniquely defined by automorphism $\Phi $.

We suppose that $\Psi _{1},\Psi _{2}:\mathfrak{Cor}_{H_{1}}\rightarrow 
\mathfrak{Cor}_{H_{2}}$ are two isomorphisms which close commutative the
right diagram of Definition \ref{new}. It means that the%
\begin{equation}
\Psi _{1}\mathcal{FR}_{H_{1}}=\mathcal{FR}_{H_{2}}\Lambda =\Psi _{2}\mathcal{%
FR}_{H_{1}}  \label{left_diagr_commut_1_2}
\end{equation}%
So, $\Psi _{1}=\Psi _{2}$ by Proposition \ref{FR_epi} and Remark \ref{epi_f}%
. Therefore, the isomorphism $\Psi $ which closes commutative the right
diagram of Definition \ref{new} uniquely defined by isomorphism $\Lambda $,
which is uniquely defined by automorphism $\Phi $.
\end{proof}

This Corollary justifies the name: automorphic equivalence. And it
justifies, if we talk about two algebras $H_{1},H_{2}\in \Theta $, which are
subjects of Definition \ref{new}, that automorphic equivalence of this
algebras provided by automorphism\textit{\ }$\Phi $ of the category $\Theta
^{0}$.

The new definition allows us to more easily prove the following

\begin{proposition}
The identity automorphism\textit{\ }of the category $\Theta ^{0}$ provides
the automorphic equivalence of every algebra $H\in \Theta $ to itself. If
automorphic equivalence of two algebras $H_{1}\ $and $H_{2}$ from variety $%
\Theta $ provided by automorphism\textit{\ }$\Phi $ of the category $\Theta
^{0}$, then the automorphism\textit{\ }$\Phi ^{-1}$ provides the automorphic
equivalence of two algebras $H_{2}$\ and $H_{1}$. If automorphisms\textit{\ }%
$\Phi ,\Psi $ of the category $\Theta ^{0}$ provide the automorphic
equivalence of algebras $H_{1}\ $and $H_{2}$ and algebras $H_{2}\ $and $%
H_{3} $ respectively then the automorphism\textit{\ }$\Psi \Phi $ provides
the automorphic equivalence of algebras $H_{1}$\ and $H_{3}$.
\end{proposition}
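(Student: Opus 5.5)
We work directly with Definition \ref{new}. Following the Corollary above, we say that $\Phi$ provides the automorphic equivalence of $H_1$ and $H_2$ when there are isomorphisms $\Lambda$ and $\Psi$ such that, together with $\Phi$, both diagrams of Definition \ref{new} commute. The three claims (reflexivity, symmetry, transitivity) then reduce to elementary manipulations of commutative squares of functors.

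\emph{Reflexivity.} Take $\Lambda=id_{\mathfrak{Cl}_{H}}$, $\Phi=id_{\Theta^{0}}$ and $\Psi=id_{\mathfrak{Cor}_{H}}$. Both squares commute trivially, since $\mathcal{FG}_{H}\,id=id\,\mathcal{FG}_{H}$ and $\mathcal{FR}_{H}\,id=id\,\mathcal{FR}_{H}$.

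\emph{Symmetry.} Suppose $\Lambda,\Phi,\Psi$ witness the equivalence of $H_1$ and $H_2$. We take $\Lambda^{-1},\Phi^{-1},\Psi^{-1}$, which are again isomorphisms. For the left square we reuse the computation already made in the proof of Proposition \ref{left_diagram}:
\[
\Phi^{-1}\mathcal{FG}_{H_2}=\Phi^{-1}\mathcal{FG}_{H_2}\Lambda\Lambda^{-1}=\Phi^{-1}\Phi\mathcal{FG}_{H_1}\Lambda^{-1}=\mathcal{FG}_{H_1}\Lambda^{-1}.
\]
The right square is handled the same way:
\[
\Psi^{-1}\mathcal{FR}_{H_2}=\Psi^{-1}\mathcal{FR}_{H_2}\Lambda\Lambda^{-1}=\Psi^{-1}\Psi\mathcal{FR}_{H_1}\Lambda^{-1}=\mathcal{FR}_{H_1}\Lambda^{-1}.
\]
Hence $\Phi^{-1}$ provides the automorphic equivalence of $H_2$ and $H_1$. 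This recovers Remark \ref{phi_inv_eq} without invoking \cite{TsurkovManySorted}.

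\emph{Transitivity.} Let $(\Lambda_1,\Phi,\Psi_1)$ witness the equivalence of $H_1$ and $H_2$, and $(\Lambda_2,\Psi,\Psi_2)$ witness that of $H_2$ and $H_3$. Here $\Psi$ is the automorphism of $\Theta^0$ named in the statement, so the notation clashes with the paper's $\Psi$ for the coordinate-algebra functor; I would rename one of them in the write-up. We take the composites $\Lambda_2\Lambda_1$, $\Psi\Phi$ and $\Psi_2\Psi_1$, each an isomorphism. Pasting the squares gives
\[
\mathcal{FG}_{H_3}\Lambda_2\Lambda_1=\Psi\mathcal{FG}_{H_2}\Lambda_1=\Psi\Phi\mathcal{FG}_{H_1}
\]
and
\[
\mathcal{FR}_{H_3}\Lambda_2\Lambda_1=\Psi_2\mathcal{FR}_{H_2}\Lambda_1=\Psi_2\Psi_1\mathcal{FR}_{H_1}.
\]

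There is no real obstacle: everything is horizontal pasting and inversion of commutative squares. The only points needing care are the notational collision just mentioned, and noting that Condition \ref{monoiso} is not needed for these claims.
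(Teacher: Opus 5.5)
Your proof is correct and follows the paper's intended argument: the paper only says it suffices to consider the commutative diagrams of Definition \ref{new}, and you carry out the inversion and pasting of those squares explicitly. One small caveat about your aside: this gives symmetry for Definition \ref{new}, but recovering Remark \ref{phi_inv_eq} (a statement about Definition \ref{old}) through Theorem \ref{main} would be circular, because the paper's proof of Proposition \ref{old_>new} already uses that remark (in Propositions \ref{lambda_isom} and \ref{psi_isom}) and also uses (\ref{alpha}) from \cite{TsurkovManySorted}.
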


To prove this proposition, it suffices to consider the commutative diagrams
from Definition \ref{new}.

We have from this proposition the

\begin{corollary}
Automorphic equivalence of algebras in $\Theta $ is an equivalence in the
class of all the algebras of the variety $\Theta $.
\end{corollary}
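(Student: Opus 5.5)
The corollary is that automorphic equivalence is an equivalence relation on the algebras of $\Theta$. I will derive it directly from the preceding Proposition, checking reflexivity, symmetry and transitivity separately. By the Corollary from Theorem \ref{main}, saying that $H_{1}$ and $H_{2}$ are automorphically equivalent is the same as saying that some automorphism $\Phi$ of $\Theta ^{0}$ provides it, with $\Lambda$ and $\Psi$ determined by $\Phi$. So I only need to produce, in each case, a triple of isomorphisms making both diagrams of Definition \ref{new} commute.

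\emph{Reflexivity.} Take $\Lambda =id_{\mathfrak{Cl}_{H}}$, $\Phi =id_{\Theta ^{0}}$ and $\Psi =id_{\mathfrak{Cor}_{H}}$. Both squares commute trivially, since $\mathcal{FG}_{H}\,id=id\,\mathcal{FG}_{H}$ and $\mathcal{FR}_{H}\,id=id\,\mathcal{FR}_{H}$.

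\emph{Symmetry.} Given $(\Lambda ,\Phi ,\Psi )$ for $H_{1},H_{2}$, take $(\Lambda ^{-1},\Phi ^{-1},\Psi ^{-1})$. The left square was already inverted in the proof of Proposition \ref{left_diagram}:
\[
\Phi ^{-1}\mathcal{FG}_{H_{2}}=\Phi ^{-1}\mathcal{FG}_{H_{2}}\Lambda \Lambda ^{-1}=\mathcal{FG}_{H_{1}}\Lambda ^{-1}.
\]
The same manipulation gives $\Psi ^{-1}\mathcal{FR}_{H_{2}}=\mathcal{FR}_{H_{1}}\Lambda ^{-1}$.

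\emph{Transitivity.} Suppose $(\Lambda _{1},\Phi ,\Psi _{1})$ works for $H_{1},H_{2}$ and $(\Lambda _{2},\Phi ^{\prime },\Psi _{2})$ works for $H_{2},H_{3}$. Take the composite triple $(\Lambda _{2}\Lambda _{1},\Phi ^{\prime }\Phi ,\Psi _{2}\Psi _{1})$ and paste the squares:
\[
\mathcal{FG}_{H_{3}}\Lambda _{2}\Lambda _{1}=\Phi ^{\prime }\mathcal{FG}_{H_{2}}\Lambda _{1}=\Phi ^{\prime }\Phi \mathcal{FG}_{H_{1}},
\]
and likewise for $\mathcal{FR}$. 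Composites of isomorphisms of categories are isomorphisms, so this triple satisfies Definition \ref{new} for $H_{1},H_{3}$.

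There is no real obstacle. The only point needing care is that the relation should be defined by mere existence of a triple; Theorem \ref{main} and its Corollary guarantee this matches the intended notion, so the three checks suffice. One should also note that we range over a class of algebras, but an equivalence relation on a class needs nothing beyond these three properties.
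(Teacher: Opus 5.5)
Your proof is correct and follows the paper's route: the paper derives the corollary from the preceding proposition (the identity automorphism, $\Phi^{-1}$, and the composite automorphism provide the respective equivalences), and justifies that proposition exactly as you do, by checking the two commutative squares of Definition \ref{new} for the identity, inverse, and composite triples. Your appeal to Theorem \ref{main} and its Corollary is harmless but unnecessary, since Definition \ref{new} itself defines automorphic equivalence as the existence of such a triple.
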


\begin{definition}
\label{inner}An automorphism $\Upsilon $ of an arbitrary category $\mathfrak{%
K}$ is \textbf{inner}, if it is isomorphic as a functor to the identity
automorphism of the category $\mathfrak{K}$.
\end{definition}

It means that there exists a system of isomorphisms%
\begin{equation*}
\left\{ \sigma _{F\left( X\right) }^{\Upsilon }:F\left( X\right) \rightarrow
\Upsilon \left( F\left( X\right) \right) \mid F\left( X\right) \in \mathrm{Ob%
}\mathfrak{K}\right\} 
\end{equation*}%
such that for every $\mu \in \mathrm{Mor}_{\mathfrak{K}}\left( F\left(
X_{1}\right) ,F\left( X_{2}\right) \right) $ the diagram%
\begin{equation*}
\begin{array}{ccc}
F\left( X_{1}\right)  & \overrightarrow{\sigma _{F\left( X_{1}\right)
}^{\Upsilon }} & \Upsilon \left( F\left( X_{1}\right) \right)  \\ 
\downarrow \mu  &  & \Upsilon \left( \mu \right) \downarrow  \\ 
F\left( X_{2}\right)  & \underrightarrow{\sigma _{F\left( X_{2}\right)
}^{\Upsilon }} & \Upsilon \left( F\left( X_{2}\right) \right) 
\end{array}%
\end{equation*}%
\noindent commutes. If we take $\mathfrak{K}=\Theta ^{0}$ and compare the
definition of inner automorphism with condition (\ref{autom_action}), which
fulfills for every automorphism of the category $\Theta ^{0}$, we will find
that for an inner automorphism the role of a system of bijections can be
played by a system of isomorphisms. It is easy to prove that the set of all
inner automorphisms of an arbitrary category $\mathfrak{K}$ is a normal
subgroup of the group of all automorphisms of this category.

We know (\cite[Proposition 9]{PlotkinSame}, \cite[Theorem 4.2]%
{TsurkovManySorted}) that if an inner automorphism $\Upsilon $ of the
category $\Theta ^{0}$, where $\Theta $ is some variety of algebras,
provides the automorphic equivalence of the algebras $H_{1}$ and $H_{2}$,
where $H_{1},H_{2}\in \Theta $, then $H_{1}$ and $H_{2}$ are geometrically
equivalent. So the difference between geometric equivalence and automorphic
equivalence in the variety $\Theta $ can exist only when the quotient group $%
\mathfrak{A/Y}$ is not a trivial, where $\mathfrak{A}$ is the group of all
the automorphisms of the category $\Theta ^{0}$ and $\mathfrak{Y}$ is the
group of all the inner automorphisms of this category. For this reason, the
study of this quotient group is so important in universal algebraic geometry.

\section{Method of verbal operations for computation the group $\mathfrak{A/Y%
}$ and some conclusions.}

This method was elaborated in \cite{PlZhit} and refined in \cite%
{TsurkovManySorted}. All definitions and results that we present in this
section with out special citations were published in the two above mentioned
articles.

\begin{definition}
\label{str_st}\textit{An automorphism }$\Phi $\textit{\ of the category }$%
\Theta ^{0}$\textit{\ is called \textbf{strongly stable} if}
\end{definition}

\begin{enumerate}
\item $\Phi \left( F\left( X\right) \right) =F\left( X\right) $\textit{\ for
every }$F\left( X\right) \in \mathrm{Ob}\Theta ^{0}$\textit{\ and }

\item \textit{there exists a system of bijections}%
\begin{equation*}
\left\{ s_{F\left( X\right) }^{\Phi }:F\left( X\right) \rightarrow \Phi
\left( F\left( X\right) \right) \mid F\left( X\right) \in \mathrm{Ob}\Theta
^{0}\right\} 
\end{equation*}%
\textit{such that for every }$\mu \in \mathrm{Hom}\left( F\left(
X_{1}\right) ,F\left( X_{2}\right) \right) $\textit{\ the (\ref{autom_action}%
) holds\ and}%
\begin{equation*}
s_{F\left( X\right) }^{\Phi }\mid _{X}=id_{X},
\end{equation*}%
\textit{for every }$F\left( X\right) \in \mathrm{Ob}\Theta ^{0}$.
\end{enumerate}

The set of all strongly stable automorphisms of the category $\Theta ^{0}$
forms a group. We denote this group by $\mathfrak{S}$.

\begin{theorem}
The $\mathfrak{A=YS}$ holds.
\end{theorem}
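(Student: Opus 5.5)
Given an arbitrary automorphism $\Phi\in\mathfrak{A}$, I will factor it as $\Phi=\Upsilon\Sigma$ with $\Upsilon\in\mathfrak{Y}$ and $\Sigma\in\mathfrak{S}$. The inclusion $\mathfrak{YS}\subseteq\mathfrak{A}$ is trivial, since both are subgroups of the group of automorphisms.

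\textbf{Step 1: $\Phi$ preserves ranks.} I will first show that $\Phi(F(X))\cong F(X)$ for every $F(X)\in\mathrm{Ob}\Theta^0$. By Condition \ref{monoiso}, $\Phi(F(x_1))\cong F(x_1)$. Now $F(X)$ is the coproduct of $|X|$ copies of $F(x_1)$ in $\Theta^0$. An automorphism of categories preserves coproducts, so $\Phi(F(X))$ is the coproduct of $|X|$ copies of $\Phi(F(x_1))\cong F(x_1)$. Hence $\Phi(F(X))\cong F(X)$.

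\textbf{Step 2: the inner factor $\Upsilon$.} For each $F(X)$ I choose an isomorphism $\sigma_{F(X)}:F(X)\to\Phi(F(X))$, built from the coproduct structure. Concretely, let $\iota_x:F(x_1)\to F(X)$ be the embeddings $x_1\mapsto x$, and fix once and for all an isomorphism $c:F(x_1)\to\Phi(F(x_1))$. I define $\sigma_{F(X)}$ by $x\mapsto\Phi(\iota_x)(c(x_1))$. This is the canonical comparison isomorphism between two coproduct presentations of $\Phi(F(X))$.

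I then define $\Upsilon$ on objects by $\Upsilon(F(X))=\Phi(F(X))$, and on morphisms $\mu:F(X_1)\to F(X_2)$ by
\[
\Upsilon(\mu)=\sigma_{F(X_2)}\,\mu\,\sigma_{F(X_1)}^{-1}.
\]
This is an automorphism of $\Theta^0$. Its inverse sends $F(Y)$ to $\Phi^{-1}(F(Y))$ and conjugates back, which is legitimate because $\Phi$ is bijective on objects. By construction $\Upsilon$ is isomorphic to the identity functor via $\sigma$, so $\Upsilon\in\mathfrak{Y}$ by Definition \ref{inner}.

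\textbf{Step 3: the strongly stable factor $\Sigma$.} I put $\Sigma=\Upsilon^{-1}\Phi$. Then $\Sigma(F(X))=F(X)$, so condition 1 of Definition \ref{str_st} holds. By (\ref{autom_action}), $\Phi$ acts through a system of bijections $s^\Phi$. A direct computation then shows that $\Sigma$ acts through the bijections $\sigma_{F(X)}^{-1}s^\Phi_{F(X)}:F(X)\to F(X)$.

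It remains to check $s_{F(X)}^{\Sigma}|_X=id_X$. This is the main obstacle, for two reasons:
\begin{itemize}
\item the system $s^\Phi$ is not unique (Remark \ref{alpha_unic});
\item the bijections need not send variables to variables a priori.
\end{itemize}

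My first approach is to use the freedom in $s^\Phi$. Following the construction in \cite{PlZhit}, \cite[Theorem 2.1]{TsurkovManySorted}, $s^\Phi_{F(X)}(f)$ is defined through morphisms from $F(x_1)$. Namely, $f\in F(X)$ corresponds to $\mu_f:F(x_1)\to F(X)$ with $x_1\mapsto f$, and
\[
s^\Phi_{F(X)}(f)=\Phi(\mu_f)(c(x_1)),
\]
for the same fixed $c$ as in Step 2. With this choice, taking $f=x\in X$ gives $\mu_x=\iota_x$, so $s^\Phi_{F(X)}(x)=\Phi(\iota_x)(c(x_1))=\sigma_{F(X)}(x)$. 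Hence $\sigma_{F(X)}^{-1}s^\Phi_{F(X)}$ fixes $X$ pointwise.

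Before concluding, I must verify that this choice of $s^\Phi$ does satisfy (\ref{autom_action}). This is the naturality computation $\Phi(\nu)\Phi(\mu_f)=\Phi(\mu_{\nu(f)})$, which follows from $\nu\mu_f=\mu_{\nu(f)}$. Granted this, $\Sigma\in\mathfrak{S}$, and therefore $\Phi=\Upsilon\Sigma\in\mathfrak{YS}$.
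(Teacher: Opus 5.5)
Your proposal is correct and follows essentially the standard argument of \cite{PlZhit} and \cite{TsurkovManySorted}; the paper itself gives no proof and defers to those sources. The steps match: define $s^\Phi_{F(X)}(f)=\Phi(\mu_f)(c(x_1))$, note that $x\mapsto s^\Phi_{F(X)}(x)$ extends to an isomorphism $\sigma_{F(X)}$ because $\Phi$ preserves coproducts, and set $\Sigma=\Upsilon^{-1}\Phi$. The one point you leave implicit is that $s^\Phi_{F(X)}$ is a bijection; this holds because $f\mapsto\mu_f$, $\Phi$ on hom-sets, and evaluation at the free generator $c(x_1)$ of $\Phi(F(x_1))$ are each bijective.
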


Therefore $\mathfrak{A/Y\cong S/S\cap Y}$. So, the most important thing in
the study of the group $\mathfrak{A/Y}$ is the calculation of the group $%
\mathfrak{S}$.

For every word $w=w\left( x_{1},\ldots ,x_{n}\right) \in F\left(
x_{1},\ldots ,x_{n}\right) \in \mathrm{Ob}\Theta ^{0}$ and every $H\in
\Theta $ we can define an operation $w_{H}^{\ast }$ with arity $\rho \left(
w_{H}^{\ast }\right) =n$: if $h_{1},\ldots ,h_{n}\in H$ then $w_{H}^{\ast
}\left( h_{1},\ldots ,h_{n}\right) =w\left( h_{1},\ldots ,h_{n}\right) $.

We denote by $\Omega $ the signature of our variety $\Theta $ and we denote
for every $\omega \in \Omega $ arity of this operation by $\rho \left(
\omega \right) =n_{\omega }$. We can consider the system of words $W=\left\{
w_{\omega }\mid \omega \in \Omega \right\} $. If $w_{\omega }\in F\left(
x_{1},\ldots ,x_{n_{\omega }}\right) =F_{n_{\omega }}\in \mathrm{Ob}\Theta
^{0}$ for every $\omega \in \Omega $, then for every $H\in \Theta $ we can
consider the new realization of our signature $\omega _{H}^{\ast }\left(
h_{1},\ldots ,h_{n_{\omega }}\right) =\left( w_{\omega }\right) _{H}^{\ast
}\left( h_{1},\ldots ,h_{n_{\omega }}\right) $ where $\omega \in \Omega $, $%
h_{1},\ldots ,h_{n_{\omega }}\in H$. The set $H$ with this new realization
of our signature we denote by $H_{W}^{\ast }$.

\begin{definition}
\label{applicable}\textit{We say that a system of words }$W=\left\{
w_{\omega }\mid \omega \in \Omega \right\} $ \textit{is \textbf{applicable}
if}
\end{definition}

1. \textit{for every }$\omega \in \Omega $\textit{\ the }$w_{\omega }\in
F_{n_{\omega }}$\textit{\ holds and}

2. \textit{there exists a system of bijections}%
\begin{equation*}
\left\{ s_{F\left( X\right) }:F\left( X\right) \rightarrow F\left( X\right)
\mid F\left( X\right) \in \mathrm{Ob}\Theta ^{0}\right\} 
\end{equation*}%
\textit{such that}%
\begin{equation*}
\left( s_{F\left( X\right) }\right) _{\mid X}=id_{X}
\end{equation*}%
\textit{and}%
\begin{equation*}
s_{F\left( X\right) }:F\left( X\right) \rightarrow \left( F\left( X\right)
\right) _{W}^{\ast }
\end{equation*}%
\textit{are isomorphisms.}

And the following proposition is central to the Method of verbal operations.

\begin{proposition}
There is a bijection between the group $\mathfrak{S}$ and the family of all
applicable systems of words. If $\Phi =\Phi _{W}$ is a strongly stable
automorphism of $\Theta ^{0}$, which corresponds to the applicable systems
of words $W$, the (\ref{autom_action}) holds for bijections which are
subjects of 2. from Definition \ref{applicable}. Vice versa, the applicable
systems of words $W_{\Phi }$, which corresponds to the strongly stable
automorphism $\Phi $ of $\Theta ^{0}$, is defined by%
\begin{equation*}
w_{\omega }\left( x_{1},\ldots ,x_{n_{\omega }}\right) =s_{F_{n_{\omega
}}}\left( \omega \left( x_{1},\ldots ,x_{n_{\omega }}\right) \right) \in
F_{n_{\omega }}
\end{equation*}%
for every $\omega \in \Omega $, where $s_{F_{\omega }}$ are subjects of 2.
from Definition \ref{str_st}.
\end{proposition}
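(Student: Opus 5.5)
The plan is to build both maps between $\mathfrak{S}$ and the family of applicable systems of words, and then check that they are mutually inverse.

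\emph{From a strongly stable automorphism to words.} Let $\Phi\in\mathfrak{S}$, with bijections $s_{F(X)}$ as in Definition \ref{str_st}. For each $\omega\in\Omega$ set
\[
w_\omega=s_{F_{n_\omega}}\bigl(\omega(x_1,\ldots,x_{n_\omega})\bigr)\in F_{n_\omega},
\]
which gives condition 1 of Definition \ref{applicable}. For condition 2, take the same bijections $s_{F(X)}$; they already fix $X$. We must show that $s_{F(X)}:F(X)\to F(X)^*_W$ is a homomorphism. Given $f_1,\ldots,f_n\in F(X)$, let $\mu:F_{n}\to F(X)$ be the homomorphism with $x_i\mapsto f_i$. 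By (\ref{autom_action}),
\[
\Phi(\mu)=s_{F(X)}\,\mu\, s_{F_n}^{-1}.
\]
Evaluating on $w_\omega$ gives
\[
\Phi(\mu)(w_\omega)=s_{F(X)}\bigl(\omega(f_1,\ldots,f_n)\bigr).
\]
Since $\Phi(\mu)$ is a homomorphism with $\Phi(\mu)(x_i)=s_{F(X)}\mu s_{F_n}^{-1}(x_i)=s_{F(X)}(f_i)$, we also get
\[
\Phi(\mu)(w_\omega)=w_\omega\bigl(s(f_1),\ldots,s(f_n)\bigr)=\omega^*_{F(X)}\bigl(s(f_1),\ldots,s(f_n)\bigr).
\]
Comparing the two expressions shows that $s_{F(X)}$ preserves $\omega$, so it is an isomorphism onto $F(X)^*_W$.

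\emph{From words to a strongly stable automorphism.} Let $W$ be applicable with bijections $s_{F(X)}$. Define $\Phi_W$ to be the identity on objects and
\[
\Phi_W(\mu)=s_{F(X_2)}\,\mu\, s_{F(X_1)}^{-1}.
\]
The key step is to check that this composite is a homomorphism $F(X_1)\to F(X_2)$ in the original signature. I would argue as follows. The map $\mu$ is a homomorphism $F(X_1)\to F(X_2)$, so it is also a homomorphism $F(X_1)^*_W\to F(X_2)^*_W$, because verbal operations are preserved by homomorphisms. Hence $\Phi_W(\mu)$ is a homomorphism $F(X_1)\to F(X_2)$. Functoriality is immediate from the conjugation formula, and the inverse functor is conjugation by $s^{-1}$, so $\Phi_W$ is an automorphism. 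It lies in $\mathfrak{S}$ because $s|_X=id_X$.

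\emph{Mutual inverseness.} For $W\mapsto\Phi_W\mapsto W_{\Phi_W}$, since $s$ is an isomorphism onto $F^*_W$ we get
\[
s_{F_n}\bigl(\omega(x_1,\ldots,x_n)\bigr)=\omega^*\bigl(s(x_1),\ldots,s(x_n)\bigr)=w_\omega(x_1,\ldots,x_n),
\]
so the words are recovered. For $\Phi\mapsto W_\Phi\mapsto\Phi_{W_\Phi}$, both automorphisms are conjugation by the same family $s$, so they coincide.

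\emph{Main obstacle: well-definedness.} The bijections attached to $\Phi$, or to $W$, need not be unique, so I must show that the constructions do not depend on the choice. For $\Phi\mapsto W$, let $s$ and $s'$ be two admissible systems. Then $s'^{-1}s$ commutes with every endomorphism of $F(X)$ and fixes $X$. Applying this to the maps $\mu$ above with $f_i=x_i$ should force $s'(\omega(x))=s(\omega(x))$. Concretely: both $s$ and $s'$ are isomorphisms onto the $*$-structure determined by their own words, so their words agree with each other as soon as they agree with $\Phi(\mu)$ evaluated on the generators. I would pin this down through the identity
\[
\Phi(\mu)(w_\omega)=s(\omega(f))
\]
obtained above: taking $f_i=x_i$ and $\mu=id$ gives nothing new, so instead I would use the fact that $s(\omega(x))$ is forced by uniqueness of an isomorphism fixing the generators.

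For $W\mapsto\Phi_W$ the question disappears. An isomorphism $F(X)\to F(X)^*_W$ fixing $X$ is unique, because $X$ generates $F(X)$ and any two homomorphisms agreeing on a generating set are equal. This same uniqueness argument also settles the $\Phi\mapsto W$ direction, since the $s$ for $\Phi$ are isomorphisms onto $F^*_{W}$, by the first part.
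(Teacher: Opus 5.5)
The paper gives no proof of this statement; it refers to \cite{PlZhit} and \cite{TsurkovManySorted}. Your outline is the standard one. Two parts of it are correct: the direction $\Phi\mapsto W_\Phi$ (showing that $s_{F(X)}$ is an isomorphism $F(X)\to F(X)^*_W$) and your two round-trip computations. However, the step you call the key step does not follow as written. Each $s_{F(X)}$ is an isomorphism $F(X)\to F(X)^*_W$. So composing homomorphisms along $F(X_1)^*_W\to F(X_1)\to F(X_2)\to F(X_2)^*_W$ only shows that $s_{F(X_2)}\mu s_{F(X_1)}^{-1}$ is a homomorphism $F(X_1)^*_W\to F(X_2)^*_W$. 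The fact that $\mu$ also respects the verbal operations gives you something different: that $s_{F(X_2)}^{-1}\mu s_{F(X_1)}$ is a homomorphism $F(X_1)\to F(X_2)$, which is conjugation in the opposite direction. The correct argument needs the freeness of $F(X_1)$ and the condition $s_{F(X)}|_X=id_X$, neither of which you use here. For instance:
\begin{itemize}
\item Let $\lambda:F(X_1)\to F(X_2)$ be the homomorphism with $\lambda(x)=s_{F(X_2)}(\mu(x))$ for $x\in X_1$.
\item Then $s_{F(X_2)}^{-1}\lambda s_{F(X_1)}$ is a homomorphism $F(X_1)\to F(X_2)$. It agrees with $\mu$ on $X_1$ because $s_{F(X_1)}$ fixes $X_1$, hence it equals $\mu$.
\item Therefore $s_{F(X_2)}\mu s_{F(X_1)}^{-1}=\lambda$ is a homomorphism.
\end{itemize}
Equivalently: $F(X_1)^*_W$ is free over the same basis $X_1$, so $\mathrm{Hom}(F(X_1),F(X_2))=\mathrm{Hom}(F(X_1)^*_W,F(X_2)^*_W)$ as sets of maps.

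Second, your well-definedness argument for $\Phi\mapsto W_\Phi$ is circular. Take two admissible systems $s,s'$ for the same $\Phi$. By your first part they are isomorphisms onto $F(X)^*_W$ and $F(X)^*_{W'}$ respectively, for the words $W,W'$ they produce. Uniqueness of an isomorphism fixing $X$ onto a \emph{fixed} structure $F(X)^*_W$ cannot compare them unless you already know $W=W'$. The ingredient you wrote down first is the right one; it just needs to be finished. The map $c=s'^{-1}s$ satisfies $c_{F(X_2)}\mu=\mu\,c_{F(X_1)}$ for all morphisms $\mu$ of $\Theta^0$ (not only endomorphisms) and fixes generators. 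For $f\in F(X)$ and $\mu:F(x_1)\to F(X)$ with $\mu(x_1)=f$, you get $c_{F(X)}(f)=\mu(c_{F(x_1)}(x_1))=\mu(x_1)=f$. Equivalently, $s_{F(X)}(f)=\Phi(\mu)(x_1)$ is determined by $\Phi$ alone. So the system of bijections of a strongly stable automorphism is unique. This is exactly what makes $\Phi\mapsto W_\Phi$ a well-defined map, and it is what lets your two round-trip identities give the claimed bijection.
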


Also we can conclude from \cite[Proposition 2.1]{FernandesTsur} the
following criterion\ for calculation the group $\mathfrak{S\cap Y}$.

\begin{criterion}
\label{inner_stable}We consider the strongly stable automorphism $\Phi $ of
the category $\Theta ^{0}$ and denote $W_{\Phi }=W$. This automorphism is
inner if and only if there exists $c(x_{1})\in F(x_{1})$ such that for every 
$F\left( X\right) \in \mathrm{Ob}\Theta ^{0}$ the mapping\linebreak $%
c_{F\left( X\right) }:F\left( X\right) \rightarrow \left( F\left( X\right)
\right) _{W}^{\ast }$, which fulfills $c_{F\left( X\right) }(f)=c(f)$ for
every $f\in F\left( X\right) $, is an isomorphism.
\end{criterion}

\begin{theorem}
\label{H_H_w_theorem}\cite[Theorem 5.1]{TsurAutomEqAlg}The automorphism $%
\Phi ^{-1}$, where $\Phi \in \mathfrak{S}$, provides the automorphic
equivalence of algebras $H$ and $H_{W}^{\ast }$, where $H\in \Theta $ and $%
W_{\Phi }=W$.
\end{theorem}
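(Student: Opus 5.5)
By Theorem \ref{main} it is enough to verify Definition \ref{old} for the pair $H$, $H_{W}^{\ast }$ and the automorphism $\Phi ^{-1}$. Let $\{s_{F(X)}\}$ be the bijections from item 2 of Definition \ref{applicable}; they satisfy (\ref{autom_action}) for $\Phi $ and are the identity on $X$. By (\ref{phi_inv_bij}) and $\Phi (F(X))=F(X)$, the bijections for $\Phi ^{-1}$ are $s_{F(X)}^{\Phi ^{-1}}=s_{F(X)}^{-1}$. Guided by (\ref{alpha}), I define
\begin{equation*}
\alpha (\Phi ^{-1})_{F(X)}(T)=s_{F(X)}^{-1}(T).
\end{equation*}

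\textbf{Step 1: translating points.} Since $s_{F(X)}:F(X)\rightarrow (F(X))_{W}^{\ast }$ is an isomorphism, $s_{F(X)}^{-1}:(F(X))_{W}^{\ast }\rightarrow F(X)$ is one as well. Passing to $\ast $-structures is functorial: any $\Theta $-homomorphism $A\rightarrow B$ is also a homomorphism $A_{W}^{\ast }\rightarrow B_{W}^{\ast }$, because the verbal operations are term operations. Hence for $\varphi \in \mathrm{Hom}(F(X),H)$ the composite $\varphi s_{F(X)}$ is a homomorphism $F(X)\rightarrow H_{W}^{\ast }$. Conversely, for $\psi \in \mathrm{Hom}(F(X),H_{W}^{\ast })$ the composite $\psi s_{F(X)}^{-1}$ is a homomorphism $F(X)_{W}^{\ast }\rightarrow H_{W}^{\ast }$. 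To recover a homomorphism $F(X)\rightarrow H$ from it I must undo the $\ast $-construction. The cleanest route is to note that $\psi $ is determined by the images of $X$ and $s$ is the identity on $X$; I will then check that $\psi s_{F(X)}^{-1}$ coincides with the $\Theta $-homomorphism $F(X)\rightarrow H$ taking the same values on $X$. For this I use that $\Phi ^{-1}$ corresponds to the inverse applicable system, so $(H_{W}^{\ast })_{W^{\prime }}^{\ast }=H$.

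This gives a bijection
\begin{equation*}
\mathrm{Hom}(F(X),H)\ni \varphi \mapsto \varphi s_{F(X)}\in \mathrm{Hom}(F(X),H_{W}^{\ast }),
\end{equation*}
with $\ker (\varphi s_{F(X)})=s_{F(X)}^{-1}(\ker \varphi )$. I expect this step to be the main obstacle, precisely because of the inverse direction just described.

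\textbf{Step 2: closed sets.} Taking intersections of kernels over all solutions, the bijection of Step 1 yields $(s^{-1}(T))_{H_{W}^{\ast }}^{\prime \prime }=s^{-1}(T_{H}^{\prime \prime })$ for every $T\subseteq F(X)\times F(X)$. Therefore $T\mapsto s_{F(X)}^{-1}(T)$ maps $Cl_{H}(F(X))$ bijectively onto $Cl_{H_{W}^{\ast }}(F(X))=Cl_{H_{W}^{\ast }}(\Phi ^{-1}(F(X)))$.

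\textbf{Step 3: coordination condition.} Take $\mu _{1},\mu _{2}\in \mathrm{Hom}(F(X_{1}),F(X_{2}))$ and $T\in Cl_{H}(F(X_{2}))$, and write $s_{i}=s_{F(X_{i})}$. Condition (\ref{old_1}) says $\tau \mu _{1}=\tau \mu _{2}$, i.e. $(\mu _{1}(f),\mu _{2}(f))\in T$ for all $f$. By (\ref{autom_action}) for $\Phi ^{-1}$,
\begin{equation*}
\Phi ^{-1}(\mu _{i})=s_{2}^{-1}\mu _{i}s_{1}.
\end{equation*}
Substituting $f=s_{1}(g)$ and using that $s_{1}$ is bijective, the condition becomes $(s_{2}^{-1}\mu _{1}s_{1}(g),s_{2}^{-1}\mu _{2}s_{1}(g))\in s_{2}^{-1}(T)$ for all $g$. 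This is exactly (\ref{old_2}) with $\widetilde{\tau }$ the natural epimorphism onto the quotient by $s_{2}^{-1}(T)$, and every step is reversible. This verifies Definition \ref{old}, and Theorem \ref{main} finishes the proof.
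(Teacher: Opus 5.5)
The paper gives no proof of its own here; it only cites \cite[Theorem 5.1]{TsurAutomEqAlg}. Your argument is correct. Verifying Definition \ref{old} for $H$, $H_{W}^{\ast}$ and $\Phi^{-1}$ with $\alpha(\Phi^{-1})_{F(X)}(T)=s_{F(X)}^{-1}(T)$ works, and Theorem \ref{main} then gives Definition \ref{new}. The real content is Step 2. Step 3 uses only (\ref{autom_action}) and the bijectivity of the $s_{F(X)}$. Two remarks follow.

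\textbf{The inverse direction in Step 1.} This is not actually an obstacle, and you do not need the inverse system $W'$. Given $\psi\in\mathrm{Hom}(F(X),H_{W}^{\ast})$, let $\varphi:F(X)\rightarrow H$ be the $\Theta$-homomorphism with $\varphi|_{X}=\psi|_{X}$.
\begin{itemize}
\item By your own forward argument, $\varphi s_{F(X)}$ is a homomorphism $F(X)\rightarrow H_{W}^{\ast}$.
\item It agrees with $\psi$ on the generating set $X$, because $s_{F(X)}|_{X}=id_{X}$.
\item Two homomorphisms that agree on a generating set are equal, so $\psi=\varphi s_{F(X)}$.
\end{itemize}
Your route through $(H_{W}^{\ast})_{W'}^{\ast}=H$ also works, but that equality needs its own proof. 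One proof: evaluate $w'_{\omega}=s_{F(x_1,\ldots,x_{n_\omega})}^{-1}(\omega(x_1,\ldots,x_{n_\omega}))$ with the $W$-operations in $F(x_1,\ldots,x_{n_\omega})$. Since $s_{F(x_1,\ldots,x_{n_\omega})}$ is an isomorphism onto the $\ast$-structure fixing the $x_i$, this gives $\omega(x_1,\ldots,x_{n_\omega})$. That is an identity of $\Theta$, so it holds in $H$.

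\textbf{Checking $H_{W}^{\ast}\in\Theta$.} You should state this, since Definition \ref{old} and the families $Cl_{H_{W}^{\ast}}(F(X))$ are only set up for algebras of $\Theta$. Let $u=v$ be an identity of $\Theta$ in $x_1,\ldots,x_n$, and let $h_1,\ldots,h_n\in H$.
\begin{itemize}
\item The $\Theta$-homomorphism $F(x_1,\ldots,x_n)\rightarrow H$, $x_i\mapsto h_i$, is also a homomorphism $(F(x_1,\ldots,x_n))_{W}^{\ast}\rightarrow H_{W}^{\ast}$, because verbal operations are term operations.
\item $(F(x_1,\ldots,x_n))_{W}^{\ast}$ is isomorphic to $F(x_1,\ldots,x_n)$ via $s_{F(x_1,\ldots,x_n)}$, so it satisfies $u=v$ at $x_1,\ldots,x_n$.
\item Pushing this forward, $u=v$ holds at $h_1,\ldots,h_n$ in $H_{W}^{\ast}$.
\end{itemize}
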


\begin{theorem}
\cite[Theorem 6.1]{TsurAutomEqAlg}Let algebras $H_{1}$ and $H_{2}$ belong to
the variety $\Theta $. They are automorphically equivalent in $\Theta $ if
and only if the algebra $H_{1}$ is geometrically equivalent to the algebra $%
(H_{2})_{W}^{\ast }$, where $W$ is a \textit{some }applicable\textit{\ }%
system of words.
\end{theorem}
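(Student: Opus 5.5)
We prove the two directions separately, both resting on Theorem \ref{H_H_w_theorem} and on the fact that automorphic equivalence is an equivalence relation (the Corollary above).

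\emph{Sufficiency.} Suppose $H_{1}$ is geometrically equivalent to $(H_{2})_{W}^{\ast }$ for some applicable system of words $W$.
\begin{itemize}
\item By the central Proposition of the Method of verbal operations, $W=W_{\Phi }$ for a unique $\Phi \in \mathfrak{S}$.
\item By Theorem \ref{H_H_w_theorem}, $\Phi ^{-1}$ provides the automorphic equivalence of $H_{2}$ and $(H_{2})_{W}^{\ast }$.
\item Geometric equivalence means $\mathfrak{Cl}_{H_{1}}=\mathfrak{Cl}_{(H_{2})_{W}^{\ast }}$, so also $\mathfrak{Cor}_{H_{1}}=\mathfrak{Cor}_{(H_{2})_{W}^{\ast }}$. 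Taking $\Lambda $, $\Psi $ the identity functors and $\Phi =id_{\Theta ^{0}}$ satisfies Definition \ref{new}, since $\mathcal{FG}$ and $\mathcal{FR}$ depend only on the category $\mathfrak{Cl}$.
\item Composing these equivalences by transitivity and symmetry shows that $H_{1}$ and $H_{2}$ are automorphically equivalent.
\end{itemize}

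\emph{Necessity.} Suppose $H_{1}$ and $H_{2}$ are automorphically equivalent via an automorphism $\Phi \in \mathfrak{A}$.
\begin{itemize}
\item Using $\mathfrak{A}=\mathfrak{YS}$, write $\Phi =\Upsilon \Sigma $ with $\Upsilon \in \mathfrak{Y}$ and $\Sigma \in \mathfrak{S}$. The factor order can be swapped because $\mathfrak{Y}$ is normal.
\item Let $W=W_{\Sigma ^{-1}}$. This is applicable because $\mathfrak{S}$ is a group. By Theorem \ref{H_H_w_theorem} applied to $\Sigma ^{-1}$, the automorphism $\Sigma $ provides the automorphic equivalence of $H_{2}$ and $(H_{2})_{W}^{\ast }$.
\item By the composition and inversion rules of the Proposition on equivalence, $H_{1}$ and $(H_{2})_{W}^{\ast }$ are automorphically equivalent via $\Sigma \Phi ^{-1}$ or its inverse. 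This automorphism equals $\Sigma \Sigma ^{-1}\Upsilon ^{-1}=\Upsilon ^{-1}$ if we factor as $\Phi =\Sigma ^{\prime }\Upsilon ^{\prime }$ appropriately. I would choose the factorization order so that the resulting automorphism is exactly inner; since $\mathfrak{Y}$ is normal, $\Upsilon \Sigma =\Sigma \Upsilon ^{\prime }$, so this choice is available.
\item An automorphic equivalence provided by an inner automorphism implies geometric equivalence (\cite[Proposition 9]{PlotkinSame}, \cite[Theorem 4.2]{TsurkovManySorted}). Hence $H_{1}$ is geometrically equivalent to $(H_{2})_{W}^{\ast }$.
\end{itemize}

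The main obstacle is the bookkeeping of which automorphism provides which equivalence under composition and inversion. The directions of $\Phi $ and $\Phi ^{-1}$ in Theorem \ref{H_H_w_theorem} must be aligned with the chosen factorization so that the final composite is inner rather than merely lying in $\mathfrak{YS}$. I would fix conventions first: "$\Phi $ provides equivalence of $A,B$" means $\Lambda :\mathfrak{Cl}_{A}\rightarrow \mathfrak{Cl}_{B}$ lies over $\Phi $. Then I would compute the composite explicitly from the commutative diagrams of Definition \ref{new}.
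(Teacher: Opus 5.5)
The present paper gives no proof of this statement; it only cites \cite{TsurAutomEqAlg}. Your overall plan is the natural one, and your sufficiency direction is correct. The necessity step, however, fails as written, and the repair you propose (reordering the factors) does not address the actual problem.

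With $\Phi=\Upsilon\Sigma$ and $W=W_{\Sigma^{-1}}$, Theorem \ref{H_H_w_theorem} says that $\Sigma$ provides the equivalence of $H_{2}$ and $(H_{2})_{W}^{\ast}$. The composition rule then yields only $\Sigma\Phi$ (from $H_{1}$ to $(H_{2})_{W}^{\ast}$) or its inverse $\Phi^{-1}\Sigma^{-1}$. The expression $\Sigma\Phi^{-1}$ you compute is not a composite of the available equivalences at all. Moreover,
$\Sigma\Phi=\Sigma\Upsilon\Sigma=(\Sigma\Upsilon\Sigma^{-1})\Sigma^{2}\in\mathfrak{Y}\Sigma^{2}$,
which is inner only when $\Sigma^{2}\in\mathfrak{Y}$, and that does not hold in general. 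Writing $\Phi=\Sigma\Upsilon'$ instead gives $\Sigma\Phi=\Sigma^{2}\Upsilon'$, with the same obstruction. So once $W=W_{\Sigma^{-1}}$ is fixed, no choice of factor order makes the composite inner: the error is in which of $\Sigma,\Sigma^{-1}$ you attach to $W$.

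The fix is to take $W=W_{\Sigma}$, where $\Sigma$ is the strongly stable factor of $\Phi=\Upsilon\Sigma$. Theorem \ref{H_H_w_theorem}, applied to $\Sigma\in\mathfrak{S}$, says that $\Sigma^{-1}$ provides the equivalence of $H_{2}$ and $(H_{2})_{W}^{\ast}$. Composing with $\Phi$ (from $H_{1}$ to $H_{2}$) shows that $\Sigma^{-1}\Phi=\Sigma^{-1}\Upsilon\Sigma$ provides the equivalence of $H_{1}$ and $(H_{2})_{W}^{\ast}$. This automorphism lies in $\mathfrak{Y}$ by normality, so no reordering is needed; if you write $\Phi=\Sigma\Upsilon'$, the composite is exactly $\Upsilon'$. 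The cited result that an inner automorphism yields geometric equivalence then finishes the proof.
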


\section{Examples, conjectures and directions for further research.}

The following example was considered in \cite[Section 5]{Tsurkov}.

\begin{example}
\normalfont$\Theta $ is a variety of $k$-linear algebras, where $k$ is a
field with characteristic $0$. This variety has the signature $\Omega
=\left\{ 0,+,\cdot ,\lambda \cdot \left( \lambda \in k\right) \right\} $,
where $0$ is a constant, addition and multiplication $+$ and $\cdot $ have
arity $2$ and $\lambda \cdot $ is an unary fixed scalar $\lambda $
multiplication operation for every $\lambda \in k$. In this variety we have
that $\mathfrak{A/Y\cong }k^{\ast }\mathfrak{\leftthreetimes }\mathrm{Aut}k$%
, where $k^{\ast }=k\setminus \left\{ 0\right\} $, $\mathrm{Aut}k$ is a
group of all automorphisms of the field $k$. Was proved that%
\begin{equation*}
W=\left\{ w_{0}=0,w_{+}\left( x_{1},x_{2}\right) =x_{1}+x_{2},\right. 
\end{equation*}%
\begin{equation*}
\left. w_{\cdot }\left( x_{1},x_{2}\right) =x_{2}\cdot x_{1},w_{\lambda
\cdot }\left( x_{1}\right) =\lambda x_{1}\left( \lambda \in k\right)
\right\} 
\end{equation*}%
is an applicable systems of words in this variety. $H$ is a two-generated
free algebra in the subvariety of $\Theta $ defined by identity $\left(
x_{1}x_{1}\right) x_{2}=0$. Algebras $H$ and $H_{W}^{\ast }$ are
automorphically equivalent by Theorem \ref{H_H_w_theorem}. But, it was
proved, that these algebras are not geometrically equivalent.
\end{example}

More examples of algebras, which are automorphically equivalent but are not
geometrically equivalent, were considered in \cite[Subsections 5.3 and 5.4]%
{TsurkovManySorted} and in \cite[Section 6]{TsurAutomEqVarLinAlg}.

Also there are many examples of varieties, which have a trivial group $%
\mathfrak{A/Y}$. In these varieties the automorphic equivalence coincides
with the geometric one. Some of these varieties were considered in \cite%
{TsurNilp} and \cite[Subsections 5.1 and 5.2]{TsurkovManySorted}.

A situation is also possible when in the some variety $\Theta $ the group $%
\mathfrak{A/Y}$ is not trivial, but the automorphic equivalence coincides
with the geometric one.

\begin{example}
\normalfont$\Theta $ is the variety of all vector spaces over fixed field $k$
with characteristic $0$. $\Omega =\left\{ 0,+,\lambda \cdot \left( \lambda
\in k\right) \right\} $. It is easy to prove that $\mathfrak{A/Y}\cong 
\mathrm{Aut}k$. In this variety%
\begin{equation*}
W_{\mathcal{\mu }}=\left\{ w_{0}=0,w_{+}\left( x_{1},x_{2}\right)
=x_{1}+x_{2},\right.
\end{equation*}%
\begin{equation*}
\left. w_{\lambda \cdot }\left( x_{1}\right) =\left( \mu \left( \lambda
\right) \right) x_{1}\left( \lambda \in k\right) \right\} ,
\end{equation*}%
where $\mu \in \mathrm{Aut}k$, is an applicable systems of words, which
corresponds to non-inner automorphism of $\Theta ^{0}$ when $\mu \neq id_{k}$%
. But, every two non-zero vector spaces are geometrically equivalent. It is
easy to prove this fact by \cite[Theorem 3]{PPT}, because all non-zero
vector spaces are direct products of some copies of one-dimensional vector
space.
\end{example}

We also have a

\begin{conjecture}
If $\Theta $ is the variety of all commutative associative linear algebras
over fixed field $k$ with characteristic $0$, then, despite the fact that $%
\mathfrak{A/Y}\cong \mathrm{Aut}k$, the automorphic equivalence coincides
with the geometric one in this variety.
\end{conjecture}

If $\Theta $ is the variety of all groups \cite{Aladova}, or is the variety
of all abelian groups, or is the variety of all nilpotent class $n$ groups
for every $n\in 
\mathbb{N}
$ \cite{TsurNilp}, then the group $\mathfrak{A/Y}$ is trivial. In 2007, B.
Plotkin raised the question: is there some subvariety of the variety of all
groups, in which the group $\mathfrak{A/Y}$ is not trivial? The variety of
all metaabelian, nilpotent class $4$ groups with exponent no more than $4$
was considered in \cite{FernandesTsur}. This variety has a signature $\Omega
=\left\{ 1,-1,\cdot ,\right\} $, where $1$ is a constant, $\cdot $ is a
multiplication and $-1$ is an unary operation which for every element of a
group gives us the inverse one. Was proved that the group $\mathfrak{A/Y}$
has two elements, because

\begin{equation*}
W=\left\{ w_{1}=1,w_{-1}\left( x_{1}\right) =x_{1}^{-1},\right.
\end{equation*}%
\begin{equation*}
\left. w_{\cdot }\left( x_{1},x_{2}\right) =x_{1}\cdot x_{2}\cdot \left(
x_{2},x_{1}\right) ^{2}\right\}
\end{equation*}%
is an applicable systems of words, which corresponds to non-inner
automorphism of $\Theta ^{0}$. But we couldn't give an example of two groups
of this variety, which are automorphically equivalent but are not
geometrically equivalent. We have a

\begin{conjecture}
In all subvarieties of the variety of all groups the automorphic equivalence
coincides with the geometric one.
\end{conjecture}

One of the interesting direction for further research is the study of gap
between automorphic equivalence and weak\textbf{\ }similarity. Now we don't
know any example of two algebras which are weak similar but are not
automorphically equivalent.

The other direction for further research is the consideration of many-sorted
algebras. We can consider in this algebras closers of equations systems of
some specific sorts and change the notion of algebraic closed sets. Will
this require changing our approach to automorphic equivalence?

\section{Acknowledgments}

I am thankful to Prof. G. Zhitomirski, whose definition of the category of $%
H $-closed congruences (see Subsection \ref{categories}) was very useful in
this research. I am also grateful to Dr. E. Aladova, Prof. E. Plotkin, and
Prof. G. Zhitomirski for numerous discussions that helped me in preparing
this article.


\begin{thebibliography}{99}
\bibitem{Aladova} E. Aladova, Method of Verbal Operations and Automorphisms
of Category of Free Algebras, \textit{Algebra and Logic}, \textbf{61:2}
(2022), pp. 87--103.

\bibitem{FernandesTsur} R. Barbosa Fernandes, A. Tsurkov, Automorphisms of
the category of finitely generated free groups of the some subvariety of the
variety of all groups. arXiv: 1909.05955V1 [math.GR].

\bibitem{Mal} A. I. Mal'cev, \textit{Algebraic Systems,} (Springer-Verlag,
Berlin, Heidelberg, New York, 1973).

\bibitem{PlotkinVarCat} B. Plotkin, Varieties of algebras and algebraic
varieties. Categories of algebraic varieties. \textit{Siberian Advanced
Mathematics, Allerton Press,} \textbf{7:2} (1997), pp. 64 -- 97.

\bibitem{PPT} B. Plotkin, E. Plotkin, A. Tsurkov, Geometrical equivalence of
groups. \textit{Communications in Algebra. }\textbf{27:8 }(1999), pp.
4015-4025.

\bibitem{PlotkinSame} B. Plotkin, Algebras with the same (algebraic)
geometry. \textit{Proceedings of the Steklov Institute of Mathematics.} 
\textbf{242} (2003), 17--207. DOI: 10.1134/S0081543812070048.

\bibitem{PlZhit} B. Plotkin, G. Zhitomirski, On automorphisms of categories
of free algebras of some varieties. J. Algebra \textbf{306}(2), 344--367
(2006).

\bibitem{TsurAutomEqAlg} A. Tsurkov, Automorphic equivalence of algebras, 
\textit{International Journal of Algebra and Computation.} \textbf{17:5/6},
(2007), pp. 1263 -- 1271.

\bibitem{TsurNilp} A. Tsurkov, Automorphisms of the category of the free
nilpotent groups of the fixed class of nilpotency. \textit{International
Journal of Algebra and Computation.} \textbf{17:5/6}, (2007), pp. 1273 --
1281.

\bibitem{Tsurkov} A. Tsurkov, Automorphic equivalence of linear algebras,%
\textit{\ Journal of Algebra and Its Applications}, \textbf{13:7}, (2014),
DOI: 10.1142/S0219498814500261.

\bibitem{TsurkovManySorted} A. Tsurkov. Automorphic equivalence of
many-sorted algebras. \textit{Applied Categorical Structures.} \textbf{24:3 }%
(2016), 209-240. DOI: 10.1007/s10485-015-9394-y.

\bibitem{TsurAutomEqVarLinAlg} A. Tsurkov, Automorphic equivalence in the
classical varieties of linear algebras, \textit{International Journal of
Algebra and Computation, }\textbf{27:8} (2017), pp. 973--999. DOI:
10.1142/S021819671750045X
\end{thebibliography}
\end{document}